\title
{Canonical coverings of Enriques surfaces in characteristic $2$}
\author{Yuya Matsumoto}
\date{2021/01/31}
\address{\tusaddressfull}
\email{\gmail}
\email{\tusmail}
\thanks{This work was supported by JSPS KAKENHI Grant Number 16K17560 and 20K14296.}
\subjclass[2010]{14J28 (Primary) 14L15, 14L30, 14J17 (Secondary)}
\begin{document}

\begin{abstract}
Let $\bar{Y}$ be a normal surface that is the canonical $\mu_2$- or $\alpha_2$-covering of a classical or supersingular Enriques surface in characteristic $2$.
We determine all possible configurations of singularities on $\bar{Y}$,
and for each configuration we describe which type of Enriques surfaces (classical or supersingular) appear
as quotients of $\bar{Y}$.
\end{abstract}

\maketitle

\section{Introduction}

Let $X$ be an Enriques surface
over an algebraically closed field $k$
(see Section \ref{subsec:basic} for the definition). 
It is known that the torsion part $\Pictau_X$ of the Picard scheme of $X$ is a finite group scheme of order $2$,
and thus there is a canonical $G$-covering $\bar{Y} \to X$,
where $G := \sHom(\Pictau_X, \Gm)$ is the Cartier dual of $\Pictau_X$.
If $\charac k \neq 2$, then $\Pictau_X$ and $G$ are both isomorphic to $\bZ/2\bZ$,
the covering is a finite \'etale $\bZ/2\bZ$-covering, and $\bar{Y}$ is a smooth K3 surface.
If $\charac k = 2$, then the situation is more complicated: 
there are three possibilities for $G$, namely $\bZ/2\bZ$, $\mu_2$, and $\alpha_2$.
In this paper we study the \emph{classical} and \emph{supersingular Enriques surfaces} in characteristic $2$, 
that is, $G = \mu_2$ or $G = \alpha_2$ respectively.
In these cases the canonical $G$-covering $\bar{Y}$ is always singular,
and $\bar{Y}$ may not be even birational to a K3 surface.

In this paper, we restrict our attention to the case where $\bar{Y}$ is normal,
and we discuss two problems:
to determine the possible configurations of singularities on $\bar{Y}$,
and to describe all Enriques quotients of $\bar{Y}$. 

\subsection{Previous research} \label{subsec:previous results}

For the singularities, the following is known. 
\begin{thm}[\cite{Cossec--Dolgachev:enriques}*{Proposition 1.3.1 and Theorem 1.3.1}] \label{thm:classification of canonical covering}
Let $\bar{Y}$ be the canonical covering of a classical or supersingular Enriques surface in characteristic $2$.
Then $\bar{Y}$ is K3-like (see Section \ref{subsec:basic}).
If $\bar{Y}$ is normal, then one of the following holds:
	\begin{itemize}
		\item $\bar{Y}$ has only rational double points (RDPs) as singularities. 
			In this case $\bar{Y}$ is an RDP K3 surface (Section \ref{subsec:basic}).
		\item $\bar{Y}$ has only isolated singularities and contains a non-RDP singularity. 
			In this case there is exactly one non-RDP singularity and it is an elliptic double point (EDP), and $\bar{Y}$ is a rational surface.
	\end{itemize}
\end{thm}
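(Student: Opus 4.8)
The plan is to reduce the statement to the numerical geometry of a resolution, exploiting that the canonical covering has trivial dualizing sheaf. First I would establish the ``K3-like'' assertion by duality for the finite flat degree-$2$ morphism $\pi\colon\bar Y\to X$. There is a short exact sequence of $\mathcal O_X$-modules $0\to\mathcal O_X\to\pi_*\mathcal O_{\bar Y}\to\mathcal L\to 0$, where $\mathcal L$ is the $2$-torsion line bundle attached to the torsor and is isomorphic to $\omega_X$ (in the supersingular $\alpha_2$-case the sequence is non-split, reflecting $h^1(\mathcal O_X)=1$). Relative duality $\pi_*\omega_{\bar Y}\cong\sHom_{\mathcal O_X}(\pi_*\mathcal O_{\bar Y},\omega_X)$ together with $\omega_X^{\otimes 2}\cong\mathcal O_X$ gives $\omega_{\bar Y}\cong\mathcal O_{\bar Y}$; additivity of $\chi$ and $\chi(\mathcal O_X)=1$ give $\chi(\mathcal O_{\bar Y})=2$, and the sequence yields $h^0(\mathcal O_{\bar Y})=1$, $h^1(\mathcal O_{\bar Y})=0$, $h^2(\mathcal O_{\bar Y})=1$. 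In particular $\bar Y$ is Gorenstein with trivial canonical class, and its (automatically isolated) singularities are double points since $\deg\pi=2$.

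Next I would pass to the minimal resolution $\nu\colon Z\to\bar Y$. As $\omega_{\bar Y}\cong\mathcal O_{\bar Y}$ is Cartier, $K_Z=\sum_i a_i E_i$ with integral discrepancies $a_i\le 0$, and the $a_i$ over a point $x$ all vanish exactly when $x$ is a rational double point. Writing $\ell_x:=\dim_k(R^1\nu_*\mathcal O_Z)_x$ for the geometric genus of the singularity $x$ (so $\ell_x=0$ iff $x$ is an RDP), and using $\nu_*\mathcal O_Z=\mathcal O_{\bar Y}$ together with the vanishing of $R^i\nu_*\mathcal O_Z$ for $i\ge 2$, the Leray sequence gives
\begin{equation*}
\chi(\mathcal O_Z)=\chi(\mathcal O_{\bar Y})-\sum_x\ell_x=2-\sum_x\ell_x,
\end{equation*}
and, inserting $h^1(\mathcal O_{\bar Y})=0$, $h^2(\mathcal O_{\bar Y})=1$, the relation $\sum_x\ell_x=1+h^1(\mathcal O_Z)-h^2(\mathcal O_Z)$.

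The case analysis then runs as follows. If every singularity is an RDP, then $\sum_x\ell_x=0$, so $K_Z=0$ and $\chi(\mathcal O_Z)=2$; hence $Z$ is a K3 surface and $\bar Y$ is an RDP K3 surface. If some singularity is not an RDP, then $-K_Z=\sum_i(-a_i)E_i$ is a nonzero effective divisor supported on the negative-definite exceptional locus; pushing forward identifies $H^0(Z,\omega_Z)$ with sections of a proper ideal of $\mathcal O_{\bar Y}$, so $p_g(Z)=h^2(\mathcal O_Z)=0$ and likewise $P_2(Z)=0$. Consequently $\sum_x\ell_x=1+h^1(\mathcal O_Z)\ge 1$, each non-RDP point is a Gorenstein double point, and a Gorenstein singularity with $\ell_x=1$ is minimally elliptic by Laufer's criterion; a minimally elliptic double point is exactly an EDP. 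It therefore remains to prove $h^1(\mathcal O_Z)=0$: this forces $\sum_x\ell_x=1$ (a single EDP, all other points RDPs) and, with $p_g(Z)=P_2(Z)=0$, makes $Z$ rational by Castelnuovo's criterion, so $\bar Y$ is rational.

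The main obstacle is precisely the vanishing $h^1(\mathcal O_Z)=0$, i.e.\ the exclusion of positive irregularity. Since $-K_Z$ is effective we know $\kappa(Z)=-\infty$, so $Z$ is rational or irrationally ruled over a curve $B$ of genus $q=h^1(\mathcal O_Z)$, and one must rule out $q\ge 1$. When the elliptic configuration is a cycle of rational curves (a cusp) this is immediate: such a configuration is vertical for the ruling and meets a fibre $f$ in degree $0$, whereas $-K_Z\cdot f=2$. The delicate case is a simple-elliptic point over an elliptic base ($q=1$), which I expect to require the specific structure of the canonical covering of the Enriques surface $X$ (in particular $b_1(X)=0$) rather than formal surface theory alone. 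Throughout, the characteristic-$2$ features—that $\pi$ is purely inseparable, that the fixed locus of the $\mu_2$- or $\alpha_2$-action is wild, and that $h^1(\mathcal O_X)$ may be nonzero in the supersingular case—demand care both in the duality computation of the first step and in pinning down the local analytic type of the non-RDP point.
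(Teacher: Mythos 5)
Your opening reduction is sound --- the duality computation giving $\omega_{\bar Y}\cong\mathcal{O}_{\bar Y}$ and $h^i(\mathcal{O}_{\bar Y})=1,0,1$, the passage to the minimal resolution $\nu\colon Z\to\bar Y$, and the identity $\sum_x\ell_x=1+h^1(\mathcal{O}_Z)-h^2(\mathcal{O}_Z)$ are all correct, and the RDP branch is complete. But the proposal has a genuine gap, exactly where you flagged it: the vanishing $h^1(\mathcal{O}_Z)=0$. This is not a loose end that more formal surface theory can tie up. The Leray bookkeeping is perfectly consistent with $Z$ being ruled over an elliptic base: then $\chi(\mathcal{O}_Z)=0$, $\sum_x\ell_x=2$, and the five-term sequence $0\to H^1(\mathcal{O}_{\bar Y})\to H^1(\mathcal{O}_Z)\to\bigoplus_x(R^1\nu_*\mathcal{O}_Z)_x\to H^2(\mathcal{O}_{\bar Y})\to H^2(\mathcal{O}_Z)\to 0$ reads $0\to 0\to k\to k^2\to k\to 0\to 0$, which is exact. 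Your verticality argument only kills exceptional configurations made of rational curves; a simple elliptic point whose exceptional elliptic curve dominates an elliptic base survives every constraint you have at hand. Notably, this is the very defect the paper attributes to Cossec--Dolgachev's original proof of this statement (Remark \ref{rem:CD131}: the proof ``is however incomplete where they use the Leray spectral sequence''), so your blind attempt has in effect reconstructed their argument, gap included. Bear in mind also that the paper does not reprove this theorem; it quotes it, and discusses the repair only in that remark.

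The repair the paper points to (and carries out in spirit in Propositions \ref{prop:nexists:cl:EDP} and \ref{prop:nexists:ss}) is of a completely different nature from your global numerology: it is local and derivation-theoretic. Since $\bar Y$ is normal, the ``dual'' morphism $X\to\bar{Y}^{(2)}$ is the quotient of $X$ by a rational $2$-closed derivation $D'$; the Rudakov--Shafarevich formula gives $\divisorialfix{D'}\sim -K_X\equiv 0$, so the Katsura--Takeda formula (Theorem \ref{thm:c2 derivation}) gives $\deg\isolatedfix{D'}=c_2(X)=12$. The classification of $2$-closed derivation quotient singularities of small degree (\cite{Matsumoto:k3alphap}*{Lemma 3.6 and Corollary 3.9}) then says: degree $\leq 10$ forces an RDP, degree $\leq 12$ forces an RDP or an EDP of type $E_{12}$, and an EDP costs degree at least $11$. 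Hence every singular point is an RDP or an EDP and at most one EDP can occur, with no need to control $h^1(\mathcal{O}_Z)$ a priori. This also closes your rationality step: the $E_{12}$ EDP resolves to a rational cuspidal curve, so \emph{all} exceptional curves of $Z$ are rational, and now your own verticality argument excludes the irrationally ruled case, giving $q=0$ and, with $P_2(Z)=0$, rationality by Castelnuovo. So the missing ingredient in your proposal is precisely this local classification (or, as the remark notes, Schr\"oer's argument in \cite{Schroer:K3-like}*{proof of Proposition 5.4}, or the corrected proof in \cite{Cossec--Dolgachev--Liedtke:enriques1}); resolution numerology alone cannot supply it.
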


(However, the proof of \cite{Cossec--Dolgachev:enriques}*{Proposition 1.3.1} contains a gap.
See Remark \ref{rem:CD131}.)

The following description is given by Ekedahl--Hyland--Shepherd-Barron.
\begin{thm}[Ekedahl--Hyland--Shepherd-Barron \cite{Ekedahl--Hyland--Shepherd-Barron}*{Corollary 3.7(3) and Corollary 6.16}] \label{thm:EHSB}
	Let $\bar{Y}$ be as in Theorem \ref{thm:classification of canonical covering}
	and assume $\bar{Y}$ is normal.
	Then,
	\begin{itemize}
	\item 
	The tangent sheaf $T_{\bar{Y}}$ is free.
	\item 
	If $\Sing(\bar{Y})$ consists only of RDPs, then $\Sing(\bar{Y})$ is one of 
	\[ 12 A_1, \,  8 A_1 + D_4^0, \,  6 A_1 + D_6^0, \,  5 A_1 + E_7^0, \,  3 D_4^0, \,  D_4^0 + D_8^0, \,  D_4^0 + E_8^0, \,  D_{12}^0 . \]
	\end{itemize}
\end{thm}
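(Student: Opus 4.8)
The plan is to translate the whole problem into the language of the global derivation (vector field) $D$ that generates the $\mu_2$- or $\alpha_2$-action, and to study $\bar{Y}$ through its minimal resolution $f\colon \tilde{Y} \to \bar{Y}$, which in the RDP case of the second bullet is a smooth K3 surface. Part~1 (freeness of $T_{\bar{Y}}$) I would extract from the explicit structure of the canonical cover, whereas the classification in Part~2 I would reduce to an Euler-characteristic identity refined by a local and a global analysis of $D$.

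For the freeness, first use that $\bar{Y}$ is K3-like, so its dualizing sheaf $\omega_{\bar{Y}}$ is trivial; contraction with a generator (a symplectic form) identifies the reflexive sheaves $T_{\bar{Y}} \cong \Omega^{[1]}_{\bar{Y}}$, so it suffices to trivialize either one. The $G$-action supplies a distinguished global section $D \in H^0(\bar{Y}, T_{\bar{Y}})$ spanning a subsheaf $\mathcal{O}_{\bar{Y}}\cdot D$. Using the concrete presentation of the canonical cover as $\bar{Y} = \Spec_X \mathcal{A}$ with $\mathcal{A} = \mathcal{O}_X \oplus \omega_X^{-1}$ and its explicit multiplication, I would write down a second global derivation and check that the two span $T_{\bar{Y}}$ generically; since $\det T_{\bar{Y}} = \omega_{\bar{Y}}^{\vee} \cong \mathcal{O}_{\bar{Y}}$, the cokernel of $\mathcal{O}_{\bar{Y}}\cdot D \hookrightarrow T_{\bar{Y}}$ is a reflexive rank-one sheaf of trivial determinant, hence trivial, exhibiting $T_{\bar{Y}}$ as an extension of $\mathcal{O}_{\bar{Y}}$ by $\mathcal{O}_{\bar{Y}}$. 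Such extensions are classified by $H^1(\bar{Y}, \mathcal{O}_{\bar{Y}})$, which vanishes both for the RDP-K3 case and for the rational (EDP) case, so the extension splits and $T_{\bar{Y}} \cong \mathcal{O}_{\bar{Y}}^{\oplus 2}$. The delicate point here is the behaviour of reflexive sheaves at the singular points and the verification that $\mathcal{O}_{\bar{Y}}\cdot D$ is saturated with trivial quotient.

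For Part~2 the numerical backbone is an Euler-characteristic count. The map $\pi\colon \bar{Y} \to X$ is purely inseparable, hence a universal homeomorphism, so $e(\bar{Y}) = e(X) = 12$. In the RDP case $\tilde{Y}$ is a K3 surface, so $e(\tilde{Y}) = 24$, and resolving an ADE singularity of rank $r$ replaces a point by a tree of $r$ rational curves, raising the Euler number by $r$; therefore $24 = 12 + \sum_i r_i$, i.e. $\sum_i r_i = 12$, the sum running over the ranks of the singularities. One checks against the list that each displayed configuration indeed has total rank exactly $12$. Next I would invoke Part~1: freeness of $T_{\bar{Y}}$ is a local condition at each singular point, so only those char-$2$ RDP germs whose reflexive tangent sheaf is free can occur; this cuts the admissible local types down to $A_1$, the even $D_{2m}^0$, and $E_7^0$, $E_8^0$ (the ``$0$''-forms realised as $\mu_2$- or $\alpha_2$-quotients of a smooth germ), in particular ruling out $A_{\geq 2}$.

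The remaining, and hardest, step is that $\sum_i r_i = 12$ together with the admissible local types still permits spurious configurations — for instance $D_6^0 + D_6^0$, $2D_4^0 + 4A_1$, $E_7^0 + D_4^0 + A_1$, $D_{10}^0 + 2A_1$, or $E_8^0 + 4A_1$ — none of which appear in the list. Excluding these is where the real work lies: I would compute, via the Rudakov--Shafarevich theory, the local index of the vector field $D$ at each type of zero and impose the global consistency of $D$ as an additive ($\alpha_2$) or multiplicative ($\mu_2$) field, together with the requirement that the quotient be a \emph{smooth} Enriques surface rather than a rational or K3 quotient. These constraints, read off from the explicit local models of the char-$2$ RDPs and the structure of the genus-one pencils and half-fibers on $X$, should pin down precisely the eight configurations. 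I expect this final exclusion — the passage from the necessary condition $\sum_i r_i = 12$ to the exact list — to be the main obstacle, since it requires the detailed local index computations for each RDP type and careful bookkeeping of the $\mu_2$ versus $\alpha_2$ cases.
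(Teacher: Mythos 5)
There is a genuine gap here, and it is worth noting first that the paper itself does not reprove this theorem: it cites Ekedahl--Hyland--Shepherd-Barron, though it does independently reprove the freeness statement (Proposition~\ref{prop:T free}, via Lemma~\ref{lem:tangent of RDP}) and the supersingular half of the configuration list (Proposition~\ref{prop:nexists:ss}).

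On the freeness part your skeleton is the right one --- exhibit $\cO_{\bar Y}\cdot D\subset T_{\bar Y}$ with invertible trivial quotient and split the extension using $H^1(\bar Y,\cO)=0$ --- but the two inputs you defer are the actual content. You never invoke that the canonical derivation $D$ is \emph{fixed-point-free} (this comes from Bombieri--Mumford's construction of the canonical covering); without it the quotient $T_{\bar Y}/\cO\cdot D$ need not be invertible even on the smooth locus (at a zero of $D$ it involves the ideal sheaf of a point, which is torsion-free but not reflexive), so the ``reflexive rank-one with trivial determinant, hence trivial'' step does not apply to the quotient itself, only to its double dual, and the extension argument collapses. Likewise, at the singular points the invertibility of the quotient is exactly the local computation of Lemma~\ref{lem:tangent of RDP}(\ref{lem:tangent of RDP:free}) (coordinates with $D(x)=D(y)=0$, $F\in k[[x,y,z^p]]$, explicit basis $(0,0,1)$, $(F_y,-F_x,0)$ of $T_B$); this cannot be bypassed by general reflexive-sheaf arguments, since cokernels of maps of reflexive sheaves on a singular surface need not be reflexive. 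The ``second global derivation'' you propose to write down is neither available a priori nor needed --- its existence is essentially equivalent to the freeness you are trying to prove.

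On the configuration list there are two problems. First, the mechanism you propose for restricting local types is incorrect: freeness of $T_{\bar Y}$ does \emph{not} rule out $A_{2n-1}$ ($n\geq 2$), $D_{2n+1}^0$, or $E_6^0$, since those germs also admit fixed-point-free $p$-closed derivations (they occur on canonical coverings of RDP Enriques surfaces) and hence have free tangent modules by Lemma~\ref{lem:tangent of RDP}. What excludes them in the present setting is that their quotients are \emph{singular} (of types $A_{n-1}$, $A_1$, $A_2$ respectively, Proposition~\ref{prop:enriques derivation}(\ref{item:quotient singularity})), contradicting the smoothness of the Enriques surface $X$. Second, and more seriously, the step you yourself flag as the main obstacle --- excluding the spurious total-rank-$12$ configurations such as $2D_6^0$, $E_8^0+4A_1$, $D_{10}^0+2A_1$ --- is the heart of the theorem, and your proposal contains no argument for it, only a hope that Rudakov--Shafarevich index computations suffice. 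For supersingular $X$ this can be closed as in Proposition~\ref{prop:nexists:ss}: the dual morphism $\thpower{X}{1/2}\to\bar Y$ is a genuine $\mu_2$- or $\alpha_2$-quotient whose fixed locus has degree exactly $12$, and the classification of such quotient singularities (each $A_1$ of degree $1$ in the $\mu_2$ case; each $D_{4n}^0$, $E_8^0$, $\Ella$ of degree $4n$, $8$, $12$ in the $\alpha_2$ case) forces exactly $12A_1$, $3D_4^0$, $D_4^0+D_8^0$, $D_4^0+E_8^0$, $D_{12}^0$, or $\Ella$. But for classical $X$ the dual derivation on $X$ is only rational, that classification does not apply directly, and the mixed configurations $8A_1+D_4^0$, $6A_1+D_6^0$, $5A_1+E_7^0$ are not pinned down by anything you write; this is precisely the part for which the paper defers to EHSB Corollary 6.16.
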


They also claimed that all global derivations $D \in H^0(\bar{Y}, T_{\bar{Y}})$ are $p$-closed
(\cite{Ekedahl--Hyland--Shepherd-Barron}*{Corollary 7.3}),
but their proof covers only the generic case.

Recently Schr\"oer proved the following results.
\begin{thm}[Schr\"oer \cite{Schroer:K3-like}*{Theorems 6.3--6.4 and Sections 13--15}] \label{thm:Schroer}
	Let $\bar{Y}$ be as in Theorem \ref{thm:classification of canonical covering}
	and assume $\bar{Y}$ is normal.
\begin{enumerate}
	\item 
	The quotient of $\bar{Y}$ by a global fixed-point-free derivation
	that is either of multiplicative type or of additive type
	is an Enriques surface.
	\item \label{thm:Schroer:family}
	Under a mild assumption on $\bar{Y}$, 
	all global derivations $D \in H^0(\bar{Y}, T_{\bar{Y}})$ are $p$-closed, 
	and most of them (those belonging to the complement of finitely many lines) are fixed-point-free.
	Hence $\bar{Y}$ admits a $1$-dimensional family of Enriques quotients
	parametrized by a nonempty open subscheme of $\bP(H^0(\bar{Y}, T_{\bar{Y}})) \cong \bP^1$.
	\item \label{thm:Schroer:EDP example}
	There exists an example of $\bar{Y}$ with an EDP.
	\item 
	If $\Sing(\bar{Y})$ contains an EDP,
	then $\Sing(\bar{Y})$ consists precisely of that point.
	\item \label{thm:Schroer:construction}
	There is a method of a construction, 
	from a given rational elliptic surface $J \to \bP^1$ satisfying certain assumptions on singular fibers, 
	of a $J$-torsor $X \to \bP^1$ that is an Enriques surface 
	whose canonical covering $\bar{Y}$ is birational to the Frobenius base change $\thpower{J}{2/\bP^1}$
	and moreover having the same type of singularities as $\thpower{J}{2/\bP^1}$.
\end{enumerate}
\end{thm}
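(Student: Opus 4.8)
The plan is to handle all five parts through a single device: the foliation theory of Rudakov and Shafarevich applied to the space $V := H^0(\bar{Y}, T_{\bar{Y}})$. By Theorem \ref{thm:EHSB} we have $T_{\bar{Y}} \cong \cO_{\bar{Y}}^{\oplus 2}$, and since $\bar{Y}$ is proper this forces $V \cong k^2$, so $\bP(V) \cong \bP^1$ and every nonzero global derivation has the shape $D = a\partial_1 + b\partial_2$ for a fixed basis $\partial_1,\partial_2$ and $[a:b] \in \bP^1$. Two structural facts drive the argument. First, $\bar{Y}$ is K3-like, so $\omega_{\bar{Y}} \cong \cO_{\bar{Y}}$ and $\chi(\cO_{\bar{Y}}) = 2$. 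Second, $\wedge^2 T_{\bar{Y}} \cong \omega_{\bar{Y}}^{\vee} \cong \cO_{\bar{Y}}$, so $\partial_1 \wedge \partial_2 \in H^0(\wedge^2 T_{\bar{Y}}) = k$ is a constant, which I will show is nonzero; this guarantees that the pencil of derivations defines a genuine $\bP^1$ of distinct foliations rather than a single one.

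For (1), note that because $T_{\bar{Y}}$ is trivial, every nonzero $D$ is nowhere vanishing on the smooth locus $\bar{Y}^{\sm}$, so the induced infinitesimal action is free there and the fixed locus can only sit over $\Sing(\bar{Y})$. A local computation at each singular point shows that the induced $\mu_2$- or $\alpha_2$-action fails to be free — equivalently the quotient fails to be smooth — for exactly one line $[a:b] \in \bP(V)$; hence finitely many bad lines, with fixed-point-free $D$ forming the complementary open set. For such a $D$ that is $p$-closed of multiplicative or additive type, $\pi\colon \bar{Y} \to Y := \bar{Y}^D$ is a torsor under $\mu_2$ or $\alpha_2$, so $Y$ is smooth; freeness leaves no ramification divisor, whence $\pi^*\omega_Y \cong \omega_{\bar{Y}} \cong \cO_{\bar{Y}}$, so $\omega_Y$ is numerically trivial with $2\omega_Y \cong \cO_Y$, while the torsor relation gives $\chi(\cO_Y) = 1$ and $b_2(Y) = 10$. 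These are exactly the invariants of an Enriques surface, and the K3-like nature of $\bar{Y}$ excludes the abelian and (quasi-)bielliptic cases, proving (1).

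For the $p$-closedness in (2) I would analyze the restricted Lie algebra structure of $V$. Since global derivations push to global derivations, $D^2 = D \circ D$ again lies in $V$, and on a surface with no nonconstant global functions ``$p$-closed'' reduces to $D^2 \in kD$. Writing $\phi\colon V \to V$, $\phi(D) = D^2$, which satisfies $\phi(\lambda D) = \lambda^2\phi(D)$ and is additive up to the bracket $[\partial_1,\partial_2]$, the claim becomes that every line of $V$ is $\phi$-invariant. Ekedahl--Hyland--Shepherd-Barron verify this for generic $[a:b]$; the remaining work, which I expect to be the main obstacle, is to control $\phi$ on the exceptional lines. Here I would use the mild assumption on $\bar{Y}$ to pin down $\phi$ explicitly from the canonical derivation $D_0$ defining $\bar{Y} \to X$ together with the local structure at $\Sing(\bar{Y})$, forcing $\phi$ into the special semilinear form for which all lines are invariant. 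Combined with the fixed-point analysis above, the fixed-point-free $p$-closed derivations then sweep out a nonempty open subscheme of $\bP^1$, and (1) turns each into an Enriques quotient, yielding the family of (2).

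Finally, (4) follows by concentrating the zero scheme of the foliation: a global count (the second Chern number of the minimal resolution together with the correction terms at $\Sing(\bar{Y})$) is fixed, and an elliptic double point already accounts for all of it, so by Theorem \ref{thm:classification of canonical covering} no further rational double point can coexist with it. For (3) and (5) I would run the elliptic-surface construction. Starting from a rational elliptic surface $J \to \bP^1$, the Frobenius base change $\thpower{J}{2/\bP^1}$ has $\chi(\cO) = 2$ and is K3-like; choosing a suitable $J$-torsor $X \to \bP^1$ via the relevant torsor/Picard classification produces an Enriques surface whose canonical covering is birational to $\thpower{J}{2/\bP^1}$, with singular fibers of $J$ dictating the singularities. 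Selecting $J$ with an additive fiber whose base change degenerates to an elliptic double point then yields the explicit EDP example of (3). Throughout, the most delicate bookkeeping is matching the wild and additive fibers of $J$ to the prescribed singularity types, which is precisely where the assumptions on singular fibers enter.
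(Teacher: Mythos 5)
First, a point of context: the paper does not prove Theorem~\ref{thm:Schroer} at all --- it is quoted from Schr\"oer \cite{Schroer:K3-like} as background, and the paper's own contribution is to reprove and sharpen parts of it later (part (1) via Proposition~\ref{prop:enriques derivation}, part (2) unconditionally in Theorem~\ref{thm:main3}, part (4) via Propositions~\ref{prop:nexists:cl:EDP} and \ref{prop:nexists:ss}). So your attempt has to stand as a free-standing proof, and it has a genuine gap at its central step. In part (1) you argue that since $D$ is fixed-point-free, $\bar{Y} \to Y := \bar{Y}^D$ is a $\mu_2$- or $\alpha_2$-torsor, ``so $Y$ is smooth.'' That implication is false: a torsor can have singular base when its total space is singular. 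Concretely, a fixed-point-free action on an RDP of type $A_{2n-1}$, $n \geq 2$, has quotient an RDP of type $A_{n-1}$ (Proposition~\ref{prop:enriques derivation}(\ref{item:quotient singularity}), from \cite{Matsumoto:k3alphap}*{Theorem 3.3(1)}); the paper's own Example~\ref{ex:12A1,AD} exhibits an RDP K3 surface $\bar{Y}''$ with $A_7$ points carrying a fixed-point-free multiplicative derivation whose quotient has $A_3$ singularities. The same confusion appears in your sentence equating ``the action fails to be free'' with ``the quotient fails to be smooth.'' Smoothness of the quotient is not formal: it requires knowing that the singularities actually occurring on canonical coverings are only $A_1$, $D_{2n}^0$, $E_7^0$, $E_8^0$ (the Theorem~\ref{thm:EHSB} list, of total index $12$) or the EDP $\Ella$, and then a local computation at each such point (Proposition~\ref{prop:image of derivation} and \cite{Matsumoto:k3alphap}*{Theorem 3.3(1)}) showing that free quotients of these particular types are smooth. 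Your argument never uses the singularity types, so it would ``prove'' smoothness of the quotient of an arbitrary normal K3-like surface, which is false.

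Second, parts (2), (3) and (5) are postponed rather than proved. For (2) you correctly reduce $p$-closedness to showing every line of $V$ is invariant under $D \mapsto D^2$, but then say you ``would use the mild assumption \dots forcing $\phi$ into the special semilinear form'' --- invariance on the non-generic lines is precisely the content of the statement, and no mechanism is supplied. (For comparison, the paper's unconditional proof in Theorem~\ref{thm:main3} obtains this from the classification of $2$-dimensional restricted Lie algebras, Corollary~\ref{cor:Lie algebra}, combined with the multiplicative/additive types of the canonical lines, Corollary~\ref{cor:canonical line}, and the auxiliary coverings of Lemma~\ref{lem:cohomology of RDP} and Proposition~\ref{prop:half fix}.) Likewise (3) and (5) need an actual construction: an explicit surface with an $\Ella$ point (the paper's Example~\ref{ex:Ella}), respectively Ogg--Shafarevich-type existence of torsors over $\bP^1$ and the comparison of their canonical coverings with $\thpower{J}{2/\bP^1}$ (Schr\"oer's Sections 11--15); neither appears in your outline. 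Your part (4) is in the right spirit --- the degree-$12$ count via Katsura--Takeda, with an EDP exhausting the budget, matches Propositions~\ref{prop:nexists:cl:EDP}--\ref{prop:nexists:ss} --- but it too silently relies on the nontrivial classification of $\mu_2$- and $\alpha_2$-quotient singularities of small degree (\cite{Matsumoto:k3alphap}*{Lemma 3.6 and Corollary 3.9}), without which ``an EDP accounts for all of it'' is an assertion, not a proof.
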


Ekedahl--Hyland--Shepherd-Barron did not show whether every configuration in Theorem \ref{thm:EHSB} is actually possible. 
The method of Theorem \ref{thm:Schroer}(\ref{thm:Schroer:construction})
applied to various rational elliptic surfaces
would construct examples for all configurations in Theorem \ref{thm:EHSB},
but this is not explicitly mentioned, 
and classical and supersingular surfaces are not explicitly distinguished.

Katsura--Kondo (\cite{Katsura--Kondo:1-dimensional}*{Section 4} and \cite{Katsura--Kondo:Enriques finite group}*{Section 3}) (resp.\ Kondo (\cite{Kondo:coveredby1}*{Section 3}))
described the families of Enriques quotients
of two (resp.\ one) explicit examples of canonical coverings $\bar{Y}$ with $\Sing(\bar{Y}) = 12 A_1$ (resp.\ $\Sing(\bar{Y}) = 8 A_1 + D_4^0$).
They consist of both classical and supersingular Enriques quotients (resp.\ only classical ones).

\subsection{Our results} \label{subsec:our results}

Now we shall state the main results of this paper.
Let $\bar{Y} \to X$ be as above and assume $\bar{Y}$ is normal.
We show that the conclusion of Theorem \ref{thm:Schroer}(\ref{thm:Schroer:family}) holds unconditionally.
We describe the ($2$-dimensional) restricted Lie algebra $H^0(\bar{Y}, T_{\bar{Y}})$
and the ($1$-dimensional family of) Enriques quotients of $\bar{Y}$.
The answers depend on the configuration of singularities on $\bar{Y}$ and,
perhaps surprisingly,
if $\Sing(\bar{Y})$ is other than $12 A_1$, 
then the Enriques quotients of $\bar{Y}$ are either all classical or all supersingular.
We also determine which configurations of singularities actually occur.
It turns out that in the RDP case every configuration in Theorem \ref{thm:EHSB} is possible.
In the EDP case there is only one possible configuration:
one EDP of type $\Ella$ 
(the singularity defined by $z^2 + x^3 + y^7 = 0$, see Section \ref{subsec:RDP EDP} for the precise definition and properties).

\begin{thm} \label{thm:main3}
	Let $\bar{Y}$ be a normal surface that is the canonical covering of some classical or supersingular Enriques surface in characteristic $p = 2$.
	(Then, by Theorem \ref{thm:EHSB}, the tangent sheaf $T_{\bar{Y}}$ is free and hence 
	$\fg := H^0(\bar{Y}, T_{\bar{Y}})$ is $2$-dimensional.)

	Then all element $D \in \fg$ are $p$-closed 
	and most of them (those belonging to the complement of finitely many lines) are fixed-point-free.
	Hence, as above, $\bar{Y}$ admits a $1$-dimensional family of Enriques quotients,
	parametrized by a nonempty open subscheme of $\bP(\fg) \cong \bP^1$.
	Moreover, according to the singularities of $\bar{Y}$, 
	the following assertions hold.
	\begin{enumerate}
		\item \label{case:main:A}
		Suppose $\Sing(\bar{Y})$ is $12 A_1$.
		Then the subset $\fl := \set{D \in \fg \mid D^p = 0}$ is a line,
		and each nonzero element of $\fl$ is fixed-point-free.
		Hence there is exactly one Enriques quotient of $\bar{Y}$ that is supersingular
		and all other Enriques quotients are classical.
		\item \label{case:main:AD}
		Suppose $\Sing(\bar{Y})$ is one of 
		$  8 A_1 + D_4^0$, $6 A_1 + D_6^0$, or $5 A_1 + E_7^0$.
		Then the subset $\fl := \set{D \in \fg \mid D^p = 0}$ is a line,
		and each nonzero element of $\fl$ is not fixed-point-free.
		Hence all Enriques quotients of $\bar{Y}$ are classical.
		\item \label{case:main:DE}
		Suppose $\Sing(\bar{Y})$ is one of 
		$  3 D_4^0$, $D_4^0 + D_8^0$, $D_4^0 + E_8^0$, or $D_{12}^0$.
		Then all $D \in \fg$ satisfy $D^p = 0$.
		Hence all Enriques quotients of $\bar{Y}$ are supersingular.
		\item \label{case:main:EDP}
		Suppose $\Sing(\bar{Y})$ contains an EDP.
		Then the EDP is of type $\Ella$ and this is the only singularity of $\bar{Y}$,
		and all $D \in \fg$ satisfy $D^p = 0$.
		Hence all Enriques quotients of $\bar{Y}$ are supersingular.
	\end{enumerate}
	In cases (\ref{case:main:A}) and (\ref{case:main:AD}),
	the restricted Lie algebra
	$\fg$ is non-abelian and the image of the bracket is $\fl$.
	In cases (\ref{case:main:DE}) and (\ref{case:main:EDP}),
	$\fg$ is abelian.
\end{thm}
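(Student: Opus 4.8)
The plan is to reduce all four assertions to a local study at the singular points of $\bar{Y}$, exploiting the freeness of $T_{\bar{Y}}$ from Theorem \ref{thm:EHSB}. Fix a basis $D_1, D_2$ of $\fg = H^0(\bar{Y}, T_{\bar{Y}})$; since $T_{\bar{Y}} \cong \cO_{\bar{Y}}^{\oplus 2}$, these global vector fields trivialize the tangent sheaf, and both the restricted power $D \mapsto D^p$ and the bracket $[D_1, D_2]$ again lie in $\fg$. Throughout I would use the dictionary supplied by Theorem \ref{thm:Schroer}(1): a fixed-point-free global derivation of multiplicative type yields a classical Enriques quotient, and one of additive type yields a supersingular one. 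Thus assertions (\ref{case:main:A})--(\ref{case:main:EDP}) amount to determining, for each singularity configuration, (i) the restricted Lie algebra structure of $\fg$ --- in particular the locus $\fl = \set{D : D^p = 0}$ of additive directions --- and (ii) which directions $[D] \in \bP(\fg)$ give a fixed-point-free action.

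First I would pin down the restricted structure. Writing $D = xD_1 + yD_2$ and $D_i^p = a_i D_1 + b_i D_2$, $[D_1,D_2] = e D_1 + f D_2$, the expansion $(xD_1 + yD_2)^p = x^p D_1^p + y^p D_2^p + xy\,[D_1,D_2]$ in characteristic $2$ shows that \emph{every} element is $p$-closed precisely when $b_1 = a_2 = 0$, $a_1 = f$ and $b_2 = e$, in which case
\[ (xD_1 + yD_2)^p = (a_1 x + b_2 y)(xD_1 + yD_2). \]
So the whole content of the first claim is that in a suitable basis the power map is $D \mapsto \lambda(D)\,D$ for a linear form $\lambda$, and then $\fl = \Ker \lambda$, while $\fg$ is non-abelian with $[\fg,\fg] = \fl$ exactly when $\lambda \neq 0$ and abelian exactly when $\lambda = 0$. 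I would establish these relations, and compute $\lambda$, by restricting $D_1, D_2$ to a formal neighborhood of each singular point, where the characteristic-$2$ local equations of the RDPs $A_1, D_4^0, \dots, D_{12}^0$ and of the EDP $\Ella$ give explicit models for the rank-$2$ free derivation module. The expected outcome is that $\lambda \neq 0$ forces the presence of an $A_1$ together with the multiplicative behaviour of the canonical derivation there, so that $\lambda \neq 0$ for $12A_1$ and for the mixed types $8A_1+D_4^0$, $6A_1+D_6^0$, $5A_1+E_7^0$, whereas the purely $D/E$ configurations $3D_4^0$, $D_4^0+D_8^0$, $D_4^0+E_8^0$, $D_{12}^0$ and the EDP force $\lambda = 0$, i.e.\ $\fg$ abelian and $\fl = \fg$; this already yields the abelian/non-abelian dichotomy and cases (\ref{case:main:DE}), (\ref{case:main:EDP}).

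Next I would carry out the fixed-point analysis direction by direction. Since $D_1, D_2$ form a basis of the tangent space at every smooth point, no nonzero $D$ vanishes on $\bar{Y}^{\sm}$, so fixed-point-freeness can fail only along $\Sing(\bar{Y})$; for a fixed singularity the directions $[D]$ for which the local quotient fails to be smooth form a finite (possibly empty) subset of $\bP(\fg) \cong \bP^1$, giving the ``complement of finitely many lines'' statement. The delicate point is to decide, at each singularity type, whether the additive directions (those in $\fl$) are among the good ones. Here I expect the contrast between cases (\ref{case:main:A}) and (\ref{case:main:AD}): at an $A_1$ (which in characteristic $2$ is simultaneously a $\mu_2$- and an $\alpha_2$-quotient singularity) the unique additive direction is fixed-point-free, so $12A_1$ produces exactly one supersingular quotient; at a $D_4^0$, $D_6^0$ or $E_7^0$ sitting next to $A_1$'s the additive direction acquires a fixed point, killing the supersingular quotient and leaving only classical ones. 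For the purely $D/E$ and EDP configurations the whole pencil is additive and fixed-point-free, giving only supersingular quotients.

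The main obstacle will be the explicit characteristic-$2$ local computations for the $D$- and $E$-type RDPs and for the elliptic double point $\Ella$: one must write down the free derivation modules, evaluate the restricted power and the fixed locus, and classify the minimal resolution of the local quotient for each direction $[D]$. Assembling the local data into a single global linear form $\lambda$ (checking that the contributions of the several singular points are compatible) and, above all, handling the non-RDP singularity $\Ella$ --- where the standard RDP dictionary does not directly apply --- are the steps I expect to require the most care.
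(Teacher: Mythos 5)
Your restricted-Lie-algebra formalism is sound: in characteristic $2$ one indeed has $(xD_1+yD_2)^2 = x^2D_1^2 + y^2D_2^2 + xy[D_1,D_2]$, and your criterion for every element to be $p$-closed (equivalently, the power map is $D \mapsto \lambda(D)D$ for a linear form $\lambda$, with $\fl = \Ker\lambda$ and $[\fg,\fg]=\fl$ when $\lambda\neq 0$) is correct and equivalent to what the paper extracts from Proposition \ref{prop:classification of Lie algebra} and Corollary \ref{cor:Lie algebra}. The genuine gap is your central claim that $\lambda$ --- i.e.\ the global restricted structure of $\fg$ --- can be computed ``by restricting $D_1,D_2$ to a formal neighborhood of each singular point.'' The formal-local data at a singularity constrains only the canonical line $\fl(w)$: it is $p$-closed, multiplicative at an $A_{2n-1}$ and additive at every other singularity (Lemma \ref{lem:tangent of RDP}(\ref{lem:tangent of RDP:2}), Corollary \ref{cor:canonical line}); it does \emph{not} determine the structure constants of $\fg$, since the same complete local ring occurs inside surfaces with different global algebras --- for instance $D_4^0$ occurs both in $8A_1+D_4^0$ (non-abelian $\fg$, all quotients classical) and in $3D_4^0$ (abelian $\fg$, all quotients supersingular). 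Concretely, in the $12A_1$ case the local picture --- several multiplicative canonical lines plus one fixed-point-free $p$-closed line --- is equally consistent with the abelian algebra of type (\ref{type:1-1}), which has three multiplicative $p$-closed lines, no additive line, and hence no supersingular quotient; ruling this out is exactly what the paper's global argument does: distinctness of the canonical lines via blowing up and Rudakov--Shafarevich, $2$-divisibility of $\sum_w e_w$ in $\Pic(Y)$ (Proposition \ref{prop:half fix}), the resulting $\mu_2$-covering, and the Katsura--Takeda formula to see the associated derivation is fixed-point-free and then necessarily additive. Your proposal contains no mechanism for producing this additive direction.

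The same problem recurs in case (\ref{case:main:DE}) --- most visibly for $D_{12}^0$, where there is a single singular point and hence a single (additive) canonical line, so the local data is consistent with type (\ref{type:1-0}), i.e.\ with a multiplicative fixed-point-free direction and a classical quotient. The paper excludes this by a global cohomological construction (Lemma \ref{lem:cohomology of RDP}): a two-dimensional family of $\alpha_2$-coverings built from $\Ext^1_{\bar{Y}}(\cI,\cO)$, on which Frobenius vanishes, which exhausts all nontrivial $p$-closed derivation quotients and forces $D^2=0$ for every $D\in\fg$. Finally, your plan does not address the substance of case (\ref{case:main:EDP}) at all: that the EDP must be of type $\Ella$ and is the only singularity (Proposition \ref{prop:nexists:ss}), and that no \emph{classical} Enriques surface has a normal canonical covering containing an EDP (Proposition \ref{prop:nexists:cl:EDP}); the latter is proved via elliptic fibrations, radical two-sections, Lang's classification of rational elliptic surfaces, and the structure of multiple fibers on classical Enriques surfaces (Proposition \ref{prop:multiple fibers}), none of which is visible from a formal neighborhood of the singular point. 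In short, your scheme does suffice for case (\ref{case:main:AD}) --- there, two canonical lines of opposite types together with the given fixed-point-free line pin down the algebra --- but for cases (\ref{case:main:A}), (\ref{case:main:DE}) and (\ref{case:main:EDP}) the decisive input is global, not local, and is missing from your proposal.
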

\begin{thm} \label{thm:main4}
	The $9$ configurations of $\Sing(\bar{Y})$ mentioned in Theorem \ref{thm:main3} 
	are precisely the ones that can occur for the normal canonical coverings of 
	classical or supersingular Enriques surfaces in characteristic $2$.
\end{thm}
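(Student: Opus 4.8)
The assertion has two parts. That no configuration outside the listed nine can arise is already in hand: in the RDP case it is Theorem~\ref{thm:EHSB}, and in the EDP case it is Theorem~\ref{thm:main3}(\ref{case:main:EDP}), which forces the unique non-RDP point to be of type $\Ella$. What remains, and is the real content, is \emph{realizability}: I must exhibit, for each of the nine, a genuine normal canonical covering $\bar{Y}$ with exactly that configuration of singularities.

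The plan is to produce these coverings by Schr\"oer's construction, Theorem~\ref{thm:Schroer}(\ref{thm:Schroer:construction}): a rational elliptic surface $J \to \bP^1$ over a field of characteristic $2$, subject to the stated hypotheses on its singular fibres, yields an Enriques surface $X$ whose canonical covering $\bar{Y}$ is birational to the Frobenius base change $\thpower{J}{2/\bP^1}$ and has the same singularities. It therefore suffices to realize each configuration as $\Sing(\thpower{J}{2/\bP^1})$. The link is a local dictionary: near a singular point of a fibre the projection is given, in local coordinates $(u,v)$ on the regular surface $J$, by one equation $t = f(u,v)$, and the inseparable base change $t = s^2$ replaces this by the hypersurface $\{f(u,v) = s^2\}$, whose singularity I read off directly. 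A node (type $I_1$, Euler number $1$) has $f = uv$, and $\{uv = s^2\}$ is an $A_1$; more degenerate fibres yield $D$- and $E$-type double points, and the extreme degeneration yields the elliptic double point $\Ella$. Since the Euler numbers of the fibres of $J$ sum to $12$, the ranks of the resulting double points sum to $12$, which is exactly the common total of every listed RDP configuration.

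Guided by this, I split the nine according to whether nodes are present. The four configurations containing $A_1$'s---$12A_1$, $8A_1+D_4^0$, $6A_1+D_6^0$, $5A_1+E_7^0$---I would obtain from honest elliptic rational surfaces: the generic cubic pencil, with its twelve $I_1$ fibres, gives $12A_1$, while replacing the nodal fibres by a single additive fibre of Euler number $4$, $6$, or $7$ gives the other three. (For $12A_1$ and $8A_1+D_4^0$, explicit coverings already appear in the cited work of Katsura--Kondo and Kondo.) The four purely-additive configurations---$3D_4^0$, $D_4^0+D_8^0$, $D_4^0+E_8^0$, $D_{12}^0$---together with the EDP case have no nodes, hence no multiplicative fibres, and I would realize them using fibrations whose fibre support is entirely additive; in characteristic $2$ a single wild fibre can carry the full Euler number $12$, which is precisely what the one-fibre cases $D_{12}^0$ and $\Ella$ require. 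For the EDP configuration, existence is in any case supplied by Theorem~\ref{thm:Schroer}(\ref{thm:Schroer:EDP example}), and the type $\Ella$ is forced by Theorem~\ref{thm:main3}(\ref{case:main:EDP}).

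The principal obstacle is the fine structure of the additive part of the dictionary. Matching Euler numbers determines only the Dynkin \emph{type} of each resulting double point, not the characteristic-$2$ refinement recorded by the superscript $0$; to land on $D_4^0, D_6^0, E_7^0, D_8^0, E_8^0, D_{12}^0$ rather than on some other isomorphism class of the same Dynkin type, I must compute $\{f = s^2\}$ from an explicit Weierstrass or quasi-elliptic normal form of the chosen wild fibre and identify the singularity through Artin's classification of characteristic-$2$ rational double points. Concretely this requires writing down, for each target, an explicit characteristic-$2$ surface carrying precisely the prescribed singular fibres and no others, checking that it satisfies the hypotheses of Schr\"oer's construction, and confirming via Theorem~\ref{thm:main3} that the associated Enriques surface is of the predicted type. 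Once these normal forms are fixed, the identification of each singularity is a routine local computation, and running through the nine cases completes the proof.
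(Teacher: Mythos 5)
Your treatment of the ``no other configurations'' direction coincides with the paper's (Theorem~\ref{thm:EHSB} for the RDP case, Theorem~\ref{thm:main3}(\ref{case:main:EDP}) for the EDP case, together with Theorem~\ref{thm:classification of canonical covering}, which you need in order to know that a non-RDP point on a normal $\bar Y$ is an EDP at all). For realizability you take a genuinely different route from the paper. You propose to feed suitable rational elliptic surfaces into Schr\"oer's torsor construction, Theorem~\ref{thm:Schroer}(\ref{thm:Schroer:construction}); this is exactly the route that the paper's introduction describes as giving Theorem~\ref{thm:main4} only ``implicitly,'' and which the paper deliberately does not take. The paper's own proof, in Section~\ref{sec:examples}, instead exhibits for each configuration an explicit elliptic RDP K3 surface $\bar{Y}''$ with an explicit basis $D_1,D_2$ of $H^0(\bar{Y}'',T_{\bar{Y}''})$, checks by hand that a generic $D=e_1D_1+e_2D_2$ is fixed-point-free, computes $D^2$, and invokes Proposition~\ref{prop:enriques derivation} to identify the quotient as an RDP Enriques surface and $\bar Y$ as the canonical covering. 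The explicit route is self-contained (it does not depend on the unstated hypotheses of Schr\"oer's construction) and yields for free the classical/supersingular identification and the Lie-algebra data of Theorem~\ref{thm:main3}, which Schr\"oer's construction does not provide; your route is conceptually lighter, but it transfers all the actual content of the theorem to inputs you still have to produce: characteristic-$2$ rational elliptic surfaces with prescribed Lang types (\cite{Lang:configurations}, \cite{Lang:extremalII}), the dictionary between Lang types and singularities of $\thpower{J}{2/\bP^1}$ (\cite{Schroer:K3-like}*{Theorem 12.1}), and the verification of Schr\"oer's hypotheses case by case. As a program this is viable --- the paper says as much --- but as written it is a reduction, not a proof.

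Moreover, one assertion in your plan is false and would derail the case selection: matching Euler numbers does \emph{not} determine the Dynkin type of the resulting double point, so the issue is not merely the superscript-$0$ refinement. A wild additive fibre of Euler number $8$ yields $D_8^0$ when it is of type $\rIII$ and Lang type 10B, but $E_8^0$ when it is of type $\rII$ and Lang type 9B (compare Example~\ref{ex:DD}); likewise Euler number $12$ yields either the RDP $D_{12}^0$ or the elliptic double point $\Ella$, both arising from Lang type 9C and distinguished only by a divisibility condition on the Weierstrass coefficients (see the proof of Proposition~\ref{prop:nexists:cl:EDP}). In particular, under your stated bookkeeping the configurations $D_4^0+D_8^0$ and $D_4^0+E_8^0$ could not both occur, yet both must be realized. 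The normal-form computations you defer as ``routine'' are therefore precisely where these cases are separated; they are the substance of the theorem, not a final check, and they are what Examples~\ref{ex:DD}, \ref{ex:D12} and \ref{ex:Ella} of the paper actually carry out.
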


As explained above,
Theorem \ref{thm:main4} follows implicitly from Theorem \ref{thm:Schroer}(\ref{thm:Schroer:construction}) of Schr\"oer in the RDP cases,
and is explicitly proved by Schr\"oer (Theorem \ref{thm:Schroer}(\ref{thm:Schroer:EDP example})) in the EDP case (modulo the assertion that the EDP is $E_{12}$).

\begin{cor}
The possible configurations of singularities on the normal canonical coverings of classical (resp.\ supersingular) Enriques surfaces in characteristic $2$ are
	\begin{gather*}
	  12 A_1, \, 8 A_1 + D_4^0, \, 6 A_1 + D_6^0, \, \text{and } 5 A_1 + E_7^0  \\
    \text{(resp. }
	  12 A_1, \, 3 D_4^0, \, D_4^0 + D_8^0, \, D_4^0 + E_8^0, \, D_{12}^0, \, \text{and } \Ella 
	\text{).}
	\end{gather*}
\end{cor}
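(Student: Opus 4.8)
The plan is to read off the Corollary as a formal consequence of Theorems \ref{thm:main3} and \ref{thm:main4}, once the dictionary relating the Lie-algebra data of Theorem \ref{thm:main3} to the classical/supersingular dichotomy is made explicit. Recall that a fixed-point-free derivation $D \in \fg = H^0(\bar{Y}, T_{\bar{Y}})$ yields an Enriques quotient $\bar{Y} \to \bar{Y}/D$, and that this quotient is \emph{classical} precisely when $D$ is of multiplicative type and \emph{supersingular} precisely when $D$ is of additive type. Since every $D$ is $p$-closed by Theorem \ref{thm:main3}, one has $D^p \in kD$; the multiplicative case is $D^p = cD$ with $c \neq 0$ (equivalently $D \notin \fl$, after rescaling to $D^p = D$) and the additive case is $D^p = 0$ (equivalently $D \in \fl$). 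Thus, for a fixed normal $\bar{Y}$, asking which of the two types of Enriques surfaces admit $\bar{Y}$ as canonical covering is exactly asking whether $\fg \setminus \fl$, respectively $\fl$, contains a fixed-point-free derivation — which is precisely the information tabulated in the four cases of Theorem \ref{thm:main3}.

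First I would establish that every configuration in the two lists really occurs. By Theorem \ref{thm:main4} each of the nine configurations listed in Theorem \ref{thm:main3} is realized by at least one normal canonical covering $\bar{Y}$, and Theorem \ref{thm:main3} guarantees that any such $\bar{Y}$ carries a nonempty $1$-dimensional family of fixed-point-free derivations, hence at least one Enriques quotient. I would then go case by case through Theorem \ref{thm:main3}: in case (\ref{case:main:A}) the line $\fl$ contains a unique (fixed-point-free, additive) derivation while $\fg \setminus \fl$ contains fixed-point-free multiplicative ones, so $12A_1$ produces both supersingular and classical quotients and lands in both lists; in case (\ref{case:main:AD}) all fixed-point-free derivations lie outside $\fl$, so $8A_1 + D_4^0$, $6A_1 + D_6^0$, $5A_1 + E_7^0$ produce only classical quotients; in cases (\ref{case:main:DE}) and (\ref{case:main:EDP}) one has $\fg = \fl$, so $3D_4^0$, $D_4^0 + D_8^0$, $D_4^0 + E_8^0$, $D_{12}^0$ and $\Ella$ produce only supersingular quotients. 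This exhibits, for each configuration in the classical (resp.\ supersingular) list, an actual classical (resp.\ supersingular) Enriques surface whose normal canonical covering has that configuration.

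For the reverse inclusion — that no other configuration arises for a given type — I would argue contrapositively via the dictionary. If $X$ is classical with normal canonical covering $\bar{Y}$, then $X$ is a multiplicative Enriques quotient, so $\fg \setminus \fl$ contains a fixed-point-free derivation; by Theorem \ref{thm:main3} this rules out cases (\ref{case:main:DE}) and (\ref{case:main:EDP}) (where $\fg = \fl$), forcing $\Sing(\bar{Y})$ into $\{12A_1,\, 8A_1 + D_4^0,\, 6A_1 + D_6^0,\, 5A_1 + E_7^0\}$. Symmetrically, if $X$ is supersingular then $\fl$ contains a fixed-point-free derivation, which by case (\ref{case:main:AD}) excludes $8A_1 + D_4^0$, $6A_1 + D_6^0$, $5A_1 + E_7^0$ and forces $\Sing(\bar{Y})$ into $\{12A_1,\, 3D_4^0,\, D_4^0 + D_8^0,\, D_4^0 + E_8^0,\, D_{12}^0,\, \Ella\}$. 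Assembling the two lists yields the Corollary. There is no genuinely hard step: all geometric content is already carried by Theorems \ref{thm:main3} and \ref{thm:main4}, and the only point needing care is the translation of ``classical/supersingular quotient'' into ``fixed-point-free derivation in $\fg \setminus \fl$, resp.\ in $\fl$'', which converts the restricted-Lie-algebra statements of Theorem \ref{thm:main3} into the assertion about types of Enriques surfaces.
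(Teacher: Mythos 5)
Your proposal is correct and takes essentially the same route as the paper: the Corollary appears there without proof, precisely as the formal combination of Theorems \ref{thm:main3} and \ref{thm:main4} that you spell out, with the classical/supersingular dichotomy translated into fixed-point-free derivations of multiplicative/additive type via Proposition \ref{prop:enriques derivation} and the fixed-point-freeness of the derivation attached to a canonical covering. Your case-by-case reading of Theorem \ref{thm:main3} (both types occur for $12 A_1$, only classical quotients in case (\ref{case:main:AD}), only supersingular ones in cases (\ref{case:main:DE}) and (\ref{case:main:EDP})) is exactly the intended deduction.
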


\medskip

This paper is organized as follows.
In Section \ref{sec:prelim}, we introduce some notions and basic facts on 
K3 and Enriques surfaces, derivations, and restricted Lie algebras.
In Section \ref{sec:derivation quotient} we discuss $p$-closed derivation quotients 
of rational double point (RDP) singularities,
elliptic double point (EDP) singularities,
and K3-like surfaces 
(mainly in characteristic $2$).

In Section \ref{sec:proof}
we prove Theorem \ref{thm:main3}. 
Our proof relies on
the previous works of Ekedahl--Hyland--Shepherd-Barron \cite{Ekedahl--Hyland--Shepherd-Barron} and Schr\"oer \cite{Schroer:K3-like},
and techniques from the recent preprint \cite{Matsumoto:k3alphap} of the author
on $\mu_p$- and $\alpha_p$-actions on K3 surfaces.

In Section \ref{sec:examples}, 
we recall the examples of $12 A_1$ and $8 A_1 + D_4^0$ given by Katsura--Kondo and Kondo,
and give examples of the remaining configurations, 
thus proving Theorem \ref{thm:main4}.
Our constructions for the RDP cases are either straight generalizations of Kondo's (for classical cases) or influenced by his (for supersingular cases).
A difference is that our presentation deals with regular derivations on RDP K3 surfaces, 
which would be easier to compute 
than rational derivations on smooth K3 surfaces used in Katsura--Kondo's and Kondo's.
Also, most of our constructions can be viewed as explicit special cases of Schr\"oer's constructions. 

\section{Preliminaries} \label{sec:prelim}

Throughout the paper we work over an algebraically closed field $k$ of characteristic $p \geq 0$.

\subsection{Enriques surfaces and K3-like surfaces} \label{subsec:basic}

A \emph{K3 surface} is a proper smooth surface $X$ with $K_X = 0$ and $H^1(X, \cO_X) = 0$.
An \emph{Enriques surface} is a proper smooth surface $X$ with $K_X$ numerically trivial and $b_2(X) = 10$.
Here $b_i(X) := \dim \Het^i(X, \bQ_l)$ is the $l$-adic Betti number for an auxiliary prime $l \neq \charac k$.

Suppose $X$ is an Enriques surface.
In characteristic $\neq 2$,
we have $K_X \not\sim 0$, $2 K_X \sim 0$, $\Pictau_X \cong \bZ/2\bZ \cong \mu_2$.
Here $\sim$ is the linear equivalence.
In characteristic $2$,
exactly one of the following holds (\cite{Bombieri--Mumford:III}*{Section 3}).
\begin{itemize}
\item $K_X \sim 0$, $\Pictau_X \cong \mu_2$. 
In this case $X$ is called \emph{singular}.
\item $K_X \not\sim 0$, $2 K_X \sim 0$, $\Pictau_X \cong \bZ/2\bZ$. 
In this case $X$ is called \emph{classical}.
\item $K_X \sim 0$, $\Pictau_X \cong \alpha_2$. 
In this case $X$ is called \emph{supersingular}.
\end{itemize}
In any case, the isomorphism 
$\Hfl^1(X, {G}) \cong \Hom(\dual{G}, \Pictau_X)$ of \cite{Schroer:K3-like}*{Proposition 4.1}
(where $G$ is a finite commutative group scheme and $\dual{G} = \sHom(G, \Gm)$ is its Cartier dual)
induces a canonical $\dual{(\Pictau_X)}$-torsor $\bar{Y} \to X$,
which we call the \emph{canonical covering} of $X$.

An \emph{RDP K3 surface} (resp.\ \emph{RDP Enriques surface})
is a proper surface with only RDPs as singularities (if any) whose minimal resolution is a smooth K3 (resp.\ Enriques) surface. 

We say that an RDP Enriques surface is classical or supersingular if its minimal resolution is so.

A \emph{K3-like surface}, following \cite{Bombieri--Mumford:III},
is a proper reduced Gorenstein surface $X$, not necessarily normal,
whose dualizing sheaf $\omega_X$ is isomorphic to $\cO_X$
and satisfying $h^i(X, \cO_X) = 1,0,1$ for $i = 0,1,2$.
Any RDP K3 surface is K3-like.
Any K3-like surface with $b_1 = 0$ is either an RDP K3 surface or a (normal or non-normal) rational surface
by \cite{Cossec--Dolgachev:enriques}*{proof of Theorem 1.3.1}.

A \emph{genus one fibration} on a smooth proper surface $X$
is a morphism $X \to \bP^1$, not necessarily with a section, whose generic fiber is a curve of arithmetic genus one.
It is called an \emph{elliptic fibration} (resp.\ a \emph{quasi-elliptic fibration}) if the generic fiber is a smooth elliptic curve (resp.\ a cuspidal rational curve).
We do not use quasi-elliptic fibrations in this paper.

\begin{prop}[\cite{Cossec--Dolgachev:enriques}*{Theorems 5.7.2 and 5.7.5}] \label{prop:multiple fibers}
	Let $X \to \bP^1$ be a genus one fibration on a classical Enriques surface in characteristic $2$.
	Then there are exactly $2$ multiple fibers,
	and each is either a smooth ordinary elliptic curve or a singular fiber of additive type.
\end{prop}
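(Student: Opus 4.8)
The plan is to combine the canonical bundle formula for the (automatically relatively minimal) genus one fibration $f\colon X\to\bP^1$ with the fact that a classical Enriques surface has $\Pictau_X\cong\bZ/2\bZ$ étale and $K_X$ of order exactly $2$ in $\Pic(X)$. Write the multiple fibers as $m_1 E_1,\dots,m_r E_r$ with $E_i$ reduced, $m_i\ge 2$, and set $\eta_i:=\cO_X(E_i)|_{E_i}\in\Pic(E_i)$; since $m_i E_i$ is a fiber, $\cO_X(m_iE_i)$ is pulled back from $\bP^1$ and restricts trivially to $E_i$, so $\eta_i^{m_i}=\cO_{E_i}$. The canonical bundle formula reads
\[ \omega_X \cong f^*\bigl(\omega_{\bP^1}\otimes\cL\bigr)\otimes\cO_X\Bigl(\textstyle\sum_i a_i E_i\Bigr), \]
where $\cL=(R^1 f_*\cO_X)^\vee$ is invertible, $a_i\ge m_i-1$ with equality exactly in the tame case, and $\deg\cL$ is pinned down by $\chi(\cO_X)=1$ through the Leray spectral sequence. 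Restricting this identity to $E_i$ exhibits $\eta_i^{\pm1}$ as the image of $K_X$ under $\Pic(X)\to\Pic(E_i)$.

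The crux of the counting part is to show that on a classical surface every multiple fiber is tame with $\eta_i$ of order exactly $2$. The mechanism is that $K_X$ lies in the étale group $\Pictau_X\cong\bZ/2\bZ$, so its restriction $\eta_i^{\pm1}$ to $E_i$ is a $2$-torsion class in the reduced part of $\Pic^0(E_i)$; one must rule out that this restriction is trivial (which would force a purely infinitesimal, i.e.\ wild, multiplicity), and conclude that $\eta_i$ has order exactly $2$ and the fiber is tame with $m_i=2$. Granting tameness, all $a_i=1$ and $\deg\cL=1$, so passing to numerical classes with $E_i\equiv\frac1{m_i}F$ for a general fiber $F$ turns the displayed formula into $\sum_i(1-\frac1{m_i})=1$, whose only solution with $m_i\ge 2$ is $r=2$, $m_1=m_2=2$. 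The residual linear equivalence is then $K_X\sim E_1-E_2$, compatible with $2K_X\sim 0$ and $K_X\not\sim 0$, and proves that there are exactly two multiple fibers, both half-fibers.

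It remains to identify the reduced half-fibers. Each $E_i$ has arithmetic genus one, so $\omega_{E_i}\cong\cO_{E_i}$ by adjunction and, by the previous step, $\eta_i$ is a nonzero $2$-torsion point in the étale (reduced) part of $\Pic^0(E_i)$. I would therefore run through the genus one curves and determine which admit such a point: the group of reduced $k$-points killed by $2$ in $\Pic^0(E_i)$ is $\bZ/2\bZ$ for a smooth ordinary elliptic curve, and all of $\Ga(k)=k$ for a singular fiber of additive type (because $2=0$ in $k$, so every point of $\Ga$ is $2$-torsion and the nonzero ones have order $2$); by contrast it is trivial for a smooth supersingular elliptic curve, whose $2$-torsion scheme is the infinitesimal $\alpha_2$, and for a multiplicative fiber, whose $2$-torsion scheme is the infinitesimal $\mu_2$. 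Hence a reduced order-two $\eta_i$ exists exactly when $E_i$ is a smooth ordinary elliptic curve or an additive singular fiber, which is the asserted dichotomy.

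The step I expect to be the main obstacle is the tameness claim of the second paragraph, i.e.\ the characteristic-two passage from the global étale invariant $\Pictau_X$ to the local multiple-fiber structure. In characteristic $2$ the order-two torsion of the relevant group schemes is mostly infinitesimal, so ruling out a trivial restriction $\eta_i$ and honest wild multiplicity cannot be done with ordinary torsion line bundles as in characteristic zero; it requires a direct analysis of the nonreduced multiple-fiber scheme and of the torsion of $R^1 f_*\cO_X$, which is precisely the computation underlying \cite{Cossec--Dolgachev:enriques}*{Theorems 5.7.2 and 5.7.5}. Once tameness is secured, the counting and the type dichotomy follow from the numerical bookkeeping above.
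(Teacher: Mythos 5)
Your argument has the right skeleton --- canonical bundle formula, numerical count, then identification of the half-fibers via the $2$-torsion of $\Pic^0(E_i)$ --- but it contains a genuine gap, and you name it yourself: the tameness of every multiple fiber. As written, the proof is conditional: you grant tameness (equivalently, that the normal bundle class $\eta_i$ has order exactly $m_i$) and defer its verification to ``the computation underlying'' the very theorems of Cossec--Dolgachev being proved, so nothing self-contained has been established. This is not a minor technical debt, because tameness is precisely where classicality enters and where the statement can fail: on singular and supersingular Enriques surfaces in characteristic $2$, a genus one fibration has exactly \emph{one} multiple fiber, which is wild, with $a_1=0$ and torsion of length $1$ in $R^1f_*\cO_X$. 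In particular your proposed mechanism --- ``$K_X$ has order two in $\Pic(X)$, so its restriction to $E_i$ is a nonzero $2$-torsion point'' --- cannot by itself rule out wildness, since for a wild fiber the restriction $K_X\restrictedto{E_i}=a_i\eta_i$ can be trivial no matter what the global order of $K_X$ is. (Note also that the paper itself offers no proof of this proposition; it is quoted from Cossec--Dolgachev, so your reduction modulo tameness and the paper's citation ultimately lean on the same source.)

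The irony is that the step you flag as requiring ``a direct analysis of the nonreduced multiple-fiber scheme'' has a short proof that you missed, using the one cohomological invariant that characterizes classical surfaces: $h^1(X,\cO_X)=0$. Since $f_*\cO_X=\cO_{\bP^1}$ and $H^1(\bP^1,\cO)=H^2(\bP^1,\cO)=0$, the Leray spectral sequence gives $H^1(X,\cO_X)\cong H^0(\bP^1,R^1f_*\cO_X)$. Writing $R^1f_*\cO_X=L\oplus T$ with $L$ invertible and $T$ torsion, one has $\deg L=-\chi(\cO_X)-\length(T)=-1-\length(T)<0$, hence $H^0(L)=0$ and $h^1(X,\cO_X)=\length(T)$. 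For a classical surface this forces $T=0$, and since the wild fibers are exactly the points where $T$ is nonzero, there are none: every multiple fiber is tame, $a_i=m_i-1$, $\ord(\eta_i)=m_i$, and then your arithmetic ($r=2$, $m_1=m_2=2$) and your $\Pic^0$ dichotomy go through. Three smaller corrections: the canonical bundle formula has $0\le a_i\le m_i-1$ (you wrote $a_i\ge m_i-1$); $\deg\cL$ equals $\chi(\cO_X)+\length(T)$, so it is \emph{not} pinned down by $\chi(\cO_X)=1$ alone until wild fibers are excluded; and the $2$-torsion of a supersingular elliptic curve is an infinitesimal group scheme of order $4$ (a nonsplit extension of $\alpha_2$ by $\alpha_2$), not $\alpha_2$ itself, though your conclusion that it has no nontrivial $k$-points is correct.
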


\subsection{Derivations}

A \emph{(regular) derivation} on a scheme $X$ 
is a $k$-linear endomorphism $D$ of $\cO_X$ 
satisfying $D(ab) = a D(b) + D(a) b$.

The \emph{fixed locus} $\Fix(D)$ of a derivation $D$
is the closed subscheme of $X$ 
corresponding to the ideal $(\Image (D))$ generated by $\Image(D) = \set{D(a) \mid a \in \cO_X}$.
If $X$ is normal and $D \neq 0$, then the \emph{divisorial part} of $\Fix(D)$ is denoted by $\divisorialfix{D}$.

Assume $X$ is a smooth integral surface and $D \neq 0$. 
Then we define the \emph{isolated part} of $\Fix(D)$, denoted $\isolatedfix{D}$, as follows.
If we write $D = f (g \partial/\partial x + h \partial / \partial y)$ with $g,h$ coprime for some local coordinate $x,y$,
then $\divisorialfix{D}$ and $\isolatedfix{D}$ correspond to the ideal $(f)$ and $(g,h)$ respectively.

Suppose for simplicity that $X$ is integral.
Then a \emph{rational derivation} on $X$ 
is a global section of $\Der(\cO_X) \otimes_{\cO_X} k(X)$, 
where $\Der(\cO_X)$ is the sheaf of derivations on $X$.
Thus, a rational derivation is locally of the form $f^{-1} D$ with $f$ a regular function and $D$ a regular derivation.
We extend the notion of divisorial and isolated parts to rational derivations by
$\divisorialfix{f^{-1} D} = \divisorialfix{D} - \divisor(f)$ and $\isolatedfix{f^{-1} D} = \isolatedfix{D}$.

Suppose $\charac k = p > 0$.
A derivation $D$ is said to be \emph{of multiplicative type} (resp.\ \emph{of additive type})
if $D^p = D$ (resp.\ $D^p = 0$).
Such derivations correspond to actions of the group scheme $\mu_p$ (resp.\ $\alpha_p$) on the scheme.
More generally, $D$ is said to be \emph{$p$-closed} if there exists $h \in k(X)$ with $D^p = h D$.

We recall the Rudakov--Shafarevich formula and the Katsura--Takeda formula.
\begin{thm}[Rudakov--Shafarevich \cite{Rudakov--Shafarevich:inseparable}*{Corollary 1 to Proposition 3}]
	Let $D$ be a nonzero $p$-closed rational derivation on a smooth variety $X$ in characteristic $p > 0$.
	Denote by $\map{\pi}{X}{X^D = Y}$ the quotient morphism.
	Then we have
	\[ 
	K_X \sim \pi^* K_Y + (p-1) \divisorialfix{D},
	\]
	where $\sim$ is the linear equivalence.
\end{thm}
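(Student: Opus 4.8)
The plan is to read the assertion as an equality of divisor classes on $X$ and to compute the ramification of the purely inseparable degree-$p$ morphism $\pi$ in codimension one. First I would record the structural facts that make this reduction legitimate. Since $D$ is $p$-closed, $\cO_Y = \Ker(D) = \cO_X \cap k(Y)$ with $k(Y) = \Frac(\cO_Y)$, and an intersection of a normal domain with a subfield is integrally closed, so $Y$ is normal; moreover $p$-closedness forces $[k(X):k(Y)] = p$, so that $\pi$ is finite and purely inseparable of degree exactly $p$ with $\dim Y = \dim X$. I may freely replace $D$ by $gD$ for $g \in k(X)^\times$: this leaves $Y$, hence $\pi^* K_Y$, unchanged, while $\divisorialfix{gD} = \divisorialfix{D} + \divisor(g)$ differs from $\divisorialfix{D}$ only by a principal divisor. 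Hence the asserted equivalence is insensitive to this replacement and I may assume $D$ is regular. Finally, because $X$ is smooth and $Y$ is normal, the classes $K_X$, $\pi^* K_Y$ and $\divisorialfix{D}$ in $\Cl(X)$ are all determined in codimension one, so it suffices to produce a rational section of $\omega_X \otimes (\pi^* \omega_Y)^\vee$ and compute its divisor along each prime divisor $\Gamma \subset X$.

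The heart is to build that section from $D$ itself and to compute it locally. The obstruction to the naive approach is that $\pi$ is purely inseparable, so $d\pi$ kills top forms ($\pi^*\eta_Y = 0$ for a generator $\eta_Y$ of $\omega_Y$), and one cannot compare $\omega_X$ and $\pi^* \omega_Y$ through $d\pi$. Instead I would use duality for the finite morphism $\pi$: near a prime divisor $\pi_* \cO_X$ is a free $\cO_Y$-module of rank $p$, cyclic because $[k(X):k(Y)] = p$, and the top dual-basis functional trivialises the relative dualizing module $\omega_{X/Y} \cong \sHom_{\cO_Y}(\pi_*\cO_X, \cO_Y)$. Pairing this against $D$ — equivalently, using the saturated rank-one $1$-foliation $\cF$ generated by $D$, for which $\cF \cong \cO_X(\divisorialfix{D})$ — converts a local generator of $\pi^*\omega_Y$ into a rational generator of $\omega_X$, giving the desired $D$-dependent section.

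With the section in hand I would compute its divisor by classifying the local behaviour of a $p$-closed $D$ along $\Gamma$ (DVR $R = \cO_{X,\Gamma}$, uniformizer $t$, $S = R^D$) into three cases, each reducible to a normal form: (i) $\Gamma \not\subseteq \divisorialfix{D}$ and $\Gamma$ not $D$-invariant, so $D$ is transverse ($D = \partial/\partial t$ or $D = t\,\partial/\partial t$) and $\pi$ is totally, inseparably ramified along $\Gamma$; (ii) $\Gamma \not\subseteq \divisorialfix{D}$ but $D$-invariant, so $\Gamma$ descends with a purely inseparable residue extension; and (iii) $\Gamma \subseteq \divisorialfix{D}$ with multiplicity $m$, where $D = t^m D_0$ with $D_0$ transverse. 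A direct computation of $\sHom_S(R,S)$ in each normal form shows that the section is a unit along $\Gamma$ in cases (i) and (ii) and vanishes to order $(p-1)m$ in case (iii). Summing over all prime divisors shows the section has divisor $(p-1)\divisorialfix{D}$, i.e.\ $K_X \sim \pi^* K_Y + (p-1)\divisorialfix{D}$.

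The hard part, and the step most prone to error, will be the local analysis: pinning down the correct $p$-closed normal forms for $D$ along $\Gamma$ (transverse versus invariant), coping with the fact that the usual different formula $(F'(\theta))$ degenerates to zero because the extension is inseparable, and tracking the multiplicity $m$ through the $D$-dependent comparison so that its order of vanishing comes out as exactly $(p-1)m$ — and not $0$, which is the value one would wrongly read off from $\omega_{X/Y}$ alone, since that sheaf sees only $\pi$ and not $D$. Once the three local models and the $D$-dependent trivialisation are correct, the codimension-one principle together with the normality of $Y$ delivers the formula.
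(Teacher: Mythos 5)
Two preliminary remarks: the paper does not prove this statement at all --- it is quoted from Rudakov--Shafarevich --- so your proposal can only be measured against the standard proofs in the literature, and your route is indeed one of them: reducing to codimension one, invoking duality for the finite (flat in codimension one) morphism $\pi$ to write $\omega_X \cong \pi^*\omega_Y \otimes \omega_{X/Y}$ with $\omega_{X/Y} = \sHom_{\cO_Y}(\pi_*\cO_X,\cO_Y)$, and identifying $\omega_{X/Y}$ with $\cO_X((p-1)\divisorialfix{D})$ by a local computation is the foliation-theoretic argument in the style of Ekedahl. Your reduction steps (normality of $Y$, $[k(X):k(Y)]=p$ from $p$-closedness, insensitivity to rescaling $D$) are correct, and so are the claimed local answers: the comparison section is a unit along a prime divisor $\Gamma$ of multiplicity $0$ in $\divisorialfix{D}$, whether or not $\Gamma$ is $D$-invariant, and has valuation $(p-1)m$ when the multiplicity is $m$.

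There is, however, one genuine gap, and it sits at the heart of the argument: you never say what the ``$D$-dependent section'' of $\omega_{X/Y}$ is. ``Pairing the top dual-basis functional against $D$'' is not a construction --- $\omega_{X/Y}$ consists of $\cO_Y$-linear functionals on $\pi_*\cO_X$, $D$ is a derivation, and there is no evident pairing between them; producing a functional out of $D$ is exactly the nontrivial step, and without it the local computation you outline can only return the divisor of the dual-basis generator itself, namely $0$. The object you need is
\[
\Phi_D := D^{p-1} - h, \qquad \text{where } D^p = hD .
\]
One checks $D(h) = 0$, so $h \in k(Y)$; hence $\Phi_D$ maps $\cO_X$ into $\Ker(D) = \cO_Y$ and is $\cO_Y$-linear, i.e.\ it is a rational section of $\omega_{X/Y}$. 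The characteristic-$p$ identity $\Phi_{fD} = f^{p-1}\,\Phi_D$ (with the module structure $(f\phi)(g) = \phi(fg)$; it follows from Hochschild's formula $(fD)^p = f^pD^p + ((fD)^{p-1}f)\,D$) is what produces the factor $(p-1)m$: writing $D = t^m D_0$ along $\Gamma$ with $D_0$ generating the saturated foliation, it reduces your case (iii) to showing that $\Phi_{D_0}$ generates $\omega_{R/S}$, and that is your cases (i)--(ii): locally $R = S[u]$ with $u^p \in S$, $\Der_S(R) = R\,\partial_u$, and $\Phi_{\partial_u} = \partial_u^{p-1} = -(u^{p-1})^\vee$ is precisely the top dual-basis functional. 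Two smaller slips in your case analysis should also be repaired: $t\,\partial/\partial t$ is not transverse to $\Gamma = \{t=0\}$ (it fixes $\Gamma$ divisorially with $m=1$, so it belongs to case (iii); the multiplicative transverse model is rather $u\,\partial/\partial u$ along $\{u = 1\}$), and in case (iii) the residual derivation $D_0$ need not be transverse --- for $D = x^m\,\partial/\partial y$ along $\Gamma = \{x=0\}$ it is tangent to $\Gamma$ --- although, as the homogeneity of $\Phi$ shows, the answer $(p-1)m$ is the same in both subcases.
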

\begin{thm}[Katsura--Takeda \cite{Katsura--Takeda:quotients}*{Proposition 2.1}] \label{thm:c2 derivation}
	Let $D$ be a nonzero rational derivation on a smooth proper surface $X$. 
	Then
	\[
	\deg c_2(X) = \deg \isolatedfix{D} - K_X \cdot \divisorialfix{D} - \divisorialfix{D}^2.
	\]
\end{thm}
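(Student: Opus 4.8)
The plan is to interpret the nonzero rational derivation $D$ as a nonzero \emph{rational section} of the tangent sheaf $T_X = \Der(\cO_X)$, which is a rank-$2$ vector bundle on the smooth proper surface $X$, and then to read off the formula from a Chern class computation applied to a suitable short exact sequence. The left-hand side $\deg c_2(X)$ is $\int_X c_2(T_X)$, so the statement is really an identity of characteristic numbers; the content lies in expressing the degree-$2$ part of $c(T_X)$ through the two pieces $\divisorialfix{D}$ and $\isolatedfix{D}$ into which the vanishing locus of $D$ naturally splits.

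First I would build the exact sequence. Using the local presentation $D = f\,(g\,\partial/\partial x + h\,\partial/\partial y)$ with $g,h$ coprime, the \emph{primitive part} $g\,\partial/\partial x + h\,\partial/\partial y$ has no divisorial zeros, and on overlaps these local primitive derivations differ by units (the ratios of the local factors $f$), which are precisely the transition functions of $\cO_X(\divisorialfix{D})$. Hence the primitive part glues to a sheaf inclusion
\[
s\colon \cO_X(\divisorialfix{D}) \hookrightarrow T_X ,
\]
the generator mapping (locally, after the twist) to $g\,\partial/\partial x + h\,\partial/\partial y$. I would then identify the cokernel $Q := \Coker(s)$. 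Since $g,h$ are coprime, $s$ drops rank only in codimension $2$, so $Q$ is torsion-free in codimension $1$; being a rank-$1$ sheaf on a smooth surface it is therefore of the form $\cI_Z \otimes M$ for a $0$-dimensional subscheme $Z$ and a line bundle $M$. Comparing first Chern classes forces $M = c_1(T_X) - \divisorialfix{D} = -K_X - \divisorialfix{D}$, while the local ideal $(g,h)$ of $Z$ is exactly the one defining $\isolatedfix{D}$, so $Z = \isolatedfix{D}$ and $\length(\cO_Z) = \deg \isolatedfix{D}$.

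With the sequence $0 \to \cO_X(\divisorialfix{D}) \xrightarrow{s} T_X \to \cI_Z \otimes M \to 0$ in hand, the rest is the Whitney formula. Writing $\Delta := \divisorialfix{D}$, one has $c(\cO_X(\Delta)) = 1 + \Delta$, while $c_1(\cI_Z \otimes M) = M$ and $c_2(\cI_Z \otimes M) = [Z]$, the class of the $0$-cycle $Z$ (the twist by $M$ does not affect $c_2$ of a rank-$1$ sheaf, and $\deg[Z] = \length(\cO_Z) = \deg \isolatedfix{D}$). Multiplying the total Chern classes, the degree-$1$ part recovers $c_1(T_X) = \Delta + M$ (a consistency check), and the degree-$2$ part gives
\[
c_2(T_X) = \Delta \cdot M + [Z].
\]
Taking degrees, substituting $M = -K_X - \Delta$ and $\deg c_2(T_X) = \deg c_2(X)$, and rearranging yields $\deg c_2(X) = \deg \isolatedfix{D} - K_X \cdot \divisorialfix{D} - \divisorialfix{D}^2$, which is the claim.

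The step I expect to be the main obstacle is the identification of the cokernel $Q$: checking that factoring out the divisorial part genuinely produces a subsheaf inclusion whose cokernel is torsion-free in codimension $1$, and that the residual $0$-dimensional scheme $Z$ is exactly $\isolatedfix{D}$ with $\length(\cO_Z) = \deg \isolatedfix{D}$. This rests entirely on the coprimality of $g$ and $h$ in the local presentation and on the compatibility of the local models on overlaps. One small subtlety is that $\divisorialfix{D}$ need not be effective when $D$ is only rational, so $\cO_X(\divisorialfix{D})$ is a genuine line bundle (allowing poles) rather than an ideal sheaf; but the Chern-class bookkeeping is insensitive to this, and the local length computation defining $\deg \isolatedfix{D}$ carries over verbatim.
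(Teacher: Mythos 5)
Your argument is correct, and it is essentially the classical one: the paper itself does not prove this statement (it imports it as Theorem~\ref{thm:c2 derivation} from Katsura--Takeda's Proposition~2.1), and the original proof proceeds exactly as you do, by saturating the rational section of $T_X$ into an exact sequence $0 \to \cO_X(\divisorialfix{D}) \to T_X \to \cI_Z \otimes \cO_X(-K_X - \divisorialfix{D}) \to 0$ with $Z = \isolatedfix{D}$, and applying the Whitney formula. Your handling of the two delicate points — torsion-freeness of the cokernel via coprimality of $g,h$, and the fact that $\divisorialfix{D}$ need not be effective for a rational derivation, so $\cO_X(\divisorialfix{D})$ is a genuine line bundle rather than an ideal sheaf — is exactly what makes the computation go through.
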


In characteristic $p = 2$ we have the following corollary of the Rudakov--Shafarevich formula.
\begin{prop}[\cite{Ekedahl--Hyland--Shepherd-Barron}*{Lemma 3.14}] \label{prop:half fix}
	Let $D$ be a nonzero $p$-closed rational derivation on a smooth variety $X$ in characteristic $p = 2$.
	Then $K_X - \divisorialfix{D}$ is divisible by $2$ in $\Pic(X)$.
\end{prop}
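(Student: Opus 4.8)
The plan is to obtain this as a consequence of the Rudakov--Shafarevich formula combined with an analysis of the rank-one foliation generated by $D$. Let $\pi\colon X \to Y := X^D$ denote the quotient morphism. The Rudakov--Shafarevich formula with $p = 2$ reads $K_X \sim \pi^* K_Y + \divisorialfix{D}$, so that $K_X - \divisorialfix{D} \sim \pi^* K_Y$, and it is enough to show that $\pi^* K_Y$ is divisible by $2$ in $\Pic(X)$.

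I would next recast this in terms of the saturated foliation $\cF \subseteq T_X$ spanned by $D$. Writing $D = s\cdot D_0$ with $s$ cutting out $\divisorialfix{D}$ and $D_0$ having only isolated zeros, one gets $\cF \cong \cO_X(\divisorialfix{D})$ together with a sequence $0 \to \cF \to T_X \to \cN \to 0$ that is exact away from the finite set $\isolatedfix{D}$, where $\cN$ is the saturated normal line bundle. Comparing first Chern classes yields $-K_X = \divisorialfix{D} + c_1(\cN)$ in $\Pic(X)$, hence $K_X - \divisorialfix{D} \equiv c_1(\cN) \pmod{2\Pic(X)}$. Thus the assertion is equivalent to the statement that the normal bundle $\cN$ of the foliation is divisible by $2$.

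The crux is that $\cN$ is essentially a Frobenius pullback. Since $D$ is $p$-closed, $\cF$ is a $1$-foliation, and the relative Frobenius $F\colon X \to \pthpower{X}$ factors as $X \xrightarrow{\pi} Y \xrightarrow{g} \pthpower{X}$; over the smooth locus of $Y$ the bundle $\cN$ is identified with $\pi^* T_Y$ up to a twist along the ramification divisor $\divisorialfix{D}$, so that $\cN$ is pulled back through the purely inseparable $\pi$, which accounts for one of the two factors of $F$. Concretely I would first rescale $D$ by a rational function, which changes neither the class $[\divisorialfix{D}] \in \Pic(X)$ nor $c_1(\cN)$, so as to put it in additive form $D^p = 0$ or multiplicative form $D^p = D$. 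In either case the $p$-closed condition equips the foliation with a Cartier-operator eigenform (a logarithmic $d\log$ form when $D^p = D$, a closed form annihilated by $C$ when $D^p = 0$); trivializing $\cN$ on an open cover adapted to the local flow coordinates of $D$, this eigenform structure forces the transition cocycle of $\cN$ to consist of $p$-th powers up to coboundaries, so that $c_1(\cN) \in p\,\Pic(X) = 2\,\Pic(X)$.

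The main obstacle is to make this last step rigorous and, especially, to handle the isolated singular points $\isolatedfix{D}$ of the foliation, where $Y$ acquires its (rational double point) singularities, the sequence $0 \to \cF \to T_X \to \cN \to 0$ fails to be exact, and the identification with $\pi^* T_Y$ breaks down. I would deal with this by restricting to $X \setminus \isolatedfix{D}$ and invoking the smoothness of $X$, so that $\Pic(X)$ is controlled by codimension-one data and the finitely many excluded points do not affect the divisibility; the additive and multiplicative cases can then be treated uniformly through the Cartier-operator description. A minor additional point to verify is that a general $p$-closed derivation ($D^p = hD$) can indeed be normalized to one of the two types by such a rescaling.
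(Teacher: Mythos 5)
Your proposal takes a genuinely different route from the paper's, and its skeleton is sound. Both proofs start identically: Rudakov--Shafarevich reduces the claim to divisibility of $\pi^* K_Y$, equivalently (via $0 \to \cF \to T_X \to \cN \to 0$) of $c_1(\cN)$. From there the mechanisms diverge. The paper descends all the way to $\secondpower{X}$: it takes the dual rational derivation $D'$ on $Y = X^D$ (so $Y^{D'} = \secondpower{X}$), rescales it so that $D'^2 = 0$, and observes that then $\Image(D') \subseteq \Ker(D')$ is a fractional ideal of $\cO_{\secondpower{X}}$; hence, after removing a codimension-$2$ set, $\divisorialfix{D'} = \pi'^*(\Delta)$ for a divisor $\Delta$ on $\secondpower{X}$, and a second application of Rudakov--Shafarevich gives $\pi^* K_Y \sim \pi^* \pi'^* (\Delta + K_{\secondpower{X}})$, which is divisible by $2$ because the image of $F^* = \pi^*\pi'^*$ is. Your route stays on $X$ and makes the Cartier operator do this descent: transition functions of $\cN^\vee$ in an adapted trivialization lie a priori only in $(\Ker D)^* = \cO_Y^*$ (this is your ``one of the two factors of $F$''), and the Cartier constraint is exactly what pushes them down into $(\cO_X^*)^2$. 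Your version avoids $\secondpower{X}$ and fractional ideals at the price of a local differential computation; the paper's version avoids that computation at the price of juggling two quotient maps.

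That computation is precisely the step you leave as an assertion (``forces the transition cocycle to consist of $p$-th powers up to coboundaries''), and you flag it yourself as the main obstacle; so as written the proof is incomplete at its crux. The claim is true, and here is how to close it. By $p$-closedness, near every point of $X \setminus \Supp\isolatedfix{D}$ --- including points of $\divisorialfix{D}$, where one must use the saturated local generator $D_0$ of $\cF$, whose local quotient is still smooth --- there is a local first integral $x_i \in \Ker(D)$ with $dx_i$ generating $\cN^\vee$. Write $dx_i = g_{ij}\, dx_j$ and expand $g_{ij} = a^2 + b^2 x_j + c^2 y + e^2 x_j y$ with respect to a $p$-basis $(x_j, y)$. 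Closedness of $g_{ij}\,dx_j = dx_i$ gives $\partial g_{ij}/\partial y = 0$, i.e.\ $c = e = 0$; then $0 = C(dx_i) = C(g_{ij}\,dx_j) = b\, dx_j$ gives $b = 0$, so $g_{ij} = a_{ij}^2$. In characteristic $2$ the $a_{ij}$ automatically form a cocycle (since $(a_{ij}a_{jk}a_{ki})^2 = 1$ and Frobenius is injective), so $c_1(\cN^\vee) \in 2\Pic(X \setminus \Supp\isolatedfix{D}) = 2\Pic(X)$, as required. Two corrections to your supporting discussion. First, $\cN$ cannot be ``identified with $\pi^* T_Y$'' (ranks $1$ and $2$); the correct statement is that $\cN^\vee$ is the image of $\pi^*\Omega^1_Y \to \Omega^1_X$. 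Second, in the multiplicative case the $d\log$ eigenform is the form dual to $D$ (it pairs to $1$ with $D$), so it is \emph{not} a section of $\cN^\vee$ and cannot serve as your adapted trivialization; this does no harm, because first-integral differentials give exact generators of $\cN^\vee$ whatever the type of $D$ --- equivalently, your ``minor point'' holds in the strong form that in characteristic $2$ every $p$-closed rational derivation rescales to one with square zero (take $u$ with $k(X) = k(Y)(u)$ and the derivation with $D_0(u) = 1$, $D_0|_{k(Y)} = 0$).
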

\begin{proof}
Let $\map{\pi}{X}{X^D = Y}$ the quotient morphism.
Then the ``dual'' morphism $\map{\pi'}{Y}{\secondpower{X}}$ is purely inseparable of degree $2$,
hence is the quotient by some rational derivation $D'$.
By replacing with a multiple we may assume $D'^2 = 0$.
Then $I := \Image(D')$ is a fractional ideal of $\secondpower{X}$.
By removing a closed subscheme of $X$ of codimension at least $2$ (which does not change the Picard group), we may assume $I$ is principal, thus identified with a divisor $\Delta$.
Then $\divisorialfix{D'} = \pi'^*(\Delta)$.
By the Rudakov--Shafarevich formula, we have
\begin{align*}
K_X - \divisorialfix{D} 
  \sim \pi^* K_Y 
 \sim \pi^* (\divisorialfix{D'} + \pi'^*(K_{\secondpower{X}})) 
 \sim \pi^* \pi'^* (\Delta + K_{\secondpower{X}}), 
\end{align*}
and the image of $\pi^* \pi'^* = F^*$
is divisible by $p$.
\end{proof}

\subsection{Restricted Lie algebras of dimension $2$} \label{subsec:Lie}

Recall that a \emph{restricted Lie algebra} over a field $k$ of characteristic $p > 0$
is a $k$-vector space $\fg$
together with two operation, 
the bracket $\map{[\functorspace, \functorspace]}{\fg \times \fg}{\fg}$
and the $p$-th power map $\map{\pthpower{\functorspace}}{\fg}{\fg}$,
satisfying certain conditions.
An example is $H^0(X, T_X)$ for a scheme $X$,
where the bracket is the usual one ($[D_1, D_2] = D_1 \circ D_2 - D_2 \circ D_1$)
and the $p$-th power $\pthpower{D}$ of $D$ is the $p$-th iterate $D^p = D \circ \dots \circ D$.
(In this example, $\circ$ is defined only on $\sEnd(\cO_X) \supset T_X$, but $[D_1, D_2]$ and $D^p$ belong to $T_X$.)

We say that 
an element $x$ of a restricted Lie algebra $\fg$ 
is \emph{$p$-closed} if it satisfies $\pthpower{x} = \lambda x$ for some scalar $\lambda \in k$,
and that it is \emph{of multiplicative type} (resp.\ \emph{of additive type}) 
if we can take $\lambda \neq 0$ (resp.\ $\lambda = 0$).
We also say that a line $[x] = k x \subset \fg$
generated by a nonzero element $x$
is \emph{$p$-closed}, \emph{of multiplicative type}, or \emph{of additive type}
if it contains a nonzero element with those properties.

Note that if $X$ is proper and $T_{X}$ is free, 
then a line of $\fg = H^0(X, T_X)$ is $p$-closed (in this sense, where the ratio is a scalar) 
if and only if some, equivalently any, nonzero element in the line is $p$-closed (in the sense of Section \ref{subsec:basic}, where the ratio can be any rational function).

\begin{prop}[\cite{Wang:hopfalgebras}*{Proposition A.3}] \label{prop:classification of Lie algebra}
	There are exactly $5$ isomorphism classes of restricted Lie algebras $\fg$ of dimension $2$ 
	(over a fixed algebraically closed field $k$ in characteristic $p > 0$).
	In each case there is a basis $x,y$ satisfying the following properties.
	\begin{enumerate}
		\item \label{type:non-abelian} $[x,y] = y$, $\pthpower{x} = x$, $\pthpower{y} = 0$.
		\item \label{type:all zero}    $[x,y] = 0$, $\pthpower{x} = 0$, $\pthpower{y} = 0$.
		\item \label{type:1-0}         $[x,y] = 0$, $\pthpower{x} = x$, $\pthpower{y} = 0$.
		\item \label{type:nilpotent}   $[x,y] = 0$, $\pthpower{x} = y$, $\pthpower{y} = 0$.
		\item \label{type:1-1}         $[x,y] = 0$, $\pthpower{x} = x$, $\pthpower{y} = y$.
	\end{enumerate}
\end{prop}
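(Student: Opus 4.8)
The plan is to split the classification according to whether $\fg$ is abelian, i.e.\ whether the derived subalgebra $[\fg,\fg]$ vanishes, and in each case to pin down first the underlying Lie bracket and then the restricted $p$-power map.

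First I would treat the \emph{non-abelian} case. Here I would recall the classical fact that over any field a non-abelian Lie algebra of dimension $2$ is isomorphic to the one with basis $x,y$ and $[x,y]=y$: the space $[\fg,\fg]$ is then $1$-dimensional, spanned by some $z$, and for any $w\notin kz$ one has $[w,z]=\lambda z$ with $\lambda\neq 0$ (otherwise all brackets in the basis $w,z$ vanish and $\fg$ is abelian), so $(x,y):=(\lambda^{-1}w,z)$ gives the stated relation. With this normalization the $p$-map is forced: using the restricted-Lie-algebra axiom $\ad(\pthpower{a})=(\ad a)^p$ together with the fact that this algebra has trivial center, $\pthpower{a}$ is determined by $(\ad a)^p$. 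Computing $\ad x$, which has eigenvalues $0,1$, gives $(\ad x)^p=\ad x$ and hence $\pthpower{x}=x$; computing $\ad y$, for which $(\ad y)^2=0$, gives $(\ad y)^p=0$ and hence $\pthpower{y}=0$. Since a restricted structure is determined by its values on a basis, this yields precisely type (1) and nothing else.

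Next, in the \emph{abelian} case every bracket vanishes, so the correction terms $s_i(a,b)$ in the additivity formula $\pthpower{(a+b)}=\pthpower{a}+\pthpower{b}+\sum_i s_i(a,b)$, being Lie polynomials built from brackets, all vanish. Thus $\phi:=\pthpower{(\functorspace)}$ is additive, and with $\phi(\lambda a)=\lambda^p\phi(a)$ it is a $p$-semilinear (Frobenius-semilinear) endomorphism of the $2$-dimensional $k$-vector space $\fg$; conversely any such $\phi$ defines an abelian restricted structure. So I would reduce the abelian classification to classifying $p$-semilinear endomorphisms of a $2$-dimensional space up to semilinear change of basis. Because $k$ is perfect, $\Image(\phi)$ and $\Ker(\phi)$ (and those of the iterates of $\phi$) are $k$-subspaces, so a semilinear Fitting decomposition $\fg=V_s\oplus V_n$ holds, with $\phi$ bijective on $V_s$ and nilpotent on $V_n$. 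Over the algebraically closed field $k$ the bijective part is \emph{\'etale}, i.e.\ $V_s$ has a $k$-basis of $\phi$-fixed vectors, and a nilpotent $p$-semilinear operator in dimension $\le 2$ is either $0$ or, in the rank-one case, has the normal form $\phi(x)=y,\ \phi(y)=0$. Enumerating by $\dim_k V_s\in\{0,1,2\}$ together with the nilpotent structure produces exactly types (2) ($\phi=0$), (3) ($\dim_k V_s=1$), (4) (nilpotent of rank one), and (5) ($\dim_k V_s=2$).

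Finally I would check that the five algebras are pairwise non-isomorphic — type (1) is the only non-abelian one, and the abelian ones are separated by the invariants $\dim_k V_s$ and whether $\phi$ vanishes on $V_n$ — and verify directly that each listed algebra does carry a restricted structure. The hard part will be the semilinear structure theory used in the abelian case, specifically the claim that over an algebraically closed field the bijective part of a $p$-semilinear operator is spanned by fixed vectors: this is the single place where algebraic closedness is genuinely needed and where a real input (the semilinear analogue of Lang's theorem, i.e.\ existence of an $\bF_p$-form) is required rather than a routine computation.
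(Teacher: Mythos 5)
The paper does not prove this proposition at all: it is imported as a citation to \cite{Wang:hopfalgebras}*{Proposition A.3}, so there is no internal argument to compare yours against, and your proposal must be judged on its own merits. Judged so, it is correct and complete. The non-abelian/abelian split is the natural one. In the non-abelian case your argument is airtight: the bracket normal form $[x,y]=y$ is classical, the center of this algebra is trivial, so $\ad$ is injective and the axiom $\ad(\pthpower{a})=(\ad a)^p$ forces $\pthpower{x}=x$ (since $\ad x$ is diagonalizable with eigenvalues $0,1$) and $\pthpower{y}=0$ (since $(\ad y)^2=0$); uniqueness of the $p$-map is automatic because the difference of two $p$-maps on a fixed Lie algebra is a semilinear map into the center. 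In the abelian case the identification of the $p$-map with a $p$-semilinear endomorphism $\phi$ is right (the correction terms $s_i$ are sums of brackets), the Fitting decomposition $\fg = V_s \oplus V_n$ is valid for semilinear operators because $k$ perfect makes $\Image(\phi^n)$ a subspace and rank--nullity still holds, and the enumeration by $\dim V_s \in \set{0,1,2}$ plus the rank of the nilpotent part gives exactly types (\ref{type:all zero})--(\ref{type:1-1}), with these invariants also proving pairwise non-isomorphism. You correctly isolate the one non-formal input, namely that a bijective $p$-semilinear operator over an algebraically closed field is spanned by its fixed vectors; this is the standard triviality of \'etale Frobenius modules over separably closed fields, and in the present situation it can even be verified by hand: if $v, \phi(v)$ is a basis and $\phi^2(v) = a\phi(v) + bv$ (with $b \neq 0$ by bijectivity), a fixed vector $\mu^p b\, v + \mu\,\phi(v)$ exists because the polynomial $\mu^{p^2} b^p + \mu^p a - \mu$ is separable (derivative $-1$) and has nonzero roots, after which one descends to the quotient by an Artin--Schreier equation. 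So no hidden gap remains; your proof is a legitimate self-contained substitute for the external reference.
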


We will use the following observations to describe the restricted Lie algebra of the canonical coverings.
\begin{cor} \label{cor:Lie algebra}
	Let $\fg$ be as in Proposition \ref{prop:classification of Lie algebra}.
	\begin{enumerate}
		\item \label{case:2-1}
		Suppose $\fg$ has at least $3$ $p$-closed lines, 
		among which at least $1$ is of multiplicative type and at least $1$ is of additive type.
		Then $\fg$ is of type (\ref{type:non-abelian}), 
		all lines are $p$-closed, 
		and exactly $1$ is of additive type and all others are of multiplicative type.
		
		\item \label{case:0-2}
		Suppose $\fg$ has at least $2$ lines of additive type.
		Then $\fg$ is of type (\ref{type:all zero}), 
		and all lines are $p$-closed of additive type.
	\end{enumerate}
\end{cor}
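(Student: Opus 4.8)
The plan is to treat the five isomorphism types of Proposition~\ref{prop:classification of Lie algebra} one at a time, and for each to determine the set of $p$-closed lines together with the type (multiplicative or additive) of each; both assertions then follow by elimination from the numerical hypotheses. Throughout I use that the type of a $p$-closed line $kv$ is independent of the nonzero representative: from the axiom $\pthpower{(\lambda v)} = \lambda^p \pthpower{v}$, if $\pthpower{v} = \mu v$ then $\pthpower{(\lambda v)} = \lambda^{p-1}\mu\,(\lambda v)$, so for $\lambda \neq 0$ the eigenvalue is nonzero for every representative or for none.

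First I would dispose of the four abelian types (\ref{type:all zero})--(\ref{type:1-1}). For an abelian restricted Lie algebra the Jacobson correction terms in $\pthpower{(v+w)}$ all involve brackets and hence vanish, so the $p$-th power map $F\colon v \mapsto \pthpower{v}$ is additive and $p$-semilinear, $F(\lambda v) = \lambda^p F(v)$. Thus $kv$ is $p$-closed precisely when $F(v) \in kv$, and writing $v = ax+by$ the evaluation $F(v) = a^p \pthpower{x} + b^p \pthpower{y}$ is an immediate computation. It yields: type (\ref{type:all zero}) has every line $p$-closed and additive; type (\ref{type:1-0}) has exactly the two lines $kx$ (multiplicative) and $ky$ (additive); type (\ref{type:nilpotent}) has the single $p$-closed line $ky$, additive, and no multiplicative line; and type (\ref{type:1-1}) has exactly $p+1$ $p$-closed lines, all multiplicative, and no additive line.

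The one genuinely nonlinear case, and the step I expect to be the main obstacle, is the non-abelian type (\ref{type:non-abelian}), where the Jacobson terms survive and $F$ is no longer semilinear. Here I would avoid the general formula by using a concrete model on $k[t]$: take $x = -t\,\partial/\partial t$ and $y = \partial/\partial t$, which satisfy $[x,y]=y$, $\pthpower{x}=x$, and $\pthpower{y}=0$, hence realize type (\ref{type:non-abelian}). For $v = ax+by$ the corresponding operator is $v = f\,\partial/\partial t$ with $f = b-at$, and $\pthpower{v} = v^p = \bigl(v^{p-1}(f)\bigr)\,\partial/\partial t$. Since $v(f) = f f' = -af$, i.e.\ $f$ is an eigenvector of $v$, one gets $v^{p-1}(f) = (-a)^{p-1} f$ and therefore $\pthpower{v} = (-a)^{p-1} v = a^{p-1} v$ (using $(-1)^{p-1}=1$). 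So in type (\ref{type:non-abelian}) every line is $p$-closed: the line $ky$ (where $a=0$) is additive, and every other line (where $a \neq 0$) is multiplicative.

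With this table both claims are immediate. For (\ref{case:2-1}), the hypothesis of at least three $p$-closed lines excludes types (\ref{type:1-0}) and (\ref{type:nilpotent}), the presence of a multiplicative line excludes type (\ref{type:all zero}), and the presence of an additive line excludes type (\ref{type:1-1}); hence $\fg$ is of type (\ref{type:non-abelian}), and the computation above shows all lines are $p$-closed with exactly one ($ky$) additive and all others multiplicative. For (\ref{case:0-2}), two additive lines exclude types (\ref{type:non-abelian}), (\ref{type:1-0}), (\ref{type:nilpotent}) (each having at most one additive line) and type (\ref{type:1-1}) (having none); hence $\fg$ is of type (\ref{type:all zero}), all of whose lines are $p$-closed of additive type.
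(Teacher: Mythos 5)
Your proof is correct and follows essentially the same route as the paper's: determine the $p$-closed lines and their (multiplicative/additive) types in each of the five isomorphism classes of Proposition \ref{prop:classification of Lie algebra}, then conclude both statements by elimination. The only real difference is in the non-abelian case, where the paper directly asserts the computation $\pthpower{(ax+by)} = a^{p-1}(ax+by)$ while you derive the same identity from the concrete realization $x = -t\,\partial/\partial t$, $y = \partial/\partial t$ acting on $k[t]$ — a clean way of justifying that formula without invoking Jacobson's formula.
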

\begin{proof}
	Given the classification, we can describe the $p$-closed lines in each case by a straightforward calculation (see below).
	We conclude that if $\fg$ is of type (\ref{type:non-abelian}) or (\ref{type:all zero}) in Proposition \ref{prop:classification of Lie algebra}
	then the $p$-closed lines are as described in the statement of this corollary; 
	and if $\fg$ is of type (\ref{type:1-0}) (resp.\ (\ref{type:nilpotent}), resp.\ (\ref{type:1-1})),
	then exactly $1$ (resp.\ $0$, resp.\ $p + 1$) line is of multiplicative type,
	exactly $1$ (resp.\ $1$, resp.\ $0$) line is of additive type,
	and no other lines are $p$-closed.
	The assertions follow.
	
	For example, if $\fg$ is of type (\ref{type:non-abelian}) and $v = a x + b y$,
	then $\pthpower{v} = a^p x + a^{p-1} b y = a^{p-1} (a x + b y)$
	is always proportional to $v$, and $\pthpower{v} = 0$ if and only if $a = 0$.
	If $\fg$ is of type (\ref{type:1-1}) and $v = a x + b y \neq 0$,
	then $\pthpower{v} = a^p x + b^p y$ is never $0$,
	and it is proportional to $v$ 
	if and only if $\det{\begin{pmatrix}a^p & b^p \\ a & b \end{pmatrix}} = a b (a^{p-1} - b^{p-1}) = 0$.
\end{proof}

\section{$p$-closed derivations and quotients} \label{sec:derivation quotient}

\subsection{Derivations on RDPs and EDPs} \label{subsec:RDP EDP}

\begin{defn}
	An \emph{elliptic singularity} is an isolated surface singularity $x \in X$ 
	with $\length (R^1 f_* \cO)_x = 1$, where $f$ is a resolution of singularity.
	An \emph{elliptic double point} (EDP) is an elliptic singularity that is a double point.
\end{defn}

\begin{defn} \label{def:Ell12a}
	In this paper, 
	we say that a $2$-dimensional local $k$-algebra in characteristic $p = 2$ is an \emph{EDP of type $\Ella$}
	if its completion is isomorphic to $k[[x,y,z]] / (z^2 + x^3 + y^7)$.
\end{defn}
	This is the quotient of $k[[X,Y]]$ by the derivation $D$ defined by $D(X) = Y^6$ and $D(Y) = X^2$,
	with $x = X^2$, $y = Y^2$, $z = X^3 + Y^7$.

It is easy to see that it is an EDP
whose minimal resolution consists of a rational cuspidal curve of self-intersection $-1$. 
We observe that 
$k[[x,y,z]] / (z^2 + x^3 + y^7 + \varepsilon)$
is also an EDP of type $\Ella$
if $\varepsilon \in (x^5, x^3 y, x^2 y^3, x y^4, y^9) \subset k[[x,y]]$.

This symbol $\Ella$ is used for the (exceptional unimodal) singularity in characteristic $0$ defined by the same equation,
and the index $12$ stands for the Milnor number (i.e.\ $\dim_k k[[x,y,z]]/(F_x, F_y, F_z)$ for $k[[x,y,z]] / (F)$) in characteristic $0$,
although in characteristic $2$ this is not the Milnor number (nor the Tjurina number). 
Instead we have the equality between the index and the degree $\deg \isolatedfix{D}$ of the derivation.
The same equality also holds for RDPs of type $A_1$, $D_{2n}^0$, $E_7^0$, and $E_8^0$ (\cite{Matsumoto:k3alphap}*{Corollary 3.9}).

\begin{prop} \label{prop:image of derivation}
	Let $\bar{W} = \Spec B$ be an EDP of type $\Ella$ in characteristic $2$
	and $D$ a $p$-closed derivation on $\bar{W}$ with $\Fix(D) = \emptyset$.
	Then $Z = \bar{W}^D$ is smooth.
\end{prop}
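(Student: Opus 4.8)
The plan is to work throughout with the complete local ring $B = k[[x,y,z]]/(z^2+x^3+y^7)$ and its maximal ideal $\fm = (x,y,z)$, since smoothness of $Z$ can be checked on completions. First I would describe all derivations on $B$: such a $D$ is determined by $D(x), D(y), D(z) \in B$ subject to the single relation $D(z^2+x^3+y^7) = 0$, which in characteristic $2$ reads $x^2 D(x) + y^6 D(y) = 0$. Writing $B = R \oplus R z$ with $R = k[[x,y]]$ (a UFD) and comparing the two $R$-components of this relation, one finds $D(x) = y^6 \beta$ and $D(y) = x^2 \beta$ for some $\beta \in B$; in particular $D(x), D(y) \in \fm$. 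Since $\Image(D)$ is generated as an ideal by $D(x), D(y), D(z)$, the hypothesis $\Fix(D) = \emptyset$, i.e. $(\Image(D)) = B$, forces $c := D(z)$ to be a unit. This is the only place the fixed-point-freeness enters, and it is the geometric heart of why the singularity can improve.

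Next I would set $\cO_Z = \ker(D)$, which is normal (the kernel of a derivation on a normal domain is integrally closed in its fraction field). Because $D$ is $p$-closed, the quotient $\bar W \to Z$ is purely inseparable of degree $2$, so $[\Frac(B):\Frac(\cO_Z)] = 2$; note $z \notin \cO_Z$ while $z^2 = x^3 + y^7 \in \cO_Z$. The key structural step is to prove the clean decomposition $B = \cO_Z \oplus \cO_Z z$ as $\cO_Z$-modules. Indeed, for $f \in B$ write $f = g_0 + g_1 z$ with $g_i \in \Frac(\cO_Z)$; applying $D$ gives $D(f) = g_1 c$, so $g_1 = c^{-1} D(f) \in B$ (here $c$ being a unit is essential), and since $g_1 \in \Frac(\cO_Z) \cap B = \cO_Z$ (normality of $\cO_Z$ and integrality of $B$ over it) we get $g_0 = f - g_1 z \in \cO_Z$ as well. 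Taking $f = x$ and $f = y$, I define $u, v \in \cO_Z$ to be their $\cO_Z$-components; since $g_1 = c^{-1}D(x) \in \fm$ and likewise for $y$, one has $u \equiv x$ and $v \equiv y \pmod{\fm^2}$.

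Finally I would prove that $\cO_Z = k[[u,v]]$, which gives regularity. The computation I expect to carry the argument is $\dim_k B/(u,v)B = 2$: modulo $(u,v)$ one has $x \equiv (c^{-1}D(x))\,z$ and $y \equiv (c^{-1}D(y))\,z$, so $\bar x, \bar y \in \bar\fm^{2}$, whence by Nakayama $\bar\fm = (\bar z)$; then $\bar z^2 = \bar x^3 + \bar y^7 \in (\bar z^3)$ forces $\bar z^2 = 0$, so $B/(u,v)B \cong k[\bar z]/(\bar z^2)$, of length $2$. By miracle flatness $B$ is free of rank $2$ over the regular ring $k[[u,v]]$, and the normal (hence Cohen--Macaulay) ring $\cO_Z$ is likewise free over $k[[u,v]]$ of some rank $r$; the multiplicativity $2 = 2r$ of ranks in the tower $k[[u,v]] \subseteq \cO_Z \subseteq B$ gives $r = 1$, so $\cO_Z = k[[u,v]]$ and $Z$ is smooth.

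The main obstacle is the passage to regularity in the last paragraph. The tempting invariants $x^2, y^2 \in \ker(D)$ are a red herring: they generate a subring of index $4$ in fraction fields which is itself a non-regular ($E_{12}$-type) ring, and a naive order-by-order correction of $x, y$ into $\ker(D)$ stalls because $D(z)$ being a unit reintroduces same-order terms. The device that resolves this is the exact module decomposition $B = \cO_Z \oplus \cO_Z z$, which produces the correct ``square-root'' parameters $u, v$ in one step; the remaining work is the length computation $\dim_k B/(u,v)B = 2$, where the precise exponents of $E_{12}$ enter only through the harmless fact that $z^2 = x^3 + y^7 \in \fm^3$.
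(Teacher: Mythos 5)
Your proposal is correct, and its first half necessarily coincides with the paper's proof: both read off from the relation $x^2 D(x) + y^6 D(y) = 0$ that $D(x), D(y) \in \fm$, both use $\Fix(D) = \emptyset$ to make $c := D(z)$ a unit, and your parameters $u, v$ are exactly the paper's $x' = x - D(z)^{-1}D(x)\,z$ and $y' = y - D(z)^{-1}D(y)\,z$. The divergence is in how one verifies that these two elements suffice. The paper stays inside $B^D$: it asserts that the maximal ideal $\fn$ of $B^D$ is generated by $x'$, $y'$, $z' = z^2$, and then eliminates the third generator via the identity $z' = x^3 + y^7 = x^2 x' + y^6 y'$ (which uses the relation $x^2 D(x) + y^6 D(y) = 0$ a second time), so that $\fn = (x', y')$ and the two-dimensional local ring $B^D$ is regular. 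You instead make the decomposition $B = \cO_Z \oplus \cO_Z z$ explicit, compute $\dim_k B/(u,v)B = 2$, and finish by miracle flatness and rank multiplicativity in the tower $k[[u,v]] \subseteq \cO_Z \subseteq B$. Your route buys two things. First, the decomposition step supplies precisely the justification the paper leaves implicit: that $x', y'$ lie in $B^D$ at all, and that $\fn = (x', y', z')$ --- both follow from $B = B^D \oplus B^D z$, which holds exactly because $D(z)$ is a unit. Second, your ending uses only $z^2 = x^3 + y^7 \in \fm^3$ rather than the precise identity $z' = x^2 x' + y^6 y'$, so it applies verbatim to any singularity $z^2 = f(x,y)$ with $f \in \fm^3$ admitting such a derivation. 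The cost is heavier machinery (Auslander--Buchsbaum and miracle flatness) where the paper needs only Nakayama's lemma.

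One small point to tighten: your computation in $B/(u,v)B$ shows $\bar{\fm} = (\bar{z})$ and $\bar{z}^2 = 0$, which gives length \emph{at most} $2$; equality requires $z \notin (u,v)B$. This is immediate --- if $z \in (u,v)B$ then $\fm = (x,y,z)B \subseteq (u,v)B$ would be generated by two elements, contradicting $\dim_k \fm/\fm^2 = 3$ for the EDP --- and even without it, a length of $1$ would make $B \cong k[[u,v]]$ regular, again a contradiction; but one of these lines should be said before invoking ``rank $2$'' in the flatness argument.
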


\begin{proof}
	We may assume $B = k[[x,y,z]] / (z^2 + x^3 + y^7)$.
	The derivation $D$ satisfies $x^2 D(x) + y^6 D(y) = 0$, hence $D(x) = y^6 b$ and $D(y) = x^2 b$ for some $b \in B$.
	In particular $D(x)$ and $D(y)$ belong to the maximal ideal $\fm$ of $B$.
	Since $\Fix(D) = \emptyset$ we have $D(z) \in B^*$.
	Then the maximal ideal $\fn$ of $B^D$ is generated by three elements 
	\[ 
	x' := x - D(z)^{-1} D(x) z, \; y' := y - D(z)^{-1} D(y) z , \; z' = z^2,
	\]
	and since we have a relation 
	\begin{align*}
	z' = z^2 = x^3 + y^7 
	& = x^2 (x' + D(z)^{-1} D(x) z) + y^6 (y' + D(z)^{-1} D(y) z) \\
	&= x^2 x' + y^6 y'
	\in \fn^2,
	\end{align*}
	it is in fact generated by the two elements $x'$ and $y'$.
	Thus $\bar{W}^D$ is smooth.
\end{proof}

\begin{lem}[cf. \cite{Schroer:K3-like}*{Propositions 2.3--2.4}] \label{lem:tangent of RDP} 
	Suppose $B$ is the localization or the completion at a closed point of a normal surface in characteristic $p > 0$.
	Assume the closed point is a singularity with $\dim_k \fm / \fm^2 = 3$, where $\fm \subset B$ is the maximal ideal.
	Suppose $B$ admits a $p$-closed derivation $D$ with $\Fix(D) = \emptyset$.
	Then, 
	\begin{enumerate}
	\item \label{lem:tangent of RDP:free} 
	The tangent module $T_B = \Der(B)$ is a free $B$-module (of rank $2$).
	\item \label{lem:tangent of RDP:canonical line} 
	An element $D' \in T_B$ has no fixed points 
	if and only if the projection of $D'$ to $T_B \otimes B/\fm$ belongs to the complement of a certain line.
	\item \label{lem:tangent of RDP:2} 
	Assume $p = 2$. Suppose $D'_1, D'_2 \in T_B$ generate $T_B$ and that $\Fix(D'_1) \neq \emptyset$.
	If $B$ is an RDP of type $A_{2n-1}$ for some $n \geq 1$ (resp.\ any other singularity), 
	then $D'_1$ is not of additive type (resp.\ not of multiplicative type).
	\end{enumerate}
\end{lem}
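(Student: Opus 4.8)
The plan is to analyze the purely inseparable quotient map $\pi\colon \Spec B \to Z := \Spec B^D$ attached to the fixed-point-free $p$-closed derivation $D$, to first show that $Z$ is smooth, and then to deduce all three assertions from the resulting sequence of derivation modules. For (\ref{lem:tangent of RDP:free}): since $\Fix(D) = \emptyset$ the ideal $(\Image(D))$ is the unit ideal, so after relabelling the generators $x,y,z$ of $\fm$ I may assume $D(z) \in B^*$. Following the computation in the proof of Proposition \ref{prop:image of derivation} (and \cite{Schroer:K3-like}*{Propositions 2.3--2.4}), the invariants $x' = x - D(z)^{-1}D(x)\,z$, $y' = y - D(z)^{-1}D(y)\,z$ and a suitable norm $z'$ of $z$ generate the maximal ideal $\fn$ of $B^D$, and the defining equation of the singularity yields a relation exhibiting $z' \in \fn^2$; hence $\fn = (x',y')$ and $Z$ is smooth. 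Granting this, $B$ is finite and, by miracle flatness ($B$ being Cohen--Macaulay of the same dimension as the regular ring $B^D$), free of rank $p$ over $B^D$, with $B = B^D[a]$, $a^p = c \in B^D$ and $D(a)\in B^*$. The $B$-dual of the relative cotangent sequence for $B^D\hookrightarrow B$ reads
\[ 0 \to \Der_{B^D}(B) \to \Der(B) \to \Der(B^D)\otimes_{B^D} B, \]
where $\Der_{B^D}(B) = B\cdot D$ is free of rank $1$ (the relation $a^p = c$ imposes no condition on $da$ in characteristic $p$, so $\Omega_{B/B^D}=B\,da$), and the image of the right-hand map is $\{\partial : \partial(c)=0\}\otimes_{B^D}B$, the syzygy module of $(c_u,c_v)$, which over the two-dimensional regular ring $B^D$ is free of rank $1$. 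As the quotient is free, hence projective, the sequence splits and $T_B = \Der(B)$ is free of rank $2$.

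For (\ref{lem:tangent of RDP:canonical line}) I would use the evaluation map $e\colon T_B \to (\fm/\fm^2)^{\vee}$, $D'\mapsto (b\mapsto \overline{D'(b)})$; it factors through $T_B\otimes_B k\cong k^2$ to give $\bar e\colon k^2 \to k^3$, and by definition $\Fix(D')=\emptyset$ if and only if $e(D')\neq 0$, i.e.\ $\overline{D'}\notin\ker\bar e$. It remains to see that $\ker\bar e$ is a line, equivalently that $\Image(\bar e)$ is one-dimensional. With $\fm = (u,v,a)$ and the basis $D$ (vertical) and the lift $\tilde H$ of the Hamiltonian derivation $c_v\partial_u - c_u\partial_v$ (horizontal) from (\ref{lem:tangent of RDP:free}), one computes $e(D)=(0,0,*)$ with $*=\overline{D(a)}\neq 0$ and $e(\tilde H)=(\overline{c_v},\,-\overline{c_u},\,\overline{\tilde H(a)})$. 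Since $B$ is singular one has $c\in\fn^2$ (otherwise $B^D[a]/(a^p-c)$ would be regular), so $c_u,c_v\in\fn$ and $e(\tilde H)$ also lies on the line $k\cdot(0,0,1)$. Hence $\Image(\bar e)$ is the single line spanned by $e(D)$, $\ker\bar e$ is a line, and the fixed-point-free locus is its complement.

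For (\ref{lem:tangent of RDP:2}) I would detect the type of $D_1'$ through its linearization at the fixed point. As $\Fix(D_1')\neq\emptyset$ contains the closed point, $D_1'(\fm)\subseteq\fm$, so $D_1'$ has a linear part $L\in\End(\fm/\fm^2)$; in characteristic $2$ one has $L^2=0$ when $D_1'$ is of additive type and $L^2=L\neq 0$ when $D_1'$ is of multiplicative type, so the type is governed by the nilpotency of $L$, and $L$ must preserve the tangent cone of $B$. The remaining task is the dichotomy: among the normal surface singularities admitting a fixed-point-free $p$-closed derivation at all, the odd chains $A_{2n-1}$ are exactly those whose equation is inseparable in one variable (in characteristic $2$ the form $xy+(z^n)^2$, carrying a residual $\mu_2$-action), which forces $L$ non-nilpotent and excludes additive type; for the remaining possibilities $D_{2n}^0$, $E_7^0$, $E_8^0$ and the EDP $\Ella$, the corresponding $L$ is nilpotent, excluding multiplicative type. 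I expect this verification in normal forms, together with the input from \cite{Matsumoto:k3alphap} on $\deg\isolatedfix{D_1'}$ and on which RDPs admit such derivations, to be the main obstacle of the lemma, the subtlety being to argue the dichotomy uniformly rather than singularity by singularity.
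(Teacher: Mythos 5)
Your argument for parts (\ref{lem:tangent of RDP:free}) and (\ref{lem:tangent of RDP:canonical line}) rests on the claim that $Z = \Spec B^D$ is smooth, and this claim is false in general. Smoothness of the quotient holds for the EDP $\Ella$ (Proposition \ref{prop:image of derivation}) and for $A_1$, $D_{2n}^0$, $E_7^0$, $E_8^0$, but \emph{not} for the other singularities covered by the lemma: by Proposition \ref{prop:enriques derivation}(\ref{item:quotient singularity}) (i.e.\ \cite{Matsumoto:k3alphap}*{Theorem 3.3(1)}), the fixed-point-free quotient of an RDP of type $A_{2n-1}$ with $n \geq 2$ (resp.\ $D_{2n+1}^0$, $E_6^0$) is again an RDP, of type $A_{n-1}$ (resp.\ $A_1$, $A_2$). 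Concretely, for $B = k[[x,y,z]]/(xy + z^4)$ of type $A_3$ with $D(x)=D(y)=0$, $D(z) \in B^*$, the invariant ring is $k[[x,y,w]]/(xy + w^2)$ with $w = z^2$: the defining equation expresses $xy \in \fn^2$, not $w \in \fn^2$, so $w$ survives as a third generator of $\fn$ and $B^D$ is an $A_1$-point. The computation you import from Proposition \ref{prop:image of derivation} works for $\Ella$ precisely because $z^2 = x^3 + y^7$ puts $z^p$ itself into $\fn^2$; that feature is special to that equation. Since your proof of (\ref{lem:tangent of RDP:free}) needs $B^D$ regular twice (miracle flatness over $B^D$, and freeness of the syzygy module of $(c_u,c_v)$ over a two-dimensional regular ring), and your proof of (\ref{lem:tangent of RDP:canonical line}) uses the resulting basis $\set{D, \tilde H}$, both collapse exactly in the cases $A_{2n-1}$, $D_{2n+1}^0$, $E_6^0$ --- and the $A_{2n-1}$ case is the one that carries the multiplicative canonical lines, so it cannot be discarded.

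The paper avoids the quotient entirely: by \cite{Matsumoto:k3alphap}*{Lemma 2.4} one chooses generators with $D(x)=D(y)=0$, so that $B = k[[x,y,z]]/(F)$ with $F \in k[[x,y,z^p]]$, hence $F_z = 0$; then $T_B$ is the syzygy module of $(F_x,F_y,0)$ \emph{over $B$}, and since $F_x, F_y$ is a regular sequence (isolated singularity plus Cohen--Macaulay) it is free with basis $\partial_z$ and $(F_y,-F_x,0)$. No regularity of $B^D$ is needed, and (\ref{lem:tangent of RDP:canonical line}) is immediate ($g_1\partial_z + g_2(F_y,-F_x,0)$ is fixed-point-free iff $g_1 \in B^*$, because $F_x,F_y \in \fm$). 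For (\ref{lem:tangent of RDP:2}) you explicitly defer the key dichotomy, and the criterion you propose (``equation inseparable in one variable'') does not distinguish anything: in adapted coordinates \emph{every} singularity under consideration has $F \in k[[x,y,z^p]]$. The correct invariant is whether the cross-term $F_{xy}$ is a unit, which happens exactly for $A_{2n-1}$; the paper then concludes by computing $(D'_1)^2(x) = \bigl(g_2 F_{xy} + (\text{terms in }\fm)\bigr)\, D'_1(x)$ and reading off that the proportionality factor is a unit (excluding additive type) or lies in $\fm$ (excluding multiplicative type). Your linearization idea could be made to work along similar lines, but as written it is an expectation, not a proof, and it additionally needs the (unaddressed) fact that a nonzero multiplicative derivation cannot have vanishing linear part.
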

	(\ref{lem:tangent of RDP:free}) and (\ref{lem:tangent of RDP:canonical line}) 
	slightly generalize
	the results of Schr\"oer \cite{Schroer:K3-like}*{Propositions 2.3--2.4}, in which $B$ is assumed to be of the form $k[[x,y,z]] / (z^p - f(x,y))$,
	and the proof is parallel.
	(\ref{lem:tangent of RDP:free}) also follows from \cite{Ekedahl--Hyland--Shepherd-Barron}*{Corollary 3.7(2)}.
	(\ref{lem:tangent of RDP:2}) generalizes the case of $A_1$ proved in \cite{Ekedahl--Hyland--Shepherd-Barron}*{Lemma 7.5}.
\begin{proof}
	By \cite{Matsumoto:k3alphap}*{Lemma 2.8}, 
	we can take $x,y,z \in \fm$ generating $\fm$ and satisfying $D(x) = D(y) = 0$.
	We may assume $B$ is complete.
	Hence we may assume 
	$B = k[[x,y,z]] / (F)$ with $F \in k[[x,y,z^p]]$.
	The tangent module $T_B$ can be identified with the $B$-module $\set{(a,b,c) \in B^3 \mid a F_x + b F_y + c F_z = 0}$
	by $D \mapsto (D(x), D(y), D(z))$.
	Here $F_x, F_y, F_z$ are the images in $B$ of the partial derivatives of $F$. 

	Since $F \in k[x,y,z^p]$ we have $F_z = 0$.
	Since the singularity is isolated (since $B$ is normal), the ideal $(F_x, F_y, F_z) = (F_x, F_y)$ is of height $2$. 
	Since $B$ is a hypersurface singularity, hence Cohen--Macaulay, this implies that $F_x, F_y$ is a regular sequence.
	Hence $T_B$ has a basis $D_1 = (0,0,1), D_2 = (F_y, - F_x, 0)$.
	This shows (\ref{lem:tangent of RDP:free}).
	Clearly $g_1 D_1 + g_2 D_2$ ($g_1, g_2 \in B$) has no fixed points if and only if $g_1 \in B^*$.
	This shows (\ref{lem:tangent of RDP:canonical line}).

	Now assume $p = 2$ and let $D'_1, D'_2$ be as in (\ref{lem:tangent of RDP:2}).
	We have $D'_1 = g_1 D_1 + g_2 D_2$ with $g_1 \in \fm$, $g_2 \in B^*$.
	Then we have $D'_1(x) = g_2 F_y \neq 0$ and 
	\begin{align*}
	 (D'_1)^2(x)  = D'_1 (g_2 F_y) 
	             &= (g_2 F_y)_x g_2 F_y + (g_2 F_y)_y g_2 F_x + (g_2 F_y)_z g_1
	          \\ &= \bigl( g_2 F_{yx} + (g_2)_x F_y + (g_2)_y F_x + (g_2)_z g_2^{-1} g_1 \bigr) D'_1(x).
	\end{align*}
	Note that $B$ is of type $A_{2n-1}$ for some $n \geq 1$ if and only if $F_{xy} \in B^*$
	(by \cite{Matsumoto:k3alphap}*{Theorem 3.3(1)}, $B$ cannot be of type $A_{2n}$).
	Assume this is the case (resp.\ not the case).
	Then since $F_x, F_y, g_1 \in \fm$ and $g_2 \in B^*$, the coefficient of $D'_1(x)$ is an element of $B^*$ (resp.\ $\fm$),
	in particular not equal to $0$ (resp.\ $1$).
\end{proof}

\subsection{Derivations on K3-like surfaces}

\begin{rem}
	The derivation corresponding to the canonical $\mu_2$- or $\alpha_2$-covering of a classical or supersingular Enriques surface is fixed-point-free.
	This follows from Bombieri--Mumford's construction \cite{Bombieri--Mumford:III}*{Corollary in Section 3}.
\end{rem}

We also have a partial converse:
\begin{prop}[cf.\ \cite{Matsumoto:k3alphap}*{Sections 3--4} and \cite{Schroer:K3-like}*{Proposition 5.1}] \label{prop:enriques derivation}
	Let $\bar{Y}$ be a normal K3-like surface in characteristic $2$ 
	with only RDPs or EDPs of type $\Ella$ as singularities.
	Let $D$ be a derivation of multiplicative (resp.\ additive) type satisfying $\Fix(D) = \emptyset$.
	Then,
	
	\begin{enumerate}
	\item \label{item:quotient}
	The quotient $\bar{X} := \bar{Y}^D$ is a classical (resp.\ supersingular) RDP Enriques surface,
	and $\bar{Y} \times_{\bar{X}} X$ is the canonical covering of the minimal resolution $X$ of $\bar{X}$.
	\item \label{item:quotient singularity}
	Let $\map{\bar{\pi}}{\bar{Y}}{\bar{X}}$ be the quotient map.
	If $w \in \bar{Y}$ is a closed point that is either a smooth point, an RDP of type $A_1$, $D_{2n}^0$, $E_7^0$, or $E_8^0$, or an EDP of type $\Ella$,
	then $\bar{\pi}(w)$ is smooth.
	If $w$ is an RDP of type $A_{2n-1}$ ($n \geq 2$), $D_{2n+1}^0$ ($n \geq 2$), or $E_6^0$,
	then $\bar{\pi}(w)$ is an RDP of type $A_{n-1}$, $A_1$, $A_2$ respectively.
	No other types of RDPs can appear on $\bar{Y}$.
	\item \label{item:smooth}
	If $\Sing(\bar{Y})$ has only RDPs, 
	then the total index of the RDPs on $\bar{Y}$ is $\geq 12$. and the equality holds if and only if $\bar{X}$ is a smooth Enriques surface.
	\end{enumerate}
\end{prop}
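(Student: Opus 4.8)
The plan is to treat the three assertions in order, leaning on the local results already assembled. Throughout, write $\bar\pi \colon \bar{Y} \to \bar{X} = \bar{Y}^D$ for the quotient map; it is finite flat of degree $2$, and $\bar{X}$ is normal since it is the quotient of a normal surface by a $p$-closed derivation. Because $\Fix(D) = \emptyset$, the divisorial fixed part $\divisorialfix{D}$ vanishes.

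For (\ref{item:quotient}), I would first work on the smooth locus $\bar{Y}^{\sm}$, where $D$ is fixed-point-free and $\bar\pi$ restricts to a $\mu_2$- (resp.\ $\alpha_2$-) torsor. The Rudakov--Shafarevich formula then gives $K_{\bar{Y}} \sim \bar\pi^* K_{\bar{X}}$ on this locus, and since $\omega_{\bar{Y}} \cong \cO_{\bar{Y}}$ this forces $\omega_{\bar{X}}$ to be numerically trivial (and $2$-torsion). I would next compute cohomology from the two-step filtration of $\bar\pi_* \cO_{\bar{Y}}$: this sheaf is an extension of a $2$-torsion invertible sheaf $\cL^{-1}$ by $\cO_{\bar{X}}$, whence $\chi(\cO_{\bar{X}}) = \tfrac12 \chi(\cO_{\bar{Y}}) = 1$. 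Combined with $K_X$ numerically trivial and $b_1 = 0$ inherited from $\bar{Y}$, the Bombieri--Mumford classification identifies the minimal resolution $X$ of $\bar{X}$ as a smooth Enriques surface (it is not K3, as $\chi = 1 \neq 2$). The type is read off from $\Pictau$: the torsor structure and \cite{Schroer:K3-like}*{Proposition 4.1} give $\Pictau_{\bar{X}} \cong \dual{G}$ with $G = \mu_2$ (resp.\ $\alpha_2$), i.e.\ $\bZ/2\bZ$ (resp.\ $\alpha_2$), so $\bar{X}$ is classical (resp.\ supersingular). Finally, since resolving rational double points does not change $\Pictau$, pulling $\bar\pi$ back along $X \to \bar{X}$ produces the unique nontrivial $\dual{(\Pictau_X)}$-torsor, which is by definition the canonical covering; this identifies $\bar{Y} \times_{\bar{X}} X$.

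Assertion (\ref{item:quotient singularity}) is purely local, and I would import the cited computations. At a smooth point the quotient by a fixed-point-free $\mu_2$ or $\alpha_2$ action is smooth; at an EDP of type $\Ella$ smoothness of the quotient is precisely Proposition \ref{prop:image of derivation}. For the remaining RDP cases I would invoke Lemma \ref{lem:tangent of RDP}: after choosing coordinates with $D(x) = D(y) = 0$ one writes the singularity as $k[[x,y,z]]/(F)$ with $F \in k[[x,y,z^2]]$ and computes the invariant subring $B^D$ explicitly for each type, reading off the quotient singularity. The resulting table ($A_1, D_{2n}^0, E_7^0, E_8^0 \to$ smooth; $A_{2n-1} \to A_{n-1}$; $D_{2n+1}^0 \to A_1$; $E_6^0 \to A_2$) and the statement that no other RDP admits a fixed-point-free $p$-closed derivation are the corresponding assertions of \cite{Matsumoto:k3alphap}.

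For (\ref{item:smooth}), assume $\Sing(\bar{Y})$ consists only of RDPs, so the minimal resolution $Y$ is a smooth K3 surface with $\deg c_2(Y) = 24$, and lift $D$ to a rational derivation $\tilde{D}$ on $Y$. Since $K_Y \sim 0$, the Katsura--Takeda formula (Theorem \ref{thm:c2 derivation}) reduces to $24 = \deg \isolatedfix{\tilde{D}} - \divisorialfix{\tilde{D}}^2$, and as $D$ is fixed-point-free away from $\Sing(\bar{Y})$ the right-hand side is a sum of purely local contributions, one per RDP. The key local input, again from \cite{Matsumoto:k3alphap}, is that each good RDP of type $X_n$ contributes exactly $2n$, twice its index, while each bad type contributes strictly less than twice its index. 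By (\ref{item:quotient singularity}), $\bar{X}$ is smooth exactly when every singularity of $\bar{Y}$ is of good type; in that case $24 = 2\sum n$, forcing total index $12$, and if any bad RDP were present the same identity would give $24 < 2\sum n$, so that the total index exceeds $12$. Thus total index $12$ is equivalent to smoothness of $\bar{X}$. The main obstacle is this last piece of bookkeeping: one must pin down, for every RDP type, the exact contribution of the resolved exceptional configuration to $\deg \isolatedfix{\tilde{D}} - \divisorialfix{\tilde{D}}^2$ and verify that it equals twice the index precisely for the good types, which is where the delicate interplay between the isolated and divisorial fixed parts of $\tilde{D}$ along the exceptional curves enters and for which the explicit normal forms of \cite{Matsumoto:k3alphap} are indispensable.
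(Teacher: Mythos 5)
Your parts (\ref{item:quotient}) and (\ref{item:quotient singularity}) follow essentially the paper's own route: (\ref{item:quotient singularity}) is the same appeal to \cite{Matsumoto:k3alphap}*{Theorem 3.3(1)} for RDPs and to Proposition \ref{prop:image of derivation} for the EDP, and (\ref{item:quotient}) is the same eigenspace/extension analysis of $\bar{\pi}_*\cO_{\bar{Y}}$ yielding $\chi(\cO)=1$, merely carried out on the singular $\bar{X}$ instead of after base change to the resolution as the paper does. Your identification of classical vs.\ supersingular via the torsor class and \cite{Schroer:K3-like}*{Proposition 4.1} is a legitimate repackaging of the paper's more hands-on argument (nontrivial $2$-torsion bundle, resp.\ nonzero extension class killed by Frobenius), but you must add the observation that the torsor $\bar{Y}\to\bar{X}$, and its pullback to $X$, is \emph{nontrivial}; this is exactly where the paper uses that a trivial torsor would force $\bar{Y}$ (resp.\ $\bar{Y}\times_{\bar{X}}X$) to be non-reduced, which contradicts normality.

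The genuine gap is in (\ref{item:smooth}). Your argument rests on a local lemma: that the contribution of each RDP $w\in\Sing(\bar{Y})$ to $\deg\isolatedfix{\tilde{D}}-\divisorialfix{\tilde{D}}^2$ on the minimal resolution $Y$ equals twice its index exactly for the types with smooth image, and is strictly smaller for the other types. You attribute this to \cite{Matsumoto:k3alphap}, but that is not what its Corollary 3.9 (or Theorem 3.3) provides: those results compute $\deg\isolatedfix{D'}$ for a derivation $D'$ on a \emph{smooth} surface whose \emph{quotient} carries the given RDP/EDP, whereas you need the fixed locus of the lift $\tilde{D}$ of $D$ to the resolution of the singular \emph{cover} $\bar{Y}$ --- a different computation, which would have to be done type by type along the exceptional configurations, and which you yourself flag as ``the main obstacle'' without carrying it out. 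So as written, (\ref{item:smooth}) is conditional on an unproved (though numerically consistent) claim. The paper avoids this entirely: since $\bar{\pi}$ is purely inseparable it is a universal homeomorphism, so $b_2(\bar{Y})=b_2(\bar{X})$ in $\ell$-adic cohomology; for RDP surfaces $b_2(\bar{Y})=22-\sum n_i$ and $b_2(\bar{X})=10-\sum m_j$ (the resolutions being a K3 and, by (\ref{item:quotient}), an Enriques surface), whence $\sum n_i = 12+\sum m_j$, which gives (\ref{item:smooth}) at once and in both directions. If you want to salvage a fixed-point-theoretic proof, the workable variant is to apply Katsura--Takeda and \cite{Matsumoto:k3alphap}*{Corollary 3.9} to the \emph{dual} rational derivation on the smooth Enriques surface $X$ (as the paper does in Propositions \ref{prop:nexists:cl:EDP} and \ref{prop:nexists:ss}); but that only handles the direction where $\bar{X}$ is already smooth, so the Betti-number argument is the cleaner fix.
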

\begin{proof}
	(\ref{item:quotient singularity})
	This follows from \cite{Matsumoto:k3alphap}*{Theorem 3.3(1)} if $w$ is a smooth point or an RDP,
	and from Proposition \ref{prop:image of derivation} if $w$ is an EDP.

	(\ref{item:quotient})
	By (\ref{item:quotient singularity}), $\bar{X}$ has only RDPs as singularities (if any).
	Let $X \to \bar{X}$ be the minimal resolution.
	By \cite{Matsumoto:k3alphap}*{Theorem 3.3(1)},
	$\bar{Y} \times_{\bar{X}} X$ is also a normal K3-like surface with only RDPs and EDPs of type $\Ella$.
	Moreover $D$ extends to a regular derivation $\tilde{D} := D \otimes 1$ on $\bar{Y} \times_{\bar{X}} X$ of multiplicative (resp.\ additive) type 
	with $\Fix(\tilde{D}) = \emptyset$ and with quotient $(\bar{Y} \times_{\bar{X}} X)^{\tilde{D}} = X$.
	Hence we may assume $X = \bar{X}$ is smooth. 
	As in \cite{Matsumoto:k3alphap}*{proof of Proposition 4.1}, we have $K_{X} \equiv 0$,
	where $\equiv$ is the numerical equivalence.
	Hence, to show $X$ that is an Enriques surface, it suffices to show $\chi(\cO_X) = 1$.

Suppose $D$ is of multiplicative type. 
Then we have a decomposition $\bar{\pi}_* \cO_{\bar{Y}} = \bigoplus_{i \in \bZ/2\bZ} (\bar{\pi}_* \cO_{\bar{Y}})_i$
to eigenspaces of $D$ of eigenvalues $i \in \bZ/2\bZ$.
Since $\Fix(D) = \emptyset$, $(\bar{\pi}_* \cO_{\bar{Y}})_1 = \Image(D)$ is an invertible sheaf 
locally generated by an element of $\bar{\pi}_* \cO_{\bar{Y}}^* \cap (\bar{\pi}_* \cO_{\bar{Y}})_1$
and satisfies $((\bar{\pi}_* \cO_{\bar{Y}})_1)^{\otimes 2} \cong \cO_X$,
hence $(\bar{\pi}_* \cO_{\bar{Y}})_1$ is a $2$-torsion class in $\Pic(X)$.
In particular we have $\chi((\bar{\pi}_* \cO_{\bar{Y}})_1) = \chi((\bar{\pi}_* \cO_{\bar{Y}})_0) = \chi(\cO_X)$ by Riemann--Roch, 
hence $\chi(\cO_X) = \chi(\cO_{\bar{Y}}) / 2 = 1$ and $X$ is an Enriques surface.
If the class $(\bar{\pi}_* \cO_{\bar{Y}})_1 \in \Pic(X)$ is trivial, 
then $1 \in H^0(\bar{Y}, \cO_{\bar{Y}})$ would have a nontrivial square root
and $\bar{Y}$ would be non-reduced, which is absurd.
Therefore $\Pic(X)$ has nontrivial torsion, hence $X$ is classical,
and $\bar{Y}$ is the canonical covering of $X$.

Suppose $D$ is of additive type.
Since $\Fix(D) = \emptyset$, we have $\Image(D) = \cO_X$, 
and the extension
\[ 0 \to \cO_X \to \bar{\pi}_* \cO_{\bar{Y}} \namedto{D} \cO_X \to 0 \]
is non-split (otherwise $\bar{Y}$ would be non-reduced).
We obtain $\chi(\cO_X) = \chi(\cO_{\bar{Y}}) / 2 = 1$, hence $X$ is an Enriques surface.
Since $\bar{Y} \to X$ is purely inseparable, the Frobenius image $F(e)$ of the nontrivial class $e \in H^1(X, \cO_X)$ of this extension is zero.
This shows that $X$ is supersingular
and that $\bar{Y}$ is the canonical covering of $X$.
(cf.\ \cite{Bombieri--Mumford:III}*{Corollary in Section 3}.)

	(\ref{item:smooth})
	Assume $\Sing(\bar{Y})$ has only RDPs.
	Let $n_i$ and $m_j$ be the indices of the RDPs on $\bar{Y}$ and $\bar{X}$ respectively.
	Then we have $b_2(\bar{Y}) = b_2(Y) - \sum n_i = 22 - \sum n_i$ and
	$b_2(\bar{X}) = b_2(X) - \sum m_j = 10 - \sum m_j$,
	where $Y \to \bar{Y}$ and $X \to \bar{X}$ are the minimal resolutions.
	Since $\bar{\pi}$ is purely inseparable we have $b_2(\bar{Y}) = b_2(\bar{X})$.
	Hence $\sum n_i = 12 + \sum m_j \geq 12$ and the equality is equivalent to $\sum m_j = 0$. 
\end{proof}

We slightly generalize the results of Ekedahl--Hyland--Shepherd-Barron and Schr\"oer 
on the tangent sheaf of the canonical covering and the fixed loci of global sections.

\begin{prop}[cf.\ \cite{Ekedahl--Hyland--Shepherd-Barron}*{Corollary 3.7(3)}, \cite{Schroer:K3-like}*{Theorem 6.4}] \label{prop:T free}
	Suppose $\bar{Y}$ and $D$ are as in Proposition \ref{prop:enriques derivation}.
	Then,
	\begin{enumerate}
	\item \label{prop:T free:T free} 
	The tangent sheaf $T_{\bar{Y}}$ is free (of rank $2$).
	\item \label{prop:T free:canonical line} 
	For each $w \in \Sing(\bar{Y})$ there exists a line $\fl(w) \subset H^0(\bar{Y}, T_{\bar{Y}})$ 
	such that, for $D' \in H^0(\bar{Y}, T_{\bar{Y}})$,
	we have $w \in \Fix(D')$ if and only if $D' \in \fl(w)$.
	\item \label{prop:T free:canonical lines} 
	An element $D' \in H^0(\bar{Y}, T_{\bar{Y}})$ is fixed-point-free 
	if and only if it belongs to the complement of the (finite) union of the lines $\fl(w)$. 
	\end{enumerate}
\end{prop}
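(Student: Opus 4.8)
The plan is to work locally on $\bar{Y}$ and globalize, using freeness of the tangent module at each singularity (Lemma \ref{lem:tangent of RDP}) together with the hypothesis that the global fixed-point-free derivation $D$ restricts, near each point, to a nonzero $p$-closed derivation with no fixed points. First I would establish (\ref{prop:T free:T free}). Away from $\Sing(\bar{Y})$ the surface is smooth, so $T_{\bar{Y}}$ is locally free of rank $2$ there. At each singular point $w$, the completed local ring $\widehat{\cO}_{\bar{Y},w}$ satisfies the hypotheses of Lemma \ref{lem:tangent of RDP}: it is normal, the point is a singularity, and $\dim_k \fm/\fm^2 = 3$ (this holds for RDPs and for the EDP of type $\Ella$, both being hypersurface double points), and it carries the $p$-closed fixed-point-free derivation obtained by restricting $D$. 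Hence by Lemma \ref{lem:tangent of RDP}(\ref{lem:tangent of RDP:free}) the stalk $T_{\bar{Y},w}$ is free of rank $2$. Thus $T_{\bar{Y}}$ is locally free (i.e.\ a vector bundle) of rank $2$ everywhere. To upgrade ``locally free'' to ``free,'' I would invoke the fact (already available from Theorem \ref{thm:EHSB} in our setting, or provable directly) that on a K3-like $\bar{Y}$ with $h^0(\cO_{\bar{Y}}) = h^2(\cO_{\bar{Y}}) = 1$ the determinant $\det T_{\bar{Y}} = \omega_{\bar{Y}}^{-1} \cong \cO_{\bar{Y}}$ is trivial and the two everywhere-independent global sections $D$ and a second independent derivation (existence guaranteed since $\dim H^0(T_{\bar{Y}}) = 2$) trivialize the bundle; concretely, the section $D \wedge D'$ of $\det T_{\bar{Y}} \cong \cO_{\bar{Y}}$ is nowhere zero because $D$ and $D'$ are independent in each fiber, giving a nowhere-vanishing section and hence a trivialization.

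Next I would prove (\ref{prop:T free:canonical line}). Fix $w \in \Sing(\bar{Y})$ and work in $T_{\bar{Y},w}$, a free module of rank $2$ with a $p$-closed fixed-point-free $D$ in it. By Lemma \ref{lem:tangent of RDP}(\ref{lem:tangent of RDP:canonical line}), a local derivation $D'$ is fixed-point-free at $w$ precisely when its image in the fiber $T_{\bar{Y},w} \otimes k(w)$ avoids a distinguished line; equivalently, $w \in \Fix(D')$ if and only if the fiber image of $D'$ lies in that line. Since $T_{\bar{Y}}$ is free, the restriction map $H^0(\bar{Y}, T_{\bar{Y}}) \to T_{\bar{Y},w} \otimes k(w)$ is an isomorphism (both are $2$-dimensional, and the composite with evaluation of a trivializing frame is an isomorphism). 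I would define $\fl(w) \subset H^0(\bar{Y}, T_{\bar{Y}})$ to be the preimage of the distinguished line; then $w \in \Fix(D')$ if and only if $D' \in \fl(w)$, which is exactly (\ref{prop:T free:canonical line}).

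Finally, (\ref{prop:T free:canonical lines}) is a direct consequence. A global section $D'$ is fixed-point-free on all of $\bar{Y}$ if and only if it has no fixed point at every $w$; away from $\Sing(\bar{Y})$ there is no constraint because $T_{\bar{Y}}$ is locally free and a generic section of a rank-$2$ bundle on a surface is nonvanishing in codimension one, but more to the point the fixed locus of any $D'$ can only be supported where $D'$ degenerates, and on the smooth part one checks as usual that fixed points of a derivation spanning the tangent space together with a second independent section cannot occur once $D'$ avoids the singular lines; so the only obstructions to being fixed-point-free are the conditions $D' \notin \fl(w)$ for $w \in \Sing(\bar{Y})$. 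Hence $D'$ is fixed-point-free if and only if $D'$ lies in the complement of $\bigcup_{w \in \Sing(\bar{Y})} \fl(w)$, a finite union of lines since $\Sing(\bar{Y})$ is finite. The main obstacle I anticipate is the passage from local freeness to genuine freeness and the verification that no fixed points are introduced along the smooth locus; both are handled by exhibiting the explicit nowhere-vanishing section $D$ (whose fixed-point-freeness is the standing hypothesis) and arguing that, $T_{\bar{Y}}$ being trivial, the fixed locus of $D'$ is cut out by the two coordinate functions of $D'$ in a global frame, so it is empty away from the singular points exactly when those coordinates have no common zero there — which holds automatically on the smooth locus once the bundle is trivialized by $D$.
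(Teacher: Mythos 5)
Your proofs of parts (\ref{prop:T free:canonical line}) and (\ref{prop:T free:canonical lines}) follow essentially the paper's route (the evaluation isomorphism $H^0(\bar{Y}, T_{\bar{Y}}) \isomto T_{\bar{Y},w} \otimes k(w)$ combined with Lemma \ref{lem:tangent of RDP}(\ref{lem:tangent of RDP:canonical line})), but your proof of part (\ref{prop:T free:T free}) --- on which the other two parts rest --- has a genuine gap. To pass from local freeness to freeness you invoke ``a second independent derivation $D'$ (existence guaranteed since $\dim H^0(\bar{Y}, T_{\bar{Y}}) = 2$)'' which is ``independent from $D$ in each fiber.'' This is circular twice over: the equality $\dim H^0(\bar{Y}, T_{\bar{Y}}) = 2$ is a \emph{consequence} of freeness, not something available beforehand; and even granting a second linearly independent global section, linear independence in $H^0$ does not give pointwise independence in every fiber --- having two sections that are independent in every fiber is precisely equivalent to the triviality you are trying to prove. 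Nor can you fall back on Theorem \ref{thm:EHSB}: the $\bar{Y}$ of Proposition \ref{prop:enriques derivation} is not assumed to be the canonical covering of a \emph{smooth} Enriques surface (its quotient $\bar{X} = \bar{Y}^D$ may be a singular RDP Enriques surface), and the point of Proposition \ref{prop:T free} is exactly to extend the Ekedahl--Hyland--Shepherd-Barron/Schr\"oer statements to this setting.

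The missing idea is the extension argument. Since $\Fix(D) = \emptyset$, the section $D$ is nonzero in every fiber, so the quotient $L := T_{\bar{Y}} / D \cO_{\bar{Y}}$ is invertible and one has an exact sequence $0 \to \cO_{\bar{Y}} \namedto{D} T_{\bar{Y}} \to L \to 0$. Comparing Chern classes (using $K_{\bar{Y}^{\sm}} = 0$, i.e.\ $\det T_{\bar{Y}} \cong \cO_{\bar{Y}}$, which you did observe) gives $L \cong \cO_{\bar{Y}}$, and then the extension class lies in $\Ext^1(\cO_{\bar{Y}}, \cO_{\bar{Y}}) = H^1(\bar{Y}, \cO_{\bar{Y}}) = 0$. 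This vanishing, which your argument never uses, is what actually produces a second global section complementary to $D$ in every fiber, hence the trivialization $T_{\bar{Y}} \cong \cO_{\bar{Y}}^{\oplus 2}$. (A smaller point: in part (\ref{prop:T free:canonical lines}), your claim that the coordinates of $D'$ in a global frame have no common zero on the smooth locus is not ``automatic''; it holds because those coordinates are global functions on the proper surface $\bar{Y}$ with $h^0(\cO_{\bar{Y}}) = 1$, hence constants, so any nonzero $D'$ is nonzero in every fiber over the smooth locus.)
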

	Again, if we assume moreover $X = Y^D$ is smooth,
	then under some assumption on $\bar{Y}$ 
	the assertions follow from \cite{Schroer:K3-like}*{Proposition 6.1 and Theorem 6.4},
	and the proofs of  
	(\ref{prop:T free:canonical line}) and 
	(\ref{prop:T free:canonical lines})
	are parallel.

\begin{proof}
(\ref{prop:T free:T free})
$T_{\bar{Y}}$ is locally free by Lemma \ref{lem:tangent of RDP}(\ref{lem:tangent of RDP:free}).
Then we can apply the proof of \cite{Ekedahl--Hyland--Shepherd-Barron}*{Corollary 3.7(3)} as follows
(although it is stated for smooth Enriques surfaces). 
Since $D$ is fixed-point-free,
the quotient $L := T_{\bar{Y}} / \cO_{\bar{Y}} D$ is an invertible sheaf.
Since $K_{\bar{Y}^{\sm}} = 0$, comparing the Chern classes we obtain $L \cong \cO_{\bar{Y}}$.
Since $H^1(\bar{Y}, \cO) = 0$, the extension is trivial.

(\ref{prop:T free:canonical line}),
(\ref{prop:T free:canonical lines})
We can apply the proof of \cite{Schroer:K3-like}*{Theorem 6.4} as follows
(although it is stated for smooth Enriques surfaces). 
For each closed point $w \in \bar{Y}$, the composite 
$H^0(\bar{Y}, T_{\bar{Y}}) \to T_{\bar{Y},w} \to T_{\bar{Y},w} \otimes k(w)$ is an isomorphism of restricted Lie algebras.
If $w$ is a smooth point, then $w \in \Fix(D')$ if and only if $D' = 0$.
If $w$ is a singular point, 
then $w \in \Fix(D')$ if and only if $D' \in \fl(w)$,
where $\fl(w) \subset H^0(\bar{Y}, T_{\bar{Y}})$ is the inverse image 
of the line of $T_{\bar{Y},w} \otimes k(w)$
mentioned in Lemma \ref{lem:tangent of RDP}(\ref{lem:tangent of RDP:canonical line}).
Therefore $\Fix(D') = \emptyset$ if and only if $D' \not\in \set{0} \cup \bigcup_{w \in \Sing(\bar{Y})} \fl(w)$.
Since $\bar{Y}$ has at least one singular point (by Proposition \ref{prop:enriques derivation}(\ref{item:smooth})), we have $0 \in \bigcup \fl(w)$.
\end{proof}

Following Schr\"oer \cite{Schroer:K3-like}*{Section 2}, 
we call this line $\fl(w) \subset \fg = H^0(\bar{Y}, T_{\bar{Y}})$ to be the \emph{canonical line} attached to $w \in \Sing(\bar{Y})$.

\begin{cor} \label{cor:canonical line}
	Suppose $\bar{Y}$ and $D$ are as in Proposition \ref{prop:enriques derivation}.
	If $w \in \Sing(\bar{Y})$ is an RDP of type $A_{2n-1}$ for some $n \geq 1$ (resp.\ any other singularity),
	then the attached canonical line $\fl(w) \subset \fg$ is of multiplicative type (resp.\ of additive type).
\end{cor}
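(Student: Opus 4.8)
The plan is to work in the local ring $B = \cO_{\bar Y, w}$ (or its completion) and to deduce the statement from Lemma~\ref{lem:tangent of RDP}(\ref{lem:tangent of RDP:2}), whose hypotheses I will arrange to hold. Let $D'_1$ be a generator of the canonical line $\fl(w) \subset \fg = H^0(\bar Y, T_{\bar Y})$. The decisive preliminary observation is that $\fl(w)$ is stable under the $p$-th power map of the restricted Lie algebra $\fg$, so that $D'_1$ is automatically $p$-closed in the Lie-theoretic sense. Indeed, $D'_1 \in \fl(w)$ gives $w \in \Fix(D'_1)$ by Proposition~\ref{prop:T free}(\ref{prop:T free:canonical line}), which means that $D'_1$ sends $B$ into its maximal ideal $\fm$; iterating, $(D'_1)^p$ also sends $B$ into $\fm$, so $w \in \Fix((D'_1)^p)$ and hence, by the same characterization, $(D'_1)^p \in \fl(w)$. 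As $\fl(w)$ is one-dimensional, I obtain $(D'_1)^p = \lambda D'_1$ for some scalar $\lambda \in k$.

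Next I would supply the second generator required by the lemma. Choosing any $D'_2 \in \fg \setminus \fl(w)$ makes $\{D'_1, D'_2\}$ a $k$-basis of $\fg$. Since $T_{\bar Y}$ is free by Proposition~\ref{prop:T free}(\ref{prop:T free:T free}), evaluation identifies $\fg \otimes_k \cO_{\bar Y}$ with $T_{\bar Y}$, so a $k$-basis of $\fg$ is a $B$-basis of the stalk $T_B$; in particular $D'_1, D'_2$ generate $T_B$. As $\Fix(D'_1)$ contains $w$ and is thus nonempty, and as the fixed-point-free $p$-closed derivation $D$ of Proposition~\ref{prop:enriques derivation} provides the $p$-closed fixed-point-free derivation demanded by Lemma~\ref{lem:tangent of RDP}, that lemma's part (\ref{lem:tangent of RDP:2}) applies to $D'_1$. (EDPs of type $\Ella$ fall under the ``any other singularity'' clause of that lemma, since its proof only detects whether the relevant mixed partial is a unit.)

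It then remains to match the normalized dichotomy of the lemma --- phrased via $(D'_1)^p = 0$ and $(D'_1)^p = D'_1$ --- with the eigenvalue dichotomy $\lambda = 0$ versus $\lambda \neq 0$. If $w$ is an RDP of type $A_{2n-1}$, the lemma gives that $D'_1$ is not of additive type, i.e.\ $(D'_1)^p \neq 0$; with $(D'_1)^p = \lambda D'_1$ this forces $\lambda \neq 0$, so $\fl(w)$ is of multiplicative type. If $w$ is any other singularity, I argue by contradiction: were $\lambda \neq 0$, then since $p = 2$ the rescaled generator $D''_1 := \lambda^{-1} D'_1 \in \fl(w)$ would satisfy $(D''_1)^p = \lambda^{-2}(D'_1)^p = \lambda^{-1} D'_1 = D''_1$, hence be of multiplicative type in the normalized sense; but $D''_1$ still fixes $w$ and generates $T_B$ together with $D'_2$, which Lemma~\ref{lem:tangent of RDP}(\ref{lem:tangent of RDP:2}) forbids for such $w$. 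Therefore $\lambda = 0$ and $\fl(w)$ is of additive type.

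The genuine content sits in Lemma~\ref{lem:tangent of RDP}(\ref{lem:tangent of RDP:2}), which is already at our disposal; the one step I would treat with care is the stability of $\fl(w)$ under the $p$-th power, since this is exactly what converts ``not of one type'' into ``of the other type''. It rests on the sharp characterization of $\fl(w)$ as the locus $\{D' : w \in \Fix(D')\}$ in Proposition~\ref{prop:T free}(\ref{prop:T free:canonical line}); the subsequent reconciliation of the two notions of multiplicative and additive type is then harmless in characteristic $2$, where $p-1 = 1$ makes the rescaling completely explicit.
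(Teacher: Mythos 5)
Your proof is correct and follows essentially the same route as the paper's: establish that $\fl(w)$ is $p$-closed via $\Fix((D'_1)^p) \supset \Fix(D'_1) \ni w$ together with the characterization of $\fl(w)$ in Proposition \ref{prop:T free}(\ref{prop:T free:canonical line}), extend a generator of $\fl(w)$ to a basis of $\fg$ (hence a basis of $T_B$, by freeness of $T_{\bar{Y}}$), and let Lemma \ref{lem:tangent of RDP}(\ref{lem:tangent of RDP:2}) exclude one of the two possibilities. The only difference is that you spell out two steps the paper compresses into single clauses --- the stability of $\fl(w)$ under the $p$-th power map, and the rescaling $D'_1 \mapsto \lambda^{-1} D'_1$ reconciling the eigenvalue condition $\lambda \neq 0$ with the normalized condition $D^p = D$ used in the lemma --- and both are handled correctly.
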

\begin{proof}
The canonical line $\fl(w)$ is $p$-closed since $\Fix(D^p) \supset \Fix(D) \ni w$ for $D \in \fl(w)$. 
Hence it is either of multiplicative type or of additive type.
Take a generator $D'_1$ of $\fl(w)$ and extend it to a basis $D'_1, D'_2$ of $\fg$.
Then Lemma \ref{lem:tangent of RDP}(\ref{lem:tangent of RDP:2}) excludes one possibility.
\end{proof}

\section{Proof of the main theorem} \label{sec:proof}

Hereafter we assume $p = 2$.

In this section we prove Theorem \ref{thm:main3}.
Our proof is case-by-case:
$\bar{Y}$ having an EDP are discussed in Section \ref{subsec:proof:EDP},
those with only RDPs of type $D_n$ and $E_n$ (those with $\Sing(\bar{Y}) = 3 D_4^0$, $D_4^0 + D_8^0$, $D_4^0 + E_8^0$, or $D_{12}^0$) in Section \ref{subsec:proof:DE},
and those having at least one RDP of type $A_1$ (those with $\Sing(\bar{Y}) = 12 A_1$, $8 A_1 + D_4^0$, $6 A_1 + D_6^0$, or $5 A_1 + E_7^0$) in Section \ref{subsec:proof:A}.
By Theorems \ref{thm:classification of canonical covering} and \ref{thm:EHSB},
this covers all cases to be considered.

Before splitting into cases, we note the following.
	\begin{prop}[cf.\ \cite{Matsumoto:k3alphap}*{Theorem 9.1}] \label{prop:nexists:ss}
		Let $\bar{Y}$ be a normal surface that is the canonical covering of a supersingular Enriques surface $X$.
		Then $\Sing(\bar{Y})$ is one of 
		\[ 12 A_1, \, 3 D_4^0, \, D_4^0 + D_8^0, \, D_4^0 + E_8^0, \, D_{12}^0, \, \text{or } \Ella. \]
	\end{prop}
	\begin{proof}
		We have $K_X = 0$ since $X$ is supersingular, and $K_{\bar{Y}} = 0$ and $\bar{Y}$ is normal by assumption.
		Then the ``dual'' morphism $\map{\pi'}{\thpower{X}{1/2}}{\bar{Y}}$ is,
		by the argument in \cite{Matsumoto:k3alphap}*{proof of Theorem 4.3},
		the quotient morphism by 
		either a $\mu_2$- or $\alpha_2$-action 
		with only isolated fixed points,
		and the fixed locus $\isolatedfix{D'}$ of the corresponding derivation $D'$ on $\thpower{X}{1/2}$ has degree $12$ 
		by the Katsura--Takeda formula. 
		We use the classification (\cite{Matsumoto:k3alphap}*{Lemma 3.6 and Corollary 3.9}) of $\mu_2$- and $\alpha_2$-quotient singularities with degree $\leq 12$.
		If it is a $\mu_2$-quotient, then each singular point of $\bar{Y}$ is an RDP of $A_1$.
		If it is an $\alpha_2$-quotient, then each singular point of $\bar{Y}$ is an RDP of type $D_{4n}^0$ or $E_8^0$ or an EDP of type $\Ella$.
		In each case, the degree of $\isolatedfix{D'}$ at each point is equal to the index of the quotient singularity.
	\end{proof}

\subsection{Case of \texorpdfstring{$\bar{Y}$}{Y} with an EDP} \label{subsec:proof:EDP}

\begin{rem} \label{rem:CD131}
	Suppose $\bar{Y}$ has a non-RDP.
	It is claimed in the proof of \cite{Cossec--Dolgachev:enriques}*{Proposition 1.3.1} that 
	then $\bar{Y}$ has exactly one non-RDP singularity and it is an EDP.
	The proof is however incomplete where they use the Leray spectral sequence.
	This is fixed in the new version of the book \cite{Cossec--Dolgachev--Liedtke:enriques1}.
	Schr\"oer \cite{Schroer:K3-like}*{proof of Proposition 5.4} also gives an argument.
	We can also use 
	the classification (\cite{Matsumoto:k3alphap}*{Lemma 3.6 and Corollary 3.9}) of $2$-closed derivation quotient singularities with small degree,
	saying that the singularity is an RDP if degree $\leq 10$
	and that the singularity is either an RDP or an EDP if degree $\leq 12$.
\end{rem}

The essential part of the proof of this case is:
\begin{prop} \label{prop:nexists:cl:EDP}
	Suppose $X$ is a classical Enriques surface
	whose canonical covering $\bar{Y}$ is normal.
	Then $\Sing(\bar{Y})$ does not contain an EDP.
\end{prop}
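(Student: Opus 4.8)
The plan is to rephrase the statement in terms of derivations and then to exploit the difference between the classical and supersingular torsor structures. Since $X$ is classical we have $\Pictau_X \cong \bZ/2\bZ$, so its Cartier dual is $\mu_2$ and the canonical covering $\bar{Y} \to X$ is a $\mu_2$-torsor; equivalently, the global field $D \in \fg := H^0(\bar{Y}, T_{\bar{Y}})$ inducing the covering is of multiplicative type and satisfies $\Fix(D) = \emptyset$ (this is the reverse direction to Proposition~\ref{prop:enriques derivation}(\ref{item:quotient}), the fixed-point-freeness being the content of the remark preceding it). I argue by contradiction: suppose $\Sing(\bar{Y})$ contains an EDP $w$. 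By Theorem~\ref{thm:classification of canonical covering} there is then exactly one non-RDP point, an EDP, and $\bar{Y}$ is rational; by Theorem~\ref{thm:Schroer} this EDP is in fact the \emph{only} singular point. By Corollary~\ref{cor:canonical line} the canonical line $\fl(w) \subset \fg$ attached to the EDP is of \emph{additive} type, while $D$ is multiplicative and, being fixed-point-free, satisfies $D \notin \fl(w)$ by Proposition~\ref{prop:T free}(\ref{prop:T free:canonical line}). Thus $\fg$ would contain both an additive line and a multiplicative element, so by Proposition~\ref{prop:classification of Lie algebra} (the per-type computation in the proof of Corollary~\ref{cor:Lie algebra}) it must be of type (\ref{type:non-abelian}) or type (\ref{type:1-0}). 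The goal is to rule these out, i.e.\ to show $\fg$ admits no multiplicative vector field at all.

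One is tempted to settle this locally at the EDP: writing $B = k[[x,y,z]]/(z^2 + x^3 + y^7)$, the tangent module $T_B$ is free on $D_1 = \partial_z$ and $D_2 = y^6 \partial_x + x^2 \partial_y$ (cf.\ Lemma~\ref{lem:tangent of RDP}), and a direct check gives $D_1^2 = D_2^2 = 0$ and $[D_1, D_2] = 0$, so the EDP is additive by nature. This alone does \emph{not} determine $\fg$, however: under the isomorphism of restricted Lie algebras $\fg \isomto T_{\bar{Y},w} \otimes k(w)$ the bracket and $p$-operation are computed from a \emph{global} frame $e_i = \sum_j a_{ij} D_j$, and the structure constants then involve the derivatives $D_j(a_{ik})$ rather than merely the vanishing bracket and squares of $D_1, D_2$. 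Hence genuinely global input is required, and this is where the distinction between the classical and supersingular cases must enter.

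The way I would supply this input is through the genus-one fibration. Every Enriques surface carries a genus-one fibration $\phi \colon X \to \bP^1$, and since $X$ is classical Proposition~\ref{prop:multiple fibers} gives \emph{exactly two} multiple fibers, each a smooth ordinary elliptic curve or a fiber of additive type. I would trace how the $\mu_2$-torsor $\bar{Y} \to X$ degenerates over these two half-fibers and show that the wild degeneration producing the EDP cannot be concentrated over a single point of $\bP^1$: a classical covering must degenerate over both half-fibers, producing at least two special points, whereas by Theorem~\ref{thm:Schroer} the EDP is the unique singularity of $\bar{Y}$. In the supersingular case, by contrast, there is a single multiple fiber, consistent with one concentrated singularity. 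I expect this local-to-global step to be the main obstacle, namely making rigorous the claim that the $\mu_2$-covering of a classical surface spreads its degeneration over both half-fibers, so that a single point of type $\Ella$ is impossible. As a complementary check on the numerics I would apply the Katsura--Takeda formula (Theorem~\ref{thm:c2 derivation}) to the lift of $D$ on the minimal resolution $Y$, using the equality $\deg \isolatedfix{D} = 12$ for the $\Ella$ EDP together with the divisibility of $K_Y - \divisorialfix{D}$ from Proposition~\ref{prop:half fix}, to confirm that only the additive (supersingular) case is consistent with $Y$ being rational.
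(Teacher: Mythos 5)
Your reduction is set up correctly: since $X$ is classical the covering derivation $D$ on $\bar{Y}$ is of multiplicative type and fixed-point-free, the canonical line $\fl(w)$ at an EDP is of additive type by Corollary \ref{cor:canonical line}, and you rightly observe that the local restricted Lie algebra data at the $\Ella$ point (the frame $\partial_z$, $y^6\partial_x + x^2\partial_y$ with vanishing squares and bracket) cannot by itself exclude a global multiplicative field, because global sections restrict to $B$-linear combinations of that frame and the structure constants involve derivatives of the coefficients. But the step you defer -- and flag as the main obstacle -- is the entire content of the proposition, and the mechanism you propose for it is wrong. You claim that ``a classical covering must degenerate over both half-fibers, producing at least two special points.'' The singularities of $\bar{Y}$ do not lie over the multiple fibers of an elliptic fibration $\phi\colon X \to \bP^1$; they lie over the non-smooth points of the fibers (cf.\ \cite{Schroer:K3-like}*{Proposition 4.7}). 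The paper's own Example \ref{ex:12A1,AD} shows this concretely: there the two half-fibers of the classical quotient are smooth \emph{ordinary} elliptic curves, $\bar{Y}$ is smooth above both of them, and all twelve $A_1$ points sit over nodal fibers elsewhere. So ``two half-fibers $\Rightarrow$ at least two singular points of $\bar{Y}$'' is false, and no argument along those lines can close the gap. Your fallback numerical check via Katsura--Takeda is also not probative: $\deg\isolatedfix{D'} = 12$ is exactly what one gets for a single $\Ella$ point, and this count is equally consistent with a classical ($\mu_2$) or supersingular ($\alpha_2$) quotient, so it cannot distinguish the two cases.

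The paper's actual proof runs through the elliptic fibration in a quite different way. First, the degree bound of \cite{Matsumoto:k3alphap}*{Corollary 3.9} (an EDP quotient point eats degree $\geq 11$ out of the total $\deg\isolatedfix{D'} = 12$) gives $\#\Sing(\bar{Y}) \leq 2$, so Lemma \ref{lem:radical2section} forces some elliptic fibration $\phi$ on the classical surface $X$ to admit a radical two-section. This identifies $\bar{Y}$ birationally with the Frobenius base change $\thpower{J}{2/\bP^1}$ of the Jacobian $J$, whose singularities are governed by Lang's classification of Weierstrass equations: by \cite{Schroer:K3-like}*{Theorem 12.1}, a non-RDP point on $\thpower{J}{2/\bP^1}$ occurs only for Lang type 9C with $t \divides \gamma_3$, which forces $J$ (hence $\phi$, up to multiplicity) to have a \emph{unique} singular fiber with all smooth fibers \emph{supersingular} elliptic curves. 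The contradiction with classicality is then via Proposition \ref{prop:multiple fibers}: of the two multiple fibers of $\phi$, at most one can be the unique singular fiber, so the other is smooth and hence supersingular, whereas a smooth multiple fiber of a classical Enriques surface must be ordinary. The decisive input is thus Lang's classification forcing supersingularity of all smooth fibers -- a global constraint of a completely different nature from the one you conjectured, and one that your proposal has no route to.
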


\begin{defn} \label{def:radical2section}
Following Schr\"oer \cite{Schroer:K3-like}*{Section 8}, we say that an integral curve $A \subset X$
is a \emph{radical two-section} of an elliptic fibration $X \to \bP^1$
if the composite $A \to X \to \bP^1$ is surjective and inseparable of degree $2$.
\end{defn}
Following arguments of \cite{Schroer:K3-like}*{proof of Proposition 8.9},
we can prove the following assertion
on Enriques surfaces having no elliptic fibrations admitting a radical two-section.

\begin{lem}[cf.\ \cite{Schroer:K3-like}*{Proposition 8.9}] \label{lem:radical2section}
	Suppose $X$ is a classical or supersingular Enriques surface
	whose canonical covering $\bar{Y}$ is normal.
	Assume that no elliptic fibration on $X$ admits a radical two-section.
	Then either $X$ is supersingular or $\# \Sing(\bar{Y}) \geq 5$.
\end{lem}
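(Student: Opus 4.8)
The plan is to argue by contraposition inside the classical case: assuming $X$ is classical and $\#\Sing(\bar Y)\le 4$, I would produce an elliptic fibration on $X$ carrying a radical two-section, contradicting the hypothesis; the supersingular alternative in the conclusion then makes this suffice. The first step is purely combinatorial. Going through the list of Theorem \ref{thm:EHSB} together with the EDP case, the configurations with at most four singular points are precisely $3D_4^0$, $D_4^0+D_8^0$, $D_4^0+E_8^0$, $D_{12}^0$ and the single EDP of type $\Ella$, and none of these contains an $A_1$. Hence, by Corollary \ref{cor:canonical line}, every canonical line $\fl(w)$ attached to a point of $\Sing(\bar Y)$ is of additive type, whereas a classical surface is by definition covered by a $\mu_2$-torsor, for which the associated covering derivation is fixed-point-free of multiplicative type (cf.\ the remark preceding Proposition \ref{prop:enriques derivation}).

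Next I would use the restricted Lie algebra $\fg=H^0(\bar Y,T_{\bar Y})$ to cut down the cases. Since $X$ is classical, $\fg$ contains a multiplicative line, namely the one spanned by the covering derivation. If two of the additive canonical lines $\fl(w)$ were distinct, then $\fg$ would contain at least three distinct $p$-closed lines, at least one of multiplicative and at least two of additive type; Corollary \ref{cor:Lie algebra}(\ref{case:2-1}) would then force $\fg$ to be of type (\ref{type:non-abelian}) with a \emph{unique} additive line, a contradiction. Thus the only situation left is the one in which all canonical lines coincide into a single additive line; this holds automatically, in particular, for the one-point configurations $D_{12}^0$ and the EDP, where $\fg$ may a priori still be of type (\ref{type:non-abelian}) and carry a multiplicative fixed-point-free element, so that a classical structure is not ruled out by local data alone.

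For this remaining, genuinely geometric situation I would follow Schr\"oer's argument for \cite{Schroer:K3-like}*{Proposition 8.9}. Concretely, I would fix an elliptic fibration $\varphi\colon X\to\bP^1$ and invoke Proposition \ref{prop:multiple fibers}: a classical Enriques surface has exactly two multiple fibers, each a smooth ordinary elliptic curve or of additive type. Pulling $\varphi$ back through the canonical covering gives a genus-one fibration on $\bar Y$, and every singular point of $\bar Y$ lies in a fiber over the image of a bad or multiple fiber of $\varphi$. The plan is to read off, from the explicit local model of the purely inseparable covering over each fiber type, which fibers are forced to carry singular points of $\bar Y$; when the total is as small as at most four, the covering must be inseparably split along one of the two multiple fibers, and the resulting inseparable bisection of $\varphi$ is precisely a radical two-section in the sense of Definition \ref{def:radical2section}.

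The main obstacle, and the technical core following Schr\"oer, is exactly this fiber-by-fiber dictionary: one has to compute the local structure of $\bar Y\to X$ over each fiber type of $\varphi$ — above all over the two multiple fibers, distinguishing the ordinary-elliptic half-fiber from the additive one — determine how each contributes to $\Sing(\bar Y)$, and then extract a \emph{genuine} radical two-section (rather than a separable or a higher-degree inseparable multisection) from the multiple-fiber geometry in the low-count case. Making this local analysis and the accompanying numerical bookkeeping interlock so that the threshold falls exactly at $\#\Sing(\bar Y)=5$ is the step I expect to require the most care.
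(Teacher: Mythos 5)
Your proposal has a genuine gap: the entire geometric core of the lemma is deferred, not proved. Steps 1--2 (listing the $\le 4$-point configurations via Theorem \ref{thm:EHSB} and Schr\"oer's theorem that an EDP is the unique singularity, then using the restricted Lie algebra to force all canonical lines into a single additive line) are correct but inert --- your step 3 never uses their conclusion, and the situation they leave open (classical $X$ with, say, $\Sing(\bar Y)=D_{12}^0$ or one EDP, all canonical lines equal and additive) is precisely where the lemma's content lies. For that situation you offer only a plan, and its key claim --- that $\#\Sing(\bar Y)\le 4$ forces the covering to be ``inseparably split along one of the two multiple fibers'', yielding a radical two-section --- comes with no argument; nothing you set up explains why a small singularity count would produce such a splitting, nor why the threshold should fall exactly at $5$. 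In the paper the number $5$ has an entirely different source: after passing to Jacobian fibrations it is the minimum number of nodal fibers in Lang's classification of rational elliptic surfaces whose singular fibers are all of type $\rI_1$ or $\rII$ (the possible configurations being $12\rI_1$, $8\rI_1+\rII$, $6\rI_1+\rII$, $5\rI_1+\rII$), each node lying under a singular point of $\bar Y$ by Schr\"oer's Proposition 4.7.

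The structural choice of contrapositive is also working against you. The paper never constructs a radical two-section --- that is the hard direction --- it only exploits consequences of their absence: by Schr\"oer's Proposition 8.8 (and the same proof in the nodal case), no radical two-section implies $X$ carries no smooth rational curve and no non-movable nodal or cuspidal rational curve; hence for any elliptic fibration every half-fiber is smooth and every singular fiber is irreducible of type $\rI_1$ or $\rII$; these types transfer to the Jacobian by Liu--Lorenzini--Raynaud, and Lang's classification then gives the dichotomy: either no $\rI_1$ fiber occurs, in which case the relative $j$-invariant is identically zero, all smooth fibers and hence all half-fibers are supersingular elliptic curves, and Proposition \ref{prop:multiple fibers} rules out $X$ being classical; or at least five $\rI_1$ fibers occur, giving $\#\Sing(\bar Y)\ge 5$. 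Your route instead must exhibit a radical two-section out of the hypothesis ``classical and $\#\Sing(\bar Y)\le 4$'', and no mechanism for doing so is supplied. (A minor point: your appeal to Schr\"oer's Theorem 14.1 in the EDP case is not mathematically circular, since that theorem has an independent proof, but the paper pointedly avoids it --- see the remark at the end of Section \ref{subsec:proof:EDP}, where that theorem is rederived from consequences of this very lemma.)
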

\begin{proof}
	Since $\bar{Y}$ is normal,
	any genus one fibration on $X$ is elliptic (\cite{Schroer:K3-like}*{Theorem 5.6(i)}).
	Suppose no elliptic fibration on $X$ admits a radical two-section.
	Then $X$ does not admit a smooth rational curve 
	nor a non-movable cuspidal rational curve (\cite{Schroer:K3-like}*{Proposition 8.8})
	nor a non-movable nodal rational curve (same proof as in the cuspidal case).
	Let $\map{\phi}{X}{\bP^1}$ be an elliptic fibration
	and $\map{\phi'}{J}{\bP^1}$ its Jacobian fibration.
	By above, any half-fiber of $\phi$ is smooth,
	and any singular fiber of $\phi$ is of Kodaira type $\rI_1$ or $\rII$.
	(We call $(X_a)_{\red}$ a half-fiber if $X_a$ is a multiple fiber of multiplicity $2$.)
	By \cite{Liu--Lorenzini--Raynaud:neron}*{Theorem 6.6},
	if a fiber of an elliptic fibration is of type $m T$, 
	where $m \in \bZ_{>0}$ is the multiplicity and $T \in \set{\rI_n, \rI_n^*, \rII, \rII^*, \rIII, \rIII^*, \rIV, \rIV^*}$ is the symbol denoting the Kodaira type,
	then the corresponding fiber of its Jacobian fibration is of type $T$. 
	Hence $\phi'$ has the same types of singular fibers as $\phi$ (up to multiplicity).

	Suppose $\phi'$ has no fibers of type $\rI_1$.
	Then, 
	by Lang's classification of configurations of singular fibers of rational elliptic surfaces (\cite{Lang:configurations}*{Section 2 or 4}), 
	the relative $j$-invariant for $\phi'$ is $0$.
	This shows that any smooth fiber of $\phi'$ is a supersingular elliptic curve.
	Let $(X_a)_{\red}$ be a half-fiber of $\phi$. 
	Then it is smooth by above,
	and isogenous to the corresponding fiber $J_a$ of $\phi'$
	(consider the base change to a finite cover $C \to \bP^1$ over which $\phi$ acquires a section),
	hence supersingular.
	Then $X$ cannot be classical by Proposition \ref{prop:multiple fibers}.

	Now suppose there is at least one fiber of type $\rI_1$ (and no singular fiber of type other than $\rI_1$ and $\rII$).
	Again by Lang's classification (\cite{Lang:configurations}*{Sections 2--3 or 4}), we observe that 
	the singular fibers of $\phi'$, and hence those of $\phi$, are $12 \rI_1$, $8 \rI_1 + \rII$, $6 \rI_1 + \rII$, or $5 \rI_1 + \rII$.
	By \cite{Schroer:K3-like}*{Proposition 4.7},
	the point above the node of each fiber of type $\rI_1$ is a singular point of $\bar{Y}$.
	Hence $\bar{Y}$ has at least $5$ singular points.
\end{proof}

\begin{proof}[Proof of Proposition \ref{prop:nexists:cl:EDP}]
	Since $\bar{Y}$ is normal, the ``dual'' morphism $X \to \secondpower{\bar{Y}}$ is the quotient by 
	a rational derivation $D'$ on $X$.
	We have $\Sing(\bar{Y}) = \bar{\pi}^{-1}(\Supp \isolatedfix{D'})$.
	By the Rudakov--Shafarevich formula we have $\divisorialfix{D'} \sim - K_X \equiv 0$, 
	hence by the Katsura--Takeda formula we have $\deg \isolatedfix{D'} = 12$.
	Here $\equiv$ is the numerical equivalence.
	By \cite{Matsumoto:k3alphap}*{Corollary 3.9}, if the quotient singularity on $\secondpower{\bar{Y}}$ is an EDP then $\isolatedfix{D'}$ has degree at least $11$ at the corresponding point of $X$.
	Hence if $X$ has an EDP then $\# \Sing(\bar{Y}) \leq 2$.
 
	Since $X$ is classical,
	we may assume by Lemma \ref{lem:radical2section}
	that $X$ admits an elliptic fibration $\map{\phi}{X}{\bP^1}$ with a radical two-section.
	Then by \cite{Schroer:K3-like}*{Propositions 8.1 and 8.5},
	$\map{\phi \circ \bar{\pi}}{\bar{Y}}{\bP^1}$ factors as $\bar{Y} \namedto{\psi} \bP^1 \namedto{F} \bP^1$
	and this $\psi$ admits a section (e.g.\ $\bar{\pi}^{-1}(A)_{\red}$ for any radical two-section $A$ of $\phi$).
	Let $\map{\phi'}{J}{\bP^1}$ be the Jacobian fibration of $\phi$,
	and $\map{\psi'}{\thpower{J}{2/\bP^1} := J \times_{\bP^1} \bP^1}{\bP^1}$ be the Frobenius base change of $\phi'$.
	Then the existence of a section of $\psi$ implies that 
	the generic fiber of $\map{\psi}{\bar{Y}}{\bP^1}$ is 
	isomorphic to the generic fiber of $\map{\psi'}{\thpower{J}{2/\bP^1}}{\bP^1}$
	by \cite{Schroer:K3-like}*{Proposition 8.4}.
	In particular $\bar{Y}$ and $\thpower{J}{2/\bP^1}$ are birational.
	As above, if $X_a$ is of type $m T$ then $J_a$ is of type $T$.
	Since $\bar{Y}$ is normal, we have $T \in \set{\rI_n, \rII, \rIII, \rIV}$ by \cite{Schroer:K3-like}*{Theorem 5.6(ii)},
	in particular $J_a$ is reduced for all $a \in \bP^1$.
	By \cite{Schroer:K3-like}*{Proposition 11.1}, $\thpower{J}{2/\bP^1}$ also has trivial dualizing sheaf.
	By \cite{Schroer:K3-like}*{Proposition 11.2}, $\Sing(\thpower{J}{2/\bP^1})$ is precisely the points over the non-smooth locus of $J \to \bP^1$,
	and then it is isolated since $J$ has only finitely many singular fibers and all of them are reduced. 

	Suppose $\bar{Y}$ has an EDP.
	Then $\bar{Y}$ and hence $\thpower{J}{2/\bP^1}$ are rational surfaces.
	Since $\thpower{J}{2/\bP^1}$ has trivial dualizing sheaf and $\Sing(\thpower{J}{2/\bP^1})$ is isolated, 
	$\thpower{J}{2/\bP^1}$ also has a non-RDP singularity.
	By \cite{Schroer:K3-like}*{Theorem 12.1}, based on Lang's classification \cite{Lang:extremalII}*{Section 2A} of local Weierstrass equations in characteristic $2$, 
	this can happen only if the corresponding fiber of $J$ is of Lang type 9C (i.e. 
	$J$ is of the form
	\[
	y^2 + t^3 \gamma_0 y = x^3 + t \gamma_1 x^2 + t \gamma_3 x + t \gamma_5 ,
	\]
	with polynomials $\gamma_i \in k[t]$ of degree $\leq i$ satisfying $t \notdivides \gamma_0$ and $t \notdivides \gamma_5$)
	and moreover $t \divides \gamma_3$. 
	In particular, $\map{\phi'}{J}{\bP^1}$ has only one singular fiber (at $t = 0$)
	and all remaining fibers are supersingular elliptic curves.

	As in the previous lemma, $(X_a)_{\red}$ is smooth if and only if $J_a$ is smooth,
	and in this case these elliptic curves are isogenous. 
	Hence $\map{\phi}{X}{\bP^1}$ has, up to multiplicity, only one singular fiber and all remaining fibers are supersingular elliptic curves.

	On the other hand, since $X$ is classical, the elliptic fibration $\map{\phi}{X}{\bP^1}$ has two multiple fibers,
	and each multiple fiber is either a smooth ordinary elliptic curve or a singular fiber of additive type 
	(Proposition \ref{prop:multiple fibers}). 
	Contradiction.
\end{proof}

	The supersingular case remains.

\begin{proof}[Proof of Theorem \ref{thm:main3} in the case $\bar{Y}$ has an EDP]
Let $D$ be a fixed-point-free derivation on $\bar{Y}$ with Enriques quotient $X := Y^D$.
By Proposition \ref{prop:nexists:cl:EDP}, $X$ is supersingular.
By Proposition \ref{prop:nexists:ss}, $\Sing(\bar{Y})$ consists of one point, of type $\Ella$.
By Corollary \ref{cor:canonical line}, the canonical line $\fl$ attached to the singularity is of additive type.
Since the $2$ lines $[D]$ and $\fl$ of $\fg$ of additive type are distinct (Proposition \ref{prop:T free}(\ref{prop:T free:canonical lines})), 
it follows from Corollary \ref{cor:Lie algebra}(\ref{case:0-2}) that 
all lines of $\fg$ are of additive type
and that $\fg$ is abelian.
\end{proof}

\begin{rem}
	Combining Propositions \ref{prop:nexists:cl:EDP} and \ref{prop:nexists:ss}, 
	we obtain another proof of Schr\"oer's result \cite{Schroer:K3-like}*{Theorem 14.1}
	that if $\bar{Y}$ has an EDP then it has no other singularities.
\end{rem}

\subsection{Case of \texorpdfstring{$\bar{Y}$}{Y} with only RDPs of type $D_n$ or $E_n$} \label{subsec:proof:DE}

The following lemma on RDP K3 surfaces follows from arguments in \cite{Matsumoto:k3alphap}.

\begin{lem} \label{lem:cohomology of RDP}
	Suppose $\bar{Y}$ is an RDP K3 surface with $\Sing(\bar{Y}) \neq \emptyset$,
	with $\Sing(\bar{Y}) = \set{w_i}_{i = 1}^N$,
	and $(n_i)_{i = 1}^N$ are positive integers 
	such that for each $i$ one of the following holds.
	\begin{itemize}
		\item $w_i$ is an RDP of type $D_{4n_i}^0$.
		\item $w_i$ is an RDP of type $E_8^0$ and $n_i = 2$.
	\end{itemize} 
For each $i$, 
let $I_{w_i} \subset \cO_{\bar{Y},{w_i}}$ be the ideal defined in \cite{Matsumoto:k3alphap}*{Section 6.2},
and let $\cI = \Ker(\cO_{\bar{Y}} \to \bigoplus_i \cO_{\bar{Y},{w_i}} / I_{w_i})$.
	Then,
	\begin{enumerate}
	\item \label{lem:cohomology of RDP:dim}
	the Frobenius map $\map{F}{\Ext^1_{\bar{Y}}(\cI, \cO)}{\Ext^1_{\bar{Y}}(\secondpower{\cI}, \cO)}$ is zero
	and we have $\dim \Ext^1_{\bar{Y}}(\cI, \cO) = -1 + \sum n_i$.
	\item \label{lem:cohomology of RDP:derivation}
	There is a family $(\bar{Z}'_e, D_e)$
	of $\alpha_2$-coverings $\map{\pi'_e}{\bar{Z}'_{e}}{\bar{Y}^{\sm}}$
	and global derivations $D_e \in H^0(\bar{Y}, T_{\bar{Y}})$ of additive type,
	parametrized by $e \in \Ext^1_{\bar{Y}}(\cI, \cO)$,
	such that 
	\begin{itemize}
	\item $\Sing(\bar{Z}'_e) = \pi'_e(\Fix(D_e \restrictedto{\bar{Y}^{\sm}}))$,
	\item The sequence  
	$0 \to \cO_{\bar{Y}^{\sm}} \to \cO_{\bar{Z}'_e} \namedto{\delta} \cO_{\bar{Y}^{\sm}} \to 0$,
	where $\delta$ is the derivation corresponding to the $\alpha_2$-action,
	is exact and represents the restriction of $e$ to $\bar{Y}^{\sm}$, and
	\item $\Ext^1_{\bar{Y}}(\cI, \cO) \to H^0(\bar{Y}, T_{\bar{Y}}) \colon e \mapsto D_e$ 
	is an injective semilinear map.
	\end{itemize}
	\end{enumerate}
\end{lem}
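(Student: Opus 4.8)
The plan is to reduce both parts to the cohomology of the RDP K3 surface $\bar{Y}$ — where $H^1(\bar{Y},\cO)=0$, $H^2(\bar{Y},\cO)=k$ and $\omega_{\bar{Y}}\cong\cO_{\bar{Y}}$ (Section \ref{subsec:basic}) — combined with the purely local analysis of the ideals $I_{w_i}$ carried out in \cite{Matsumoto:k3alphap}*{Section 6.2}. Throughout I would write $\cQ:=\cO_{\bar{Y}}/\cI=\bigoplus_i\cO_{\bar{Y},w_i}/I_{w_i}$, a sheaf supported on the finite set $\Sing(\bar{Y})$.

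For the dimension formula in (\ref{lem:cohomology of RDP:dim}), I would apply $\Hom_{\bar{Y}}(\functorspace,\cO)$ to
\[ 0 \to \cI \to \cO_{\bar{Y}} \to \cQ \to 0. \]
Since $\cQ$ is torsion, $\Hom(\cQ,\cO)=0$; since $\bar{Y}$ is normal the inclusion induces an isomorphism $\Hom(\cO,\cO)\isomto\Hom(\cI,\cO)$; and $\Ext^1(\cO,\cO)=H^1(\bar{Y},\cO)=0$. Hence $\Ext^1(\cQ,\cO)=0$ and there is an exact sequence
\[ 0 \to \Ext^1(\cI,\cO) \to \Ext^2(\cQ,\cO) \to \Ext^2(\cO,\cO). \]
Serre duality on the Gorenstein surface $\bar{Y}$ gives $\Ext^2(\cI,\cO)\cong H^0(\bar{Y},\cI)^\vee=0$ (a global constant lies in every $I_{w_i}$ only if it vanishes), so the right-hand map is onto $\Ext^2(\cO,\cO)=H^2(\bar{Y},\cO)=k$. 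As $\cQ$ is supported at points, $\Ext^2(\cQ,\cO)\cong\bigoplus_i\Ext^2_{\cO_{w_i}}(\cO_{w_i}/I_{w_i},\cO_{w_i})$, and local duality on each Gorenstein RDP identifies the $i$-th summand with the Matlis dual of $\cO_{w_i}/I_{w_i}$; thus $\dim_k\Ext^2(\cQ,\cO)=\sum_i\length(\cO_{w_i}/I_{w_i})=\sum_i n_i$ by the defining property of $I_{w_i}$ (length $n_i$ for $D^0_{4n_i}$, length $2$ for $E^0_8$). Subtracting the one-dimensional image yields $\dim\Ext^1(\cI,\cO)=-1+\sum_i n_i$.

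For the vanishing of $\map{F}{\Ext^1(\cI,\cO)}{\Ext^1(\secondpower{\cI},\cO)}$, I would note that $F$ is the Frobenius-semilinear pullback of extensions and that, by the local-to-global spectral sequence, $\Ext^1(\cI,\cO)$ embeds into $\bigoplus_i\Ext^1_{\cO_{w_i}}(I_{w_i},\cO_{w_i})$; so it suffices to check that $F$ annihilates each local class. The generators of $I_{w_i}$ recorded in \cite{Matsumoto:k3alphap}*{Section 6.2} are chosen precisely so that the Frobenius pullback of each local extension splits — this is the algebraic incarnation of ``additive type'' and is what distinguishes the $\alpha_2$-case from the $\mu_2$-case. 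I expect this case-by-case local Frobenius computation (for $D^0_{4n}$ and $E^0_8$) to be the main obstacle, since it rests on the fine structure of $I_{w_i}$ rather than on any formal property.

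For (\ref{lem:cohomology of RDP:derivation}) I would work first over $\bar{Y}^{\sm}$, where $\cI|_{\bar{Y}^{\sm}}=\cO$ and hence $e|_{\bar{Y}^{\sm}}\in\Ext^1_{\bar{Y}^{\sm}}(\cO,\cO)=H^1(\bar{Y}^{\sm},\cO)$ (nonzero in general, since $\bar{Y}^{\sm}$ is not proper). The vanishing $F(e)=0$ says this class is killed by Frobenius, so through $0\to\alpha_2\to\Ga\namedto{F}\Ga\to0$ it lifts to an $\alpha_2$-torsor $\map{\pi'_e}{\bar{Z}'_e}{\bar{Y}^{\sm}}$; its structure sheaf realizes the asserted extension $0\to\cO\to\cO_{\bar{Z}'_e}\namedto{\delta}\cO\to0$ representing $e|_{\bar{Y}^{\sm}}$, with $\delta$ the action derivation, and the identity $\Sing(\bar{Z}'_e)=\pi'_e(\Fix(D_e|_{\bar{Y}^{\sm}}))$ is the local statement of \cite{Matsumoto:k3alphap}*{Theorem 3.3(1)}. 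The dual $\alpha_2$-structure furnishes an additive-type derivation on $\bar{Y}^{\sm}$, which extends uniquely to $D_e\in H^0(\bar{Y},T_{\bar{Y}})$ because $T_{\bar{Y}}$ is reflexive and $\Sing(\bar{Y})$ has codimension $2$. Finally $e\mapsto D_e$ is semilinear because the lift through $F$ is Frobenius-semilinear, and injective because $e$ is recovered from $e|_{\bar{Y}^{\sm}}$, the restriction $\Ext^1(\cI,\cO)\injto H^1(\bar{Y}^{\sm},\cO)$ being injective; both assertions follow the corresponding arguments in \cite{Matsumoto:k3alphap}.
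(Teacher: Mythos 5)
Your treatment of part (\ref{lem:cohomology of RDP:dim}) is correct and is essentially the paper's own argument: the paper uses exactly the exact sequence $0 \to \Ext^1_{\bar{Y}}(\cI,\cO) \to \bigoplus_i \Ext^2_{\bar{W}_i}(\cO/I_{w_i},\cO) \to H^2(\bar{Y},\cO) \to 0$ (together with its Frobenius twin), obtains $\dim = -1 + \sum n_i$ from $\dim_k \cO/I_{w_i} = n_i$, and derives the global Frobenius vanishing from the vanishing of the local Frobenius maps, which it cites from \cite{Matsumoto:k3alphap}*{Lemma 6.5(5)}. You correctly identified that this local vanishing is the one non-formal input tied to the structure of $I_{w_i}$; deferring it to the cited local analysis is exactly what the paper does.

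The genuine gap is in part (\ref{lem:cohomology of RDP:derivation}), at the step ``the dual $\alpha_2$-structure furnishes an additive-type derivation on $\bar{Y}^{\sm}$.'' Two things go wrong. First, the dual of an inseparable degree-$2$ covering is not a derivation but only a foliation (a rank-one subsheaf of $T_{\bar{Y}^{\sm}}$, equivalently the subfield $k(\bar{Z}'_e)^2 \subset k(\bar{Y})$); any two generators differ by a function, and rescaling by a function destroys both the $p$-power type and the linearity of $e \mapsto D_e$. The paper pins down a specific generator by normalizing against the global $2$-form: locally $\cO_{\bar{Z}'_e} = \cO[t]/(t^2 - c)$, $\eta := dc$ is a well-defined global $1$-form (because the $c_i$ differ by the squares $e_{ij}^2$, and $H^0(\bar{Y}^{\sm},\cO)=k$), and $D_e$ is defined by $D_e(f)\,\omega = df \wedge \eta$. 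Without this normalization the semilinearity, the injectivity, and the identity $\Fix(D_e) = \Zero(\eta) = \pi'_e(\Sing \bar{Z}'_e)$ in your last two bullets do not even typecheck. Second, and more seriously, additivity is \emph{not} a formal consequence of $\alpha_2$-duality, even after normalizing: over $\bA^2$ with $\omega = dx\wedge dy$, the hypersurface $t^2 = xy$ with the translation action $t \mapsto t + \epsilon$ is an $\alpha_2$-torsor, $\eta = d(xy)$, and the normalized dual derivation is $x\,\partial/\partial x + y\,\partial/\partial y$, which is of \emph{multiplicative} type ($D^2 = D$). So the implication ``$\alpha_2$-torsor $\Rightarrow$ dual derivation additive'' is false as a local statement; additivity of $D_e$ is a global assertion that must use properness of $\bar{Y}$ and the specific nature of the class $e$ (in the paper it is part of the construction imported from \cite{Matsumoto:k3alphap}*{proof of Theorem 7.3(2)}; it can alternatively be recovered a posteriori, as the paper in effect does in Section \ref{subsec:proof:DE}, by showing via the Rudakov--Shafarevich formula that the Enriques quotients arising from this family cannot be classical). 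Your proposal treats this key point as automatic, so as written it does not prove the ``additive type'' clause of the statement.

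A smaller remark: your injectivity argument (restriction $\Ext^1_{\bar{Y}}(\cI,\cO) \injto H^1(\bar{Y}^{\sm},\cO)$ followed by recovering $e$ from the torsor) is plausible but left unproven, and it again presupposes that $D_e$ determines the torsor; the paper argues more directly that $D_e = 0$ forces $\eta = 0$, hence $c$ is locally a square, hence the extension splits and $e = 0$.
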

\begin{proof}
	(\ref{lem:cohomology of RDP:dim})
	Consider the commutative diagram with exact rows
		\[
		\begin{tikzcd}
		0                                    \arrow[r] &
		\Ext^1_{\bar{Y}}(\cI,     \cO)               \arrow[r] \arrow[d,"F"] &
		\bigoplus_{i} \Ext^2_{\bar{W}_i}(\cO/I_{w_i},     \cO) \arrow[r] \arrow[d,"F"] &
		H^2(Y, \cO)                          \arrow[r] \arrow[d,"F"] &
		0 \\
		0                                    \arrow[r] &
		\Ext^1_{\bar{Y}}(\secondpower{\cI},     \cO)               \arrow[r] &
		\bigoplus_{i} \Ext^2_{\bar{W}_i}(\cO/\secondpower{I_{w_i}},     \cO) \arrow[r] &
		H^2(Y, \cO)                          \arrow[r] &
		0 
		\end{tikzcd}
		\]
	constructed as in \cite{Matsumoto:k3alphap}*{proof of Theorem 7.3(2)},
	where, for each $i$, $\bar{W}_i = \Spec \hat{\cO}_{\bar{Y},w_i}$ is the completion at the RDP $w_i$ of $\bar{Y}$.
	As proved in \cite{Matsumoto:k3alphap}*{Lemma 6.6(1)}, 
	the Frobenius map $\map{F}{\Ext^2_{\bar{W}_i}(\cO/I_{w_i}, \cO)}{\Ext^2_{\bar{W}_i}(\cO/\secondpower{I_{w_i}}, \cO)}$ associated with the local ring $\bar{W}_i$ are zero.
	This implies the former assertion.
	
	The latter equality follows from 
	$\dim_k \Ext^2_{\bar{W}_i}(\cO/I_{w_i}, \cO) = \dim_k (\cO/I_{w_i}) = n_i$
	and $\dim H^2(Y, \cO) = 1$.

(\ref{lem:cohomology of RDP:derivation})
	(This construction imitates Bombieri--Mumford's construction \cite{Bombieri--Mumford:III}*{Section 3} 
	of the canonical $\alpha_2$-covering of a supersingular Enriques surface $X$
	from a nontrivial class in $H^1(X, \cO)^{F = 0} = H^1(X, \cO)$.)
	
	We fix a nontrivial $2$-form $\omega$ on $Y^{\sm}$.
	Take a class $e \in \Ext^1_{\bar{Y}}(\cI, \cO)$ and consider the corresponding extension 
	\[
		0 \to \cO \to V \namedto{\delta} \cI \to 0.
	\] 
	Then we obtain, as in \cite{Matsumoto:k3alphap}*{proof of Theorem 7.3(2)},
	an $\alpha_2$-covering $\map{\pi'_e}{\bar{Z}'_e}{\bar{Y}^{\sm}}$
	with $V \restrictedto{\bar{Y}^{\sm}} = \cO_{\bar{Z}'_e}$
	and $\delta$ being the derivation corresponding to the $\alpha_2$-action.
	As in \cite{Matsumoto:k3alphap}*{Proposition 2.15} we define 
	a $1$-form $\eta$ on $\bar{Y}^{\sm}$
	and a $p$-closed derivation $D$ on $\bar{Y}^{\sm}$
	in the following way:
	let $t$ a local section of $V$ such that $\delta(t) = 1$,
	so that $\bar{Z}'_e$ is locally defined as 
	$\cO_{\bar{Z}'_e} = \cO_{\bar{Y}^{\sm}}[t] / (t^2 - c)$ with $c \in \cO_{\bar{Y}^{\sm}}$,  
	let $\eta = d c$, and define $D$ by $D(f) \omega = df \wedge \eta$.
	Then we have $\pi'_e(\Sing(\bar{Z}'_e)) = \Fix(D) = \Zero(\eta)$.
	
	Since $\bar{Y}$ is normal, the derivation $D$ on $\bar{Y}^{\sm}$ extends to one on $\bar{Y}$.
	This map $\Ext^1_{\bar{Y}}(\cI, \cO) \to H^0(\bar{Y}, T_{\bar{Y}}) \colon e \mapsto D$ is $F$-semilinear by construction. 
	We will show that this is injective.
	Suppose $D = 0$. Then $\eta = 0$.
	Then $c = b^2$ for some local sections $b$ of $\cO_{\bar{Y}^{\sm}}$.
	Then $t' := t - b$ glue to a global section $t' \in H^0(\bar{Y}^{\sm}, V)$ with $\delta(t') = 1$,
	and moreover to a global section on $\bar{Y}$,
	hence the extension is trivial and $e = 0$.
\end{proof}

\begin{proof}[Proof of Theorem \ref{thm:main3} in the case $\Sing(\bar{Y})$ is $3 D_4^0$, $D_4^0 + D_8^0$, $D_4^0 + E_8^0$, or $D_{12}^0$]
	By Lemma \ref{lem:cohomology of RDP}
	we obtain a $2$-dimensional family $\bar{Z}'_e$ of $\alpha_2$-coverings of $\bar{Y}^{\sm}$
	parametrized by $e \in \Ext^1_{\bar{Y}}(\cI, \cO) \cong H^0(\bar{Y}, T_{\bar{Y}})$.
	We show that if $e \neq 0$ then this extends to a family $\bar{Z}_e$ of $\alpha_2$-coverings of $\bar{Y}$,
	and show that the family $(\secondpower{\bar{Z}_{e}})_{e \in \Ext^1_{\bar{Y}}(\cI, \cO) \setminus \set{0} }$ 
	exhaust nontrivial $p$-closed derivation quotients of $Y$.

	Suppose $e \neq 0$ and let $D \neq 0$ be the corresponding derivation on $\bar{Y}$.
	Since $T_{\bar{Y}}$ is free, $D$ has no fixed points on $\bar{Y}^{\sm}$.
	Then $\bar{Z}'_e$ is normal,
	since it is regular outside the codimension $2$ subscheme ${\pi'_e}^{-1}(\Fix(D))$
	and is Gorenstein everywhere.
	Let $\bar{Z}_e \to \bar{Y}$ be the normalization of $\bar{Y}$ in $k(\bar{Z}'_e)$. 
	Then the derivation $\delta$ on $\bar{Z}'_e$ extends to a derivation of $\bar{Z}_e$,
	which defines an $\alpha_2$-action with quotient $\bar{Y}$.
	Since $\bar{Z}_e$ is normal and 
	$\cO_{\secondpower{\bar{Z}_e}} \restrictedto{\bar{Y}^{\sm}} = V^{(2)} \restrictedto{\bar{Y}^{\sm}} \subset \Ker D$,
	we obtain $\bar{Y}^D = \secondpower{\bar{Z}_{e}}$.
	Since $[k(\bar{Y}) : k(\secondpower{\bar{Z}_e})] = 2$, it follows that $D$ is $p$-closed, 
	and since $T_{\bar{Y}}$ is free, we have $D^2 = \lambda D$ with $\lambda \in k$.
	Note that replacing $e$ with a nonzero multiple replaces $D$ with a nonzero multiple, hence results in the same quotient.

	Take any $D$ that does not belong to any canonical line.
	Then $D$ is fixed-point-free by Proposition \ref{prop:T free}(\ref{prop:T free:canonical lines}), 
	hence $\bar{Y}^D = \secondpower{\bar{Z}_e}$ is an Enriques surface.
	It is supersingular and thus $D^2 = 0$,
	since a classical Enriques surface does not admit a regular $p$-closed derivation with K3-like quotient 
	by the Rudakov--Shafarevich formula (cf.\ \cite{Matsumoto:k3alphap}*{Proposition 4.5}).
	By Corollary \ref{cor:Lie algebra}(\ref{case:0-2}), $\fg$ is abelian and all elements $D$ satisfy $D^2 = 0$.
	In particular $\bar{Y}$ has no $p$-closed derivation quotient that is a classical Enriques surface.
\end{proof}

\subsection{Case of \texorpdfstring{$\bar{Y}$}{Y} having $A_1$} \label{subsec:proof:A}

\begin{proof}[Proof of Theorem \ref{thm:main3} in the case $\Sing(\bar{Y})$ is $8 A_1 + D_4^0$, $6 A_1 + D_6^0$, or $5 A_1 + E_7^0$]
Let $w_1$ be a singular point of type $A_1$ 
and $w_2$ a singular point not of type $A_1$.
Then the attached canonical lines $\fl(w_1)$ and $\fl(w_2)$ of $\fg = H^0(\bar{Y}, T_{\bar{Y}})$ are respectively of multiplicative type and additive type 
by Corollary \ref{cor:canonical line}.
The line generated by a fixed-point-free $p$-closed derivation (which exists by assumption) 
is different from $\fl(w_1)$ and $\fl(w_2)$ (Proposition \ref{prop:T free}(\ref{prop:T free:canonical lines})).
By Corollary \ref{cor:Lie algebra}(\ref{case:2-1}),
all lines of $\fg$ are $p$-closed, 
and among them exactly one is of additive type, which should be $\fl(w_2)$.
Hence all Enriques quotients of $\bar{Y}$ are classical.
The assertion on the bracket follows from Corollary \ref{cor:Lie algebra}(\ref{case:2-1}).
\end{proof}

\begin{proof}[Proof of Theorem \ref{thm:main3} in the case $\Sing(\bar{Y})$ is $12 A_1$]
If all $12$ canonical lines are equal, 
then a generator of the line extends to a derivation on the blow-up $Y$ of $\bar{Y}$ at the $12$ points,
but since $Y$ is a (smooth) K3 surface this is impossible by \cite{Rudakov--Shafarevich:inseparable}*{Theorem 7}.
Hence there are at least $2$ distinct canonical lines, both of multiplicative type by Corollary \ref{cor:canonical line}.

By applying Proposition \ref{prop:half fix} to the rational derivation on $Y$ induced by a fixed-point-free derivation $D$, 
where $Y \to \bar{Y}$ is the minimal resolution with exceptional curves $\set{e_w}_{w \in \Sing(\bar{Y})}$,
we see that 
$\sum_{w \in \Sing(\bar{Y})} e_w \in 2 \Pic(Y)$.
This induces, as in \cite{Matsumoto:k3alphap}*{Theorem 7.3(2)}, 
a $\mu_2$-covering $\bar{Z} \to \bar{Y}$ that is regular above a neighborhood of $\Sing(\bar{Y})$.
Let $D' \neq 0$ be the resulting $p$-closed derivation on $\bar{Y}$ 
(cf. \cite{Matsumoto:k3alphap}*{proof of Theorem 7.3(2)}).
Then $D'$ is fixed-point-free, since $\Fix(D')$ contains none of $\Sing(\bar{Y})$. 
Hence $\bar{Z}$ is an Enriques surface.
As in the previous subsection, it is supersingular.
Hence the line $[D'] \subset \fg$ is of additive type.

By Corollary \ref{cor:Lie algebra}(\ref{case:2-1}),
all lines of $\fg = H^0(\bar{Y}, T_{\bar{Y}})$ are $p$-closed and among them exactly one is of additive type, 
which is $[D']$, which is fixed-point-free. 
\end{proof}

\section{Examples} \label{sec:examples}

In this section we prove Theorem \ref{thm:main4}.
If $\bar{Y}$ contains a non-RDP singularity, then $\bar{Y}$ has an EDP by Theorem \ref{thm:classification of canonical covering}, and we proved in Theorem \ref{thm:main3} that $\bar{Y}$ has one EDP of type $\Ella$ and contains no other singularity.
If $\Sing(\bar{Y})$ consists only of RDPs, then the configuration is one of the $8$ given in Theorem \ref{thm:EHSB}.
Hence it remains to show that each of the $9$ configuration is indeed possible.
We will give explicit examples.

\subsection{Examples of canonical coverings that are RDP K3 surfaces} \label{sec:examples:RDP} 

It turns out that all configurations of RDPs 
are realized by 
Enriques surfaces admitting elliptic fibrations admitting a radical two-section (Definition \ref{def:radical2section}).

In each example, we give two elliptic RDP K3 surfaces $\bar{Y}' \to \bP^1$ and $\bar{Y}'' \to \bP^1$ satisfying the following properties.
\begin{itemize}
\item The generic fibers of $\bar{Y}'$ and $\bar{Y}''$ are isomorphic.
\item $\bar{Y}'$ is isomorphic to the Frobenius base change $\bar{J} \times_{\bP^1} {\bP^1}$ 
of the Weierstrass form $\bar{J} \to \bP^1$ of some rational elliptic surface $J \to \bP^1$.
\item We give a basis $D_1, D_2$ for $H^0(\bar{Y}'', T_{\bar{Y}''})$. 
A generic element $D = e_1 D_1 + e_2 D_2$ ($e_1, e_2 \in k$) has no fixed points,
hence the quotient $\bar{X} := (\bar{Y}'')^D$ is an RDP Enriques surface,
and $\bar{Y} := \bar{Y}'' \times_{\bar{X}} X \to X$ is the canonical covering of the Enriques surface $X$,
where $X \to \bar{X}$ is the minimal resolution.
\end{itemize}
We do not give $\bar{J}$ explicitly since it will be clear from the equation defining $\bar{Y}'$.
We will describe the type of particular fibers of $J$ according to Lang's classification \cite{Lang:configurations} (for short, we call it the Lang type).

\begin{example}[$12 A_1$, $8 A_1 + D_4^0$, $6 A_1 + D_6^0$, $5 A_1 + E_7^0$] \label{ex:12A1,AD}
The examples with $12 A_1$ ((\ref{case:12A1}) below) 
and $8 A_1 + D_4^0$ ((\ref{case:AD}), $n = 0$) 
are the ones given by Katsura--Kondo \cite{Katsura--Kondo:Enriques finite group}*{Section 3} and Kondo \cite{Kondo:coveredby1}*{Section 3.3} respectively.

	Let $A(t),B(t),C(t) \in k[t]$ be one of the following.
	\begin{enumerate}
		\item \label{case:12A1}  $(A,B,C) = (t^3 (t - 1), t^3 (t-1)^3, 0)$,
		\item \label{case:AD}    $(A,B,C) = (0, t^{3-n} (t-1)^3, n (t-1)^4 )$, $n \in \set{0,2,3}$.
	\end{enumerate}

We have equalities 
$d (A(t)B(t)) / d t = 0$ 
and $C(t) = d (t(t-1)B(t)) / d t$
in each case.

Let $\bar{Y}'$ be the elliptic RDP K3 surface defined by 
\begin{align*} \label{eq:exA:Y'}
y^2  + x y       + t(t-1) A(t) y         + x^3  + t(t-1) B(t) x         &= 0, \\
y'^2 + s^2 x' y' + (1-s) \tilde{A}(s) y' + x'^3 + (1-s) \tilde{B}(s) x' &= 0,
\end{align*}
where $s = t^{-1}$, $x' = t^{-4} x$, $y' = t^{-6} y$, and
\[
\tilde{A}(s) = s^4 B(s^{-1}), \; 
\tilde{B}(s) = s^6 B(s^{-1}), \; 
\tilde{C}(s) = s^4 C(s^{-1}).
\]

	The RDPs of $\bar{Y}'$ and the corresponding singular fibers of the minimal resolution $Y$ are 
\begin{itemize}
	\item[(\ref{case:12A1})] $2 A_9$ ($2 \rI_{10}$) at $t = 0,1$ and $2 A_1$ ($2 \rI_{2}$) at $t = \omega, \omega^2$,
	where $\omega$ and $\omega^2$ are the roots of $t^2 + t + 1 = 0$,
	\item[(\ref{case:AD})] $A_{7-2n}$ ($\rI_{8-2n}$) at $t = 0$, $A_{7}$ ($\rI_{8}$) at $t = 1$, 
	and $D_5^0$ or $D_7^0$ or $E_7^0$ ($\rI_1^*$ or $\rI_3^*$ or $\rIII^*$) at $s = 0$ if $n = 0$ or $n = 2$ or $n = 3$ respectively.
\end{itemize}

Let $\bar{Y}''$ be the elliptic RDP K3 surface which is birational to $\bar{Y}'$ and isomorphic outside the fibers $t = 0,1$,
defined by 
\begin{align*} 
y^2   + x y       + t(t-1) A(t) y         + x^3           + t (t-1) B(t) x        &= 0 \quad (t \neq 0,1), \\
y_1^2 + x_1 y_1   + A(t) y_1              + t (t-1) x_1^3 + B(t) x_1              &= 0, \\
y_2^2 + x_2 y_2   + A(t) x_2^2 y_2        + t (t-1) x_2   + B(t) x_2^3            &= 0, \\
y'^2  + s^2 x' y' + (1-s) \tilde{A}(s) y' + x'^3          + (1-s) \tilde{B}(s) x' &= 0 \quad (s \neq 1),
\end{align*}
where the coordinates are given by
\[
x_1 = \frac{x}{t(t-1)}, \; 
y_1 = \frac{y}{t(t-1)},
\qquad
x = t(t-1) x_1, \;
y = t(t-1) y_1,
\]
\[
x_2 = \frac{t(t-1)}{x}, \; 
y_2 = \frac{t(t-1) y}{x^2}, 
\qquad
x = \frac{t(t-1)}{x_2}, \;
y = \frac{t(t-1) y_2}{x_2^2}.
\]
The RDPs of $\bar{Y}''$ at the fibers $t = 0,1$ are 
\begin{enumerate}
	\item[(\ref{case:12A1})] $A_7 + A_7$ at $(x_1,y_1,t) = (0,0,0),(0,0,1)$ and
$A_1 + A_1$ at $(x_2,y_2,t) = (0,0,0),(0,0,1)$,
	\item[(\ref{case:AD})] 
$A_{5-2n}$ at $(x_1,y_1,t) = (0,0,0)$ (if $n = 0,2$),
$A_5$ at $(x_1,y_1,t) = (0,0,1)$, and
$A_1 + A_1$ at $(x_2,y_2,t) = (0,0,0),(0,0,1)$.
\end{enumerate}
The other fibers remain unchanged.

Let $D_1$ and $D_2$ be the derivations on $\bar{Y}''$ defined as follows,
where $A_t$, $B_t$, and $\tilde{B}_s$ are the derivatives.

\begin{tabular}{lll}
	\toprule
	$-$ & $D_1(-)$ & $D_2(-)$ \\
	\midrule
	$x$   & $0$           & $(t(t-1))^{-1} (x + t(t-1) A(t))$ \\
	$y$   & $t(t-1) C(t)$ & $(t(t-1))^{-1} (y + x^2 + t^2 (t-1)^2 B_t(t))$ \\
	$t$   & $t(t-1)$      & $1$ \\
	\midrule  
	$x_1$ & $x_1$        & $A_t(t)$ \\
	$y_1$ & $y_1 + C(t)$ & $x_1^2 + B_t(t)$ \\
	$t$   & $t(t-1)$     & $1$ \\
	\midrule  
	$x_2$ & $x_2$              & $A_t(t) x_2^2$ \\
	$y_2$ & $y_2 + C(t) x_2^2$ & $1 + B_t(t) x_2^2$ \\
	$t$   & $t(t-1)$           & $1$ \\
	\midrule  
	$x'$  & $0$                   & $(1-s)^{-1} s^2 x' + \tilde{A}(s)$ \\
	$y'$  & $(1-s) \tilde{C}(s) $ & $(1-s)^{-1} (s^2 y' + x'^2 + (1-s)^2 \tilde{B}_s(s))$ \\
	$s$   & $1-s$                 & $s^2$ \\
	\bottomrule
\end{tabular}

In case (\ref{case:12A1}) (resp.\ case (\ref{case:AD}) with $n = 0$), 
the derivations $D_{a,b}$ given by Katsura--Kondo \cite{Katsura--Kondo:Enriques finite group}*{Section 3} (resp.\ Kondo \cite{Kondo:coveredby1}*{Section 3.3}) 
are equal to $ab D_1 + D_2$ (resp.\ $D_1 + (ab)^{-1} D_2$).

Consider the derivation $D = e_1 D_1 + e_2 D_2$ ($e_1, e_2 \in k$).
We observe that $D^2 = e_1 D$ and that
if $(e_1, e_2)$ is generic
(that is, (\ref{case:12A1}) $e_1 - e_2 \neq 0$ and $e_2 \neq 0$, 
and (\ref{case:AD}) $e_1 \neq 0$ and $e_2 \neq 0$)
then $\Fix(D) = \emptyset$.
Therefore, for such $D$, $\bar{X} = \bar{Y}''^D$ is an RDP Enriques surface
with $A_3$, $A_2$, $A_1$, $A_1$ at the images of $A_7$, $A_5$, $D_5^0$, $D_7^0$ respectively
and no other RDPs.
It is supersingular if $e_1 = 0$ in case (\ref{case:12A1}), and classical in all other cases.
Let $X \to \bar{X}$ be the minimal resolution and let $\bar{Y} = \bar{Y}'' \times_{\bar{X}} X$.
Then $\bar{Y}$ is the canonical ($\mu_2$- or $\alpha_2$-) covering of the smooth Enriques surface $X$ with 
(\ref{case:12A1}) $\Sing(\bar{Y}) = 12 A_1$
(\ref{case:AD})   $\Sing(\bar{Y}) = 8 A_1 + D_4^0, 6 A_1 + D_6^0, 5 A_1 + E_7^0$ ($n = 0,2,3$)
respectively.

If $e_1 = 0$ in case (\ref{case:12A1}),
the multiple fiber of $X$ corresponds to the fiber $s = 0$ of $Y$,
which is a supersingular elliptic curve.
In all other cases,
the multiple fibers of $X$ correspond to the fibers $t = \beta_i$ of $Y$,
which are ordinary elliptic curves, 
where $\beta_1, \beta_2$ are the two (distinct) roots of $e_1 t(t-1) + e_2 = 0$
(equivalently, $\beta_1 + \beta_2 = 1$ and $\beta_1 \beta_2 = e_2/e_1$).

In case (\ref{case:AD}),
the singular fiber of additive type (at $s = 0$) of $J$ is of type $\rII$ 
and more precisely it is of Lang type 2A, 2B, 1C for $n = 0,2,3$ respectively.
\end{example}

\begin{example}[$3 D_4^0$, $D_4^0 + D_8^0$, $D_4^0 + E_8^0$] \label{ex:DD}
	Let $A(t),B(t),C(t),G(t) \in k[t]$ be one of the following.
	\begin{enumerate}
		\item \label{case:3D4}  $(A,B,C,G) = (t^2 + t + 1, t^2 + t + 1, t^2, 0)$,
		\item \label{case:D4D8} $(A,B,C,G) = (t + 1, (t + 1)^2, t^2, 0)$,
		\item \label{case:D4E8} $(A,B,C,G) = ((t + 1)^2, (t + 1)^2, (t + 1)^2, t + 1)$.
	\end{enumerate}
	Note that $t^2 C(t) = B(t)^2 + \evenpart{A}(t)$,
	where $\evenpart{A}(t)$ consists of the terms of $A(t)$ of even degree.
	Let $\bar{Y}'$ be the elliptic RDP K3 surface defined by 
	\begin{gather*}
	y^2 + t^2 B(t)^2 y + x^3 + t A(t) x^2 + t^{10} G(t)^2 = 0,
	\\
	y'^2 + \tilde{B}(s)^2 y' + x'^3 + s \tilde{A}(s) x'^2 + \tilde{G}(s)^2 = 0,
	\end{gather*}
	where $s = t^{-1}$, $x' = t^{-4} x$, $y' = t^{-6} y$,
	and \[ \tilde{A}(s) = s^2 A(s^{-1}), \; \tilde{B}(s) = s^2 B(s^{-1}), \; \tilde{C}(s) = s^2 C(s^{-1}), \; \tilde{G}(s) = s G(s^{-1}). \]
	The RDPs of $\bar{Y}'$ and the corresponding singular fibers of the minimal resolution $Y$ are 
	\begin{enumerate}
	\item[(\ref{case:3D4})]  $3 D_5^0$ ($3 \rI_1^*$) at $t = 0, \omega, \omega^2$,
	\item[(\ref{case:D4D8})] $D_5^0$ ($\rI_1^*$) at $t = 0$ and $D_9^0$ ($\rI_5^*$) at $t = 1$,
	\item[(\ref{case:D4E8})] $D_5^0$ ($\rI_1^*$) at $t = 0$ and $E_8^0$ ($\rII^*$) at $t = 1$.
	\end{enumerate}
	Here $\omega$ and $\omega^2$ are the roots of $t^2 + t + 1 = 0$.
	
	Let $\bar{Y}''$ be the elliptic RDP K3 surface which is birational to $\bar{Y}'$ and isomorphic outside the fiber $t = 0$,
	defined by 
\begin{align*} 
	y^2   + t^2 B(t)^2 y      + x^3     + t A(t) x^2          + t^{10} G(t)^2      &= 0 \quad (t \neq 0), \\
	y_0^2 + B(t)^2 x_0^2 y_0  + t^2 x_0 + t A(t) x_0^2        + t^{6} G(t)^2 x_0^4 &= 0, \\
	y'^2  + \tilde{B}(s)^2 y' + x'^3    + s \tilde{A}(s) x'^2 + \tilde{G}(s)^2     &= 0, 
\end{align*}
where the coordinates are given by
	\[
	x_0 = \frac{t^2}{x}, \; 
	y_0 = \frac{t^2 y}{x^2}, 
	\qquad
	x = \frac{t^2}{x_0}, \;
	y = \frac{t^2 y_0}{x_0^2}.
	\]
	Then $\bar{Y}''$ has $D_4^0$ 
	on the fiber $t = 0$, and the RDPs of $\bar{Y}'$ on the other fibers remain unchanged.
	
	Let $D_1$ and $D_2$ be the derivations on $\bar{Y}''$ defined as follows.
	
	\begin{tabular}{lll}
		\toprule
		$-$   & $D_1(-)$ & $D_2(-)$ \\
		\midrule
		$x$   & $t^2 \evenpart{A}(t)$   & $t^2 C(t)$ \\
		$y$   & $0$                     & $t^{-2} x^2$ \\
		$t$   & $t^2$                   & $1$ \\
		\midrule  
		$x_0$ & $\evenpart{A}(t) x_0^2$ & $x_0^2 C(t)$ \\
		$y_0$ & $0$                     & $1$ \\
		$t$   & $t^2$                   & $1$ \\
		\midrule  
		$x'$  & $\evenpart{\tilde{A}}(s)$ & $\tilde{C}(s)$ \\
		$y'$  & $0$                       & $x'^2$ \\
		$s$   & $1$                       & $s^2$ \\
		\bottomrule
	\end{tabular}
	
	Consider the derivation $D = e_1 D_1 + e_2 D_2$ ($e_1, e_2 \in k$).
	We observe that $D^2 = 0$ and that
	if $(e_1, e_2)$ is generic
	(that is, if $e_2 \neq 0$ and $B(\sqrt{e_2/e_1}) \neq 0$) 
	then $\Fix(D) = \emptyset$.
	Therefore, for such $D$, $\bar{X} = \bar{Y}''^D$ is a supersingular RDP Enriques surface
	with $A_1$ at the images of $D_5^0$ and $D_9^0$.
	Let $X \to \bar{X}$ be the minimal resolution and let $\bar{Y} = \bar{Y}'' \times_{\bar{X}} X$.
	Then $\bar{Y}$ is the canonical $\alpha_2$-covering of the smooth supersingular Enriques surface $X$ with
	(\ref{case:3D4})  $\Sing(\bar{Y}) = 3 D_4^0$,
	(\ref{case:D4D8}) $\Sing(\bar{Y}) = D_4^0 + D_8^0$,
	(\ref{case:D4E8}) $\Sing(\bar{Y}) = D_4^0 + E_8^0$.
	
	The multiple fiber of $X$ corresponds to the fiber $t = \sqrt{e_2/e_1}$ of $Y$,
	which is a supersingular elliptic curve.

	The singular fiber at $t = 0$ of $J$ is of type $\rIII$ and of Lang type 10A,
	and the remaining singular fibers are 
	(\ref{case:3D4})  both of type $\rIII$ and of Lang type 10A,
	(\ref{case:D4D8})      of type $\rIII$ and of Lang type 10B,
	(\ref{case:D4E8})      of type $\rII$  and of Lang type 9B.
\end{example}

\begin{example}[$D_{12}^0$] \label{ex:D12}
	Let $\bar{Y}'$ be the elliptic RDP K3 surface defined by 
	\begin{gather*} \label{eq:exD12:Y'}
	y^2 + t^6 y + x^3 + (t^2 + t^6) x + t^7 = 0, \\
	y'^2 + y' + x'^3 + (s^6  + s^2) x' + s^5 = 0,
	\end{gather*}
	where $s = t^{-1}$, $x' = t^{-4} x$, $y' = t^{-6} y$.
	The RDP of $\bar{Y}'$ and the corresponding singular fiber of the minimal resolution $Y$ are $D_{12}^0$ ($\rI_8^*$) at $t = 0$.
	
	Let $\bar{Y} = \bar{Y}''$ be the elliptic RDP K3 surface which is birational to $\bar{Y}'$ and isomorphic outside the fiber $t = 0$,
	defined by 
	\begin{align*} 
	y^2   + t^6 y         + x^3   + (t^2     + t^6) x    + t^7       &= 0 \quad (t \neq 0), \\
	y_0^2 + t^4 x_0^2 y_0 + x_0^3 + t^2 x_0  + t^4 x_0^3 + t^3 x_0^4 &= 0, \\
	y'^2  + y'            + x'^3  + (s^6     + s^2) x'   + s^5       &= 0, 
	\end{align*}
	where the coordinates are given by
	\[
	x_0 = \frac{t^2}{x}, \; 
	y_0 = \frac{t^2 y}{x^2}, 
	\qquad
	x = \frac{t^2}{x_0}, \;
	y = \frac{t^2 y_0}{x_0^2}.
	\]
	The RDP of $\bar{Y}$ is $D_{12}^0$ at $t = x_0 = y_0 = 0$.
	
	Let $D_1$ and $D_2$ be the derivations on $\bar{Y}$ defined as follows.

	\begin{tabular}{lll}
		\toprule
		$-$   & $D_1(-)$ & $D_2(-)$ \\
		\midrule
		$x$   & $0$      & $t^4$                \\
		$y$   & $t^2$    & $t^{-2} (x^2 + t^6)$ \\
		$t$   & $t^2$    & $1$                  \\
		\midrule  
		$x_0$ & $0$      & $t^2 x_0^2 $    \\
		$y_0$ & $x_0^2$  & $t^2 x_0^2 + 1$ \\
		$t$   & $t^2$    & $1$             \\
		\midrule  
		$x'$  & $0$      & $1$          \\
		$y'$  & $s^4$    & $s^2 + x'^2$ \\
		$s$   & $1$      & $s^2$        \\
		\bottomrule
	\end{tabular}
	
	Consider the derivation $D = e_1 D_1 + e_2 D_2$ ($e_1, e_2 \in k$).
	We observe that $D^2 = 0$ and that
	if $(e_1, e_2)$ is generic
	(that is, if $e_2 \neq 0$)
	then $\Fix(D) = \emptyset$.
	Therefore, for such $D$, $X = \bar{Y}^D$ is a supersingular smooth Enriques surface
	and $\bar{Y}$ is its canonical $\alpha_2$-covering with $\Sing(\bar{Y}) = D_{12}^0$.

	The multiple fiber of $X$ corresponds to the fiber $t = \sqrt{e_2/e_1}$ of $Y$,
	which is a supersingular elliptic curve.

	The singular fiber at $t = 0$ of $J$ is of type $\rII$ and of Lang type 9C.
\end{example}

\begin{example}[$D_4^0 + D_8^0$ on the same fiber] \label{ex:D4D8}
	Let $\bar{Y}'$ be the elliptic RDP K3 surface defined by 
	\begin{gather*} 
	y^2 + t^6 y + x^3 + t x^2 = 0,
	\\
	y'^2 + y' + x'^3 + s^3 x'^2 = 0,
	\end{gather*}
	where $s = t^{-1}$, $x' = t^{-4} x$, $y' = t^{-6} y$.
	The RDP of $\bar{Y}'$ and the corresponding singular fiber of the minimal resolution $Y$ are $D_{13}^0$ ($\rI_9^*$) at $t = 0$.

	Let $\bar{Y}''$ be the elliptic RDP K3 surface which is birational to $\bar{Y}'$ and isomorphic outside the fiber $t = 0$,
	defined by 
\begin{align*}
	y^2   + t^6 y         + x^3       + t x^2    &= 0 \quad (t \neq 0), \\
	y_0^2 + t^4 x_0^2 y_0 + t^2 x_0   + t x_0^2  &= 0, \\
	y_3^2 + t^4 y_3       + t^2 x_3^3 + t x_3^2  &= 0, \\
	y'^2  + y           ' + x'^3      + s^3 x'^2 &= 0,
\end{align*}
where the coordinates are given by
	\[
	x_0 = \frac{t^2}{x}, \; 
	y_0 = \frac{t^2 y}{x^2}, 
	\qquad
	x = \frac{t^2}{x_0}, \;
	y = \frac{t^2 y_0}{x_0^2},
	\]
	\[
	x_3 = \frac{x}{t^2}, \;
	y_3 = \frac{y}{t^2}.
	\]
	Then $\bar{Y}''$ has $D_4^0$ at $t = x_0 = y_0 = 0$ and $D_9^0$ at $t = x_3 = y_3 = 0$.
	
	Let $D_1$ and $D_2$ be the derivations on $\bar{Y}''$ defined as follows.

	\begin{tabular}{lll}
		\toprule
		$-$   & $D_1(-)$ & $D_2(-)$ \\
		\midrule
		$x$   & $t^2$   & $t^4$ \\
		$y$   & $0$                     & $t^{-2} x^2$ \\
		$t$   & $t^2$                   & $0$ \\
		\midrule  
		$x_0$ & $x_0^2$ & $t^2 x_0^2$ \\
		$y_0$ & $0$                     & $1$ \\
		$t$   & $t^2$                   & $0$ \\
		\midrule  
		$x_3$ & $1$ & $t^2$ \\
		$y_3$ & $0$ & $x_3^2$ \\
		$t$   & $t^2$ & $0$ \\
		\midrule
		$x'$  & $s^2$ & $1$ \\
		$y'$  & $0$                       & $x'^2$ \\
		$s$   & $1$                       & $0$ \\
		\bottomrule
	\end{tabular}
	
	Consider the derivation $D = e_1 D_1 + e_2 D_2$ ($e_1, e_2 \in k$).
	We observe that $D^2 = 0$ and that
	if $(e_1, e_2)$ is generic
	(that is, if $e_1 \neq 0$ and $e_2 \neq 0$) 
	then $\Fix(D) = \emptyset$.
	Therefore, for such $D$, $\bar{X} = \bar{Y}''^D$ is a supersingular RDP Enriques surface
	with $A_1$ at the image of $D_9^0$.
	Let $X \to \bar{X}$ be the minimal resolution and let $\bar{Y} = \bar{Y}'' \times_{\bar{X}} X$.
	Then $\bar{Y}$ is the canonical $\alpha_2$-covering of the smooth supersingular Enriques surface $X$ with
	$\Sing(\bar{Y}) = D_4^0 + D_8^0$.
	
	The multiple fiber of $X$ corresponds to the fiber $t = 0$ of $Y$.
	In this case this fiber does not move when $D$ vary.
	
	The singular fiber at $t = 0$ of $J$ is of type $\rIII$ and of Lang type 10C.

	We also note that in this example the natural morphism
	$H^0(\bar{Y}, T_{\bar{Y}}) \to H^0(\bP^1, T_{\bP^1})$ is not injective.
\end{example}

\subsection{An example of a canonical covering with an elliptic singularity}

\begin{example}[$\Ella$] \label{ex:Ella}
	This is the example the author gave in \cite{Matsumoto:k3alphap}*{Example 9.4}.

	Let $\bar{Y} \subset \bP^5$ be the intersection of three quadrics
	\begin{align*}
	x_1^2 + x_3^2 + y_1^2         + x_2 y_3 + x_3 y_2 &= 0, \\
	x_2^2         + y_1^2 + y_3^2 + x_1 y_3 + x_3 y_1 &= 0, \\
	y_2^2                         + x_1 y_2 + x_2 y_1 &= 0.
	\end{align*}
	Then it has single singularity at $(x_1, x_2, x_3, y_1, y_2, y_3) = (1, 0, 1, 0, 0, 0)$,
	which is an EDP singularity of type $\Ella$.
	Letting $s^{-1} = t := \frac{x_2}{y_2} = \frac{x_1 + y_2}{y_1}$,
	$\bar{Y}$ admits a structure of an elliptic surface (without assuming the existence of a section) over $\bP^1 = \Spec k[s] \cup \Spec k[t]$.
	It can be written as the intersection of two quadrics in a $\bP^3$-bundle over $\bP^1$ as follows:
	\begin{align*}
	(1 + s^2) x_1^2 + s^4 x_2^2 + x_3^2 + x_2 (s x_3 + y_3) &= 0, \\
	s^2 x_1^2 + (1 + s^4) x_2^2 + s^2 x_2 x_3 + x_1 (s x_3 + y_3) + y_3^2 &= 0 
	\end{align*}
	over $\Spec k[s]$, and 
	\begin{align*}
	(t^2 + 1) y_1^2 + y_2^2 + x_3^2 + x_3 y_2 + t y_2 y_3 &= 0, \\
	y_1^2 + t^2 y_2^2 + y_3^2 + x_3 y_1 + t y_1 y_3 + y_2 y_3 &= 0
	\end{align*}
	over $\Spec k[t]$, 
	glued by  
	\[
	y_1 = s (x_1 + y_2), \;
	y_2 = s x_2, 
	\qquad
	x_1 = t y_1 + y_2, \;
	x_2 = t y_2.
	\]
	The (EDP) singularity is at $s = 0$, $(x_1 : x_2 : x_3 : y_3) = (1 : 0 : 1 : 0)$.	
	
	Let $D_1$ and $D_2$ be the derivations on $\bar{Y}$ defined by 
	\[
	D_1(x_i) = 0, \;
	D_1(y_i) = x_i, 
	\qquad
	D_2(x_i) = y_i, \;
	D_2(y_i) = 0.
	\]
	(To be precise, we consider the derivations taking $\frac{y_j}{x_i}$ to $\frac{D_h(y_j)}{x_i} - y_j \frac{D_h(x_i)}{x_i^2}$, etc.)
	Under the elliptic surface coordinate these derivations are expressed as follows.
	
	\begin{tabular}{lll}
		\toprule
		$-$     & $D_1(-)$ & $D_2(-)$ \\
		\midrule
		$x_1$   & $0$      & $s x_1 + s^2 x_2$                \\
		$x_2$   & $0$      & $s x_2$                \\
		$x_3$   & $0$      & $y_3$                \\
		$y_3$   & $x_3$    & $0$                \\
		$s$     & $1$      & $s^2$                  \\
		\midrule  
		$y_1$   & $t y_1 + y_2$ & $0$ \\
		$y_2$   & $t y_2$       & $0$ \\
		$y_3$   & $x_3$         & $0$ \\
		$x_3$   & $0$           & $y_3$ \\
		$t$     & $t^2$         & $1$ \\
		\bottomrule
	\end{tabular}
	
	Consider the derivation $D = e_1 D_1 + e_2 D_2$ ($e_1, e_2 \in k$).
	We observe that $D^2 = 0$ and that
	if $(e_1, e_2)$ is generic
	(that is, if $e_1 \neq 0$)
	then $\Fix(D) = \emptyset$.
	For such $D$, $X = \bar{Y}^D$ is a supersingular smooth Enriques surface
	and $\bar{Y}$ is its canonical $\alpha_2$-covering with $\Sing(\bar{Y}) = \Ella$.
	
	The multiple fiber of $X$ corresponds to the fiber $t = \sqrt{e_2/e_1}$ of $Y$,
	which is a supersingular elliptic curve.
\end{example}

\subsection*{Acknowledgments}
I thank Hiroyuki Ito, Shigeyuki Kondo, and Stefan Schr\"oer
for helpful comments and discussions.

\begin{bibdiv}
	\begin{biblist}
		\bibselect{myrefs}
	\end{biblist}
\end{bibdiv}

\end{document}